\newtheorem{thm}{Theorem}[section]
\newtheorem{Lem}[thm]{Lemma}
\newtheorem{prop}[thm]{Proposition}
\newtheorem{rem}{Remark}
\theoremstyle{definition}
\newtheorem{defn}{Definition}[section]
\theoremstyle{remark}
\begin{document}

\title[ Period tripling and quintupling renormalizations below $C^2$ space]{Period tripling and quintupling renormalizations below $C^2$ space }

\author{Rohit Kumar$^1$, V.V.M.S. Chandramouli$^2$ }

\address{$^{1,2}$ Department of Mathematics, Indian Institute of Technology Jodhpur, \;\;\; Rajasthan, India-342037. }
\eads{ $^{1}$  \mailto kumar.30@iitj.ac.in, $^{2}$  \mailto chsarma@iitj.ac.in.}
\vspace{10pt}
\begin{indented}
\item[]October 2020
\end{indented}

\begin{abstract}
In this paper, we explore the period tripling and period quintupling renormalizations below $C^2$ class of unimodal maps.
We show that for a given proper scaling data there exists a renormalization fixed point on the space of piece-wise affine maps which are infinitely renormalizable. Furthermore, we show that this renormalization fixed point is extended to a $C^{1+Lip}$ unimodal map, considering the period tripling and period quintupling combinatorics. Moreover, we show that there exists a continuum of fixed points of renormalizations by considering a small variation on the scaling data. Finally, this leads to the fact that the tripling and quintupling renormalizations acting on the space of $C^{1+Lip}$ unimodal maps have unbounded topological entropy.
\end{abstract}

%
\vspace{2pc}
\noindent{\it Keywords}: Period tripling renormalization, period quintupling renormalization, fixed point of renormalization, unimodal maps, low smoothness.
%
%
%
%
\eqnobysec

\section{Introduction}

The concept of renormalization arises in many forms though Mathematics and Physics. Renormalization is a technique to describe the dynamics of a given system at a small spatial scale by an induced dynamical system in the same class. Period doubling renormalization operator was introduced by M. Feigenbaum \cite{Fe}, \cite{Fe2} and by P. Coullet and C. Tresser \cite{CT}, to study asymptotic small scale geometry of the attractor of one dimensional systems which are at the transition from simple to chaotic dynamics.

 The hyperbolicity of unique renormalization fixed point has been shown by O. Lanford \cite{LanF} for period doubling operator and later M. Lyubich \cite{LyuM} gave the generalization to the other sort of renormalizations, in the holomorphic context. Then it was shown by A. Davie \cite{Davie}, the renormalization fixed point is also hyperbolic in the space of $C^{2+\alpha}$ unimodal maps with $\alpha > 0.$ These results further extended by E. de Faria, W. de Melo and A. Pinto \cite{FMP} to a more general type of renormalization, using the results of M. Lyubich \cite{LyuM}. 
 Later, it was extended by Chandramouli, Martens, de Melo, Tresser \cite{CMMT}, to a new class denoted by $C^{2+|\cdot|}$ which is bigger than $C^{2+\alpha}$ (for any positive $ \alpha \leq 1$), in which the period doubling renormalization converges to the analytic  generic fixed point proving it to be globally unique. Furthermore it was shown that below $C^2,$ uniqueness is lost and period doubling renormalization operator has infinite topological entropy. 

One dimensional systems which arise from the real time applications are usually not smooth. In dissipative systems the states are groups in so-called stable manifolds,  different states in such a stable manifold have the same future. The packing of the stable manifold usually does not occur in a smooth way. For example, the Lorenz flow is a flow on three dimensional space approximating a convection problem in fluid dynamics. The stable manifolds are two dimensional surfaces packed in a non smooth foliation. This flow can be understood by a map on the interval whose smoothness is usually below $C^2$.

 In this work, we describe the construction of renormalization fixed point below $C^2$ space by considering period tripling and quintupling combinatorics. In the case of period tripling renormalization, for a given proper scaling data, we construct a nested sequence of affine pieces whose end-points lie on the unimodal map  and shrinking down to the critical point. Also, we prove that the period tripling renormalization operator defined on the space of piece-wise affine infinitely renormalizable maps, has a fixed point, denoted by $f_{s^*},$ corresponding to a given proper scaling data. In the next subsection~\ref{extsn}, we describe the extension of this renormalization fixed point $f_{s^*}$ to a $C^{1+Lip}$ unimodal map. Furthermore, in subsection~\ref{entropy}, we show that the topological entropy of renormalization defined on the space of $C^{1+Lip}$ unimodal maps has unbounded entropy. In subsection~\ref{evar}, we consider an $\epsilon-$variation on the scaling data and describe the existence of continuum of renormalization fixed points. In section ~\ref{sec3}, we describe the construction of period quintupling renormalization fixed point $g_{s^*}$ and its extension to $C^{1+Lip}$ space. Finally, we show that the geometry of the invariant Cantor set of the map $g_{s^*}$ is more complex than the geometry of the invariant Cantor set of $f_{s^*}$. \\
\noindent We recall some basic definitions.  Let $I = [a,b]$  be a closed interval. \\

\ A map $f : I  \rightarrow I$ , is a $C ^1$ map  with unique critical point $c \in I$, is called \textit{unimodal} map. \\

\ Let $c$ be a critical point of a unimodal map $f$ has a \textit{quadratic tip} if there exists a sequence $\{y_n\}$ approaches to $c$ and a constant $l > 0$ such that  
 $$\lim_{n\to\infty} \frac{f(y_n)-f(c)}{(y_n-c)^2}  = -l.$$      
   
 A unimodal map $f $ is called \textit{renormalizable} if there exists a proper subinterval $J$ of $I$ and a positive integer $n$ such that \\
(1)\;  $f^i(J), \; i = 0,1,....., n-1$ have no pairwise interior intersection, \\
(2)\;  $f^n(J) \subset J.$ \\ 
\noindent Then $f^n: J \rightarrow J$ is called a \textit{renormalization} of $f.$ \\ 

  A map $f: I \rightarrow I$ is \textit{infinitely renormalizable} map if there exists an  infinite sequence $\{I_n\}_{n = 0}^\infty$ of nested intervals and an infinite sequence $\{k(n)\}_{n = 0}^\infty$ of positive integers such that ${f^{k(n)}|_{I_n}} : I_n \rightarrow I_n$ are renormalizations of $f$ and the length of $I_n$ tends to zero as $n \rightarrow \infty.$ \\

Let $U$ be the set of unimodal maps and $U_0 \subset U $ contains the set of period doubling renormalizable unimodal maps. Let $f  \in U_0.$ Then, the period doubling renormalization operator $$R : U_0 \rightarrow U$$ is defined by 
$$Rf(x) = h^{-1} \circ f^2 \circ h(x),$$ where $h:[0,1] \rightarrow J$ is the orientation reversing affine homeomorphism.   
The map $Rf$ is again a unimodal map. The set of \textit{period doubling infinitely renormalizable} maps is denoted by 
$$W = \bigcap\limits_{n \geq 1} R^{-n}(U_0).  $$

\noindent In the next section, we construct a fixed point of period tripling renormalization.

\section{Period tripling renormalization} 

\subsection{Piece-wise affine renormalizable maps}\label{p2}


 Let us consider a family of maps $\mathcal{U}_c : [0,1] \rightarrow [0,1] $ defined by $$ \mathcal{U}_c(x) = 1-\Big|\frac{x-c}{1-c}\Big|^{\alpha} .$$ Where $\alpha > 1$ is the critical exponent and $c \in [0,\frac{1}{2}]$ is the critical point.
\\ 
\noindent In particular, we consider $\alpha =2$ and the subclass of unimodal maps $ \mathcal{U}_c(x)$ is denoted by $u_c(x).$\\ 

\noindent  Define an open set $$ T_k  = \left\{ (s_1, s_2, s_3, ....., s_k) \in \mathbb{R}^k : s_1,s_2,s_3, .....,s_k >0, \; \sum\limits_{i = 1}^k s_i < 1 \right\}, \; k \in \mathbb{N}.$$ For $k = 3,$ each element $ (s_1, s_2, s_3) $ of $ T_3 $ is called a scaling tri-factor. \\
 Define affine maps $\tilde{s}_1,$ $\tilde{s}_2$ and $\tilde{s}_3$ induced by a scaling tri-factor,  
\begin{eqnarray*}
\tilde{s}_1  : [0,1]\rightarrow [0,1]   \\
\tilde{s}_2  : [0,1]\rightarrow [0,1]   \\
\tilde{s}_3  : [0,1]\rightarrow [0,1]   
\end{eqnarray*}
 defined by 
\begin{eqnarray*}
 \tilde{s}_1(t) &=  s_1(1-t) \\ 
 \tilde{s}_2(t) &=  u_c(0)+ s_2(1-t) \\
 \tilde{s}_3(t) &= 1-s_3(1-t).
\end{eqnarray*}

\ A function $ s : \mathbb{N} \rightarrow T_k $ is called a   scaling data. For $k = 3,$ for all $ n \in \mathbb{N}$, the scaling tri-factor $ s(n) = (s_1(n),s_2(n), s_3(n)) \in T_3 $ induces a triplet of affine maps $ (\tilde{s}_1(n) , \tilde{s}_2(n) , \tilde{s}_3(n))$. Now, for each $n \in \mathbb{N},$ we define the following intervals:
\begin{eqnarray*}
I_1^n = \tilde{s}_2(1)\circ \tilde{s}_2(2)\circ \tilde{s}_2(3) \circ.....\circ \tilde{s}_2(n-1)\circ \tilde{s}_1(n)([0,1]), \\
I_2^n = \tilde{s}_2(1)\circ \tilde{s}_2(2)\circ \tilde{s}_2(3) \circ.....\circ \tilde{s}_2(n-1)\circ \tilde{s}_2(n)([0,1]), \\ 
I_3^n = \tilde{s}_2(1)\circ \tilde{s}_2(2)\circ \tilde{s}_2(3) \circ.....\circ \tilde{s}_2(n-1)\circ \tilde{s}_3(n)([0,1]).
\end{eqnarray*}

\begin{defn}
\ A scaling data is said to be proper if $$ d(s(n),\partial T_k) \geq \epsilon, \;\;\;\;\;   \textrm{for some} \;\;\; \epsilon > 0.$$ Also, we have $$ |I_j^n| \leq (1-\epsilon)^n,  \;\;\; \textrm{for} \;\;\;  n \geq 1 \;\;\; \textrm{and} \;\;\; j = 1,2,3. $$
\end{defn}

\noindent Given a proper scaling data, define $$\{c\} = \bigcap\limits_{n \geq 1} I_2^n. $$

\noindent  A proper scaling data $ s : \mathbb{N} \rightarrow T_3 $ induces the set $ D_s  = \bigcup\limits_{n \geq 1} (I_1^n \cup I_3^n) .$   Consider a map $$ f_s : D_s \rightarrow [0,1] $$ such that $f_s|_{I_1^n}$ and $f_s|_{I_3^n}$ are the affine extensions of  $u_c|_{\partial I_1^n}$ and $u_c|_{\partial I_3^n}$ respectively. 

\FloatBarrier
\begin{figure}[h]
\centering
{\includegraphics [width=86mm]{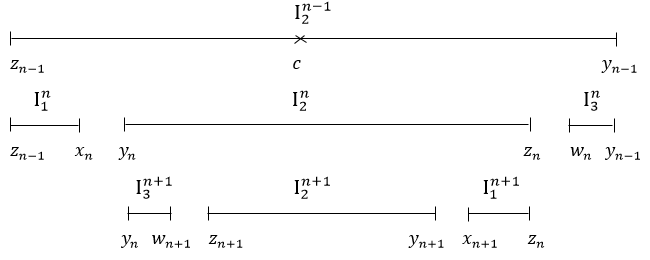}}
\caption{Intervals of next generations}
\label{fig:nexgen}
\end{figure} 
\FloatBarrier

\noindent From Figure~\ref{fig:nexgen}, the end points of the intervals at each level are labeled by
 
\noindent  $$z_0 = 0,\; y_{0} = 1,\; I_2^0 = [0,1] $$ and for $ n \geq 1$ 
\begin{center}
$ x_n =  \partial I_1^n \backslash \partial I_2^{n-1} $\\
$ y_{2n-1} = min\{ \partial I_2^{2n-1} \} $\\
$ y_{2n} = max\{ \partial I_2^{2n} \} $\\
$ z_{2n-1} = max\{ \partial I_2^{2n-1} \} $\\
$ z_{2n} = min\{ \partial I_2^{2n} \} $\\
$ w_n =  \partial I_3^n \backslash \partial I_2^{n-1}. $
\end{center}

\begin{defn}
\ For a given proper scaling data $s : \mathbb{N} \rightarrow T_3, $ a map $f_s$ is said to be \textit{period tripling infinitely renormalizable} if for $n \geq 1, $ 
\begin{itemize}
\item[(i)]  $[f_s(y_{n}),1]$ is the maximal domain containing $1$ on which $f_s^{3^n-1}$ is defined affinely and $[0, f_s^2(y_{n})]$ is the maximal domain containing $0$ on which $f_s^{3^n-2}$ is defined affinely, 
\item[(ii)] $f_s^{3^n-1}([f_s(y_{n}),1]) = I_2^n,$ \\ $f_s^{3^n-2}([0, f_s^2(y_{n})]) = I_2^n.$
\end{itemize}
\end{defn}

\noindent The combinatorics for period  tripling renormalization is shown in Figure \ref{fig:ptr}.

\begin{figure}[!htb]
\centering
{\includegraphics [width=95mm]{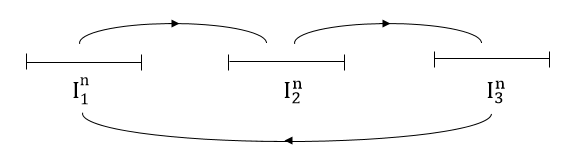}}
\caption{Period triple interval combinatorics ($ I_2^n \rightarrow I_3^n \rightarrow I_1^n \rightarrow I_2^n).$ }
 \label{fig:ptr}
\end{figure} 

\noindent Let $U_\infty$ be the collection of period tripling infinitely renormalizable maps. \\
 \noindent Let $f_s \in U_\infty$ be given by the proper scaling data $s : \mathbb{N} \rightarrow T_3 $ and
  define $$ {\tilde{I}}_2^n  = [0, min\{u_c^{-1}( z_{n})\}] = [0, min\{f_s^{-1}( z_{n})\}] ,$$  
  where $ u_c^{-1}(x)$ denotes the preimage(s) of $x$ under $u_c$ and $$ {\hat{I}}_2^n  = [u_c(y_{n}),1] = [f_s(y_{n}),1].$$ 
   Let $$ h_{s,n} : [0,1] \rightarrow [0,1] $$ be defined by $$h_{s,n} = \tilde{s}_2(1)\circ \tilde{s}_2(2) \circ \tilde{s}_2(3)\circ..... \circ \tilde{s}_2(n). $$
Furthermore, let $${\tilde{h}}_{s,n} : [0,1] \rightarrow {\tilde{I}}_2^n \;\; \textrm{and} \;\; {\hat{h}}_{s,n} : [0,1] \rightarrow {\hat{I}}_2^n $$ be the affine orientation preserving homeomorphisms. 

\noindent Then define $$R_nf_s: h_{s,n}^{-1}(D_s) \rightarrow [0,1]$$  by $$R_nf_s(x) =  \left\{\begin{array}{ll}
R_n^{-}f_s(x), \;\;\; \;  \; \textrm{if} \; x \in h_{s,n}^{-1}(I_1^n) \\ R_n^{+}f_s(x), \;\;\;  \;  \; \textrm{if} \; x \in h_{s,n}^{-1}(I_3^n)
\end{array}
\right.$$ \\
where, $$R_n^{-}f_s: h_{s,n}^{-1}(\mathop{\cup}\limits_{n \geq 1}I_1^n) \rightarrow [0,1] $$\;\; and $$ R_n^{+}f_s: h_{s,n}^{-1}(\mathop{\cup}\limits_{n \geq 1}I_3^n) \rightarrow [0,1]$$ are defined by $$R_n^{-}f_s(x) = {\tilde{h}}_{s,n}^{-1} \circ f_s^{-1} \circ h_{s,n}(x)$$ $$R_n^{+}f_s(x) = {\hat{h}}_{s,n}^{-1} \circ f_s \circ h_{s,n}(x), $$ which are illustrated in Figure~\ref{fig:renomop}.

\begin{figure}[!htb]
\centering
{\includegraphics [width=120mm]{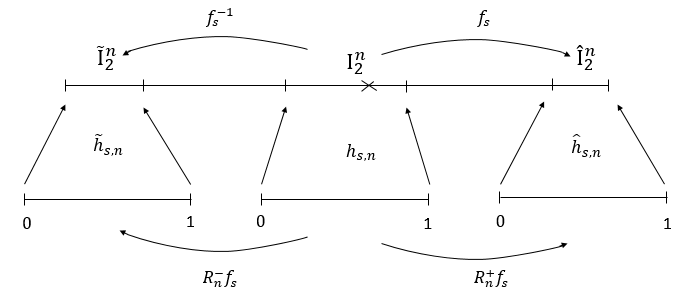}}
\caption{ }
 \label{fig:renomop}
\end{figure}

\noindent Let $\sigma: T_k^{\mathbb{N}} \rightarrow T_k^{\mathbb{N}}$ be the shift map $ \sigma(s)(n) = s(n+1). $  The above construction gives us the following result,

\begin{Lem}\label{lem1}
\ Let $s : \mathbb{N} \rightarrow T_3$ be proper scaling data such that $f_s$ is infinitely renormalizable. Then $$R_n{f_s} = f_{\sigma^{n}(s)}. $$
\end{Lem}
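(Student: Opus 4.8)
The plan is to prove the identity by checking that the two piecewise-affine maps $R_n f_s$ and $f_{\sigma^n(s)}$ have the same domain and then coincide on every affine piece. Throughout, write $s^{(n)}=\sigma^n(s)$, so that $s^{(n)}(m)=s(n+m)$, and let $I_j^m(s^{(n)})$ and $D_{s^{(n)}}$ denote the intervals and the domain attached to $f_{s^{(n)}}$.

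First I would identify the domains. Since $h_{s,n}=\tilde{s}_2(1)\circ\cdots\circ\tilde{s}_2(n)$ is an affine homeomorphism of $[0,1]$ onto $I_2^n$, and the pieces of $D_s$ contained in $I_2^n$ are exactly the $I_1^{n+k}$ and $I_3^{n+k}$ with $k\geq 1$, the domain $h_{s,n}^{-1}(D_s)$ of $R_n f_s$ equals $\bigcup_{k\geq1}\big(h_{s,n}^{-1}(I_1^{n+k})\cup h_{s,n}^{-1}(I_3^{n+k})\big)$. Pulling the defining compositions back through $h_{s,n}^{-1}=\tilde{s}_2(n)^{-1}\circ\cdots\circ\tilde{s}_2(1)^{-1}$ telescopes the first $n$ factors, leaving
\[
h_{s,n}^{-1}(I_1^{n+k})=\tilde{s}_2(n+1)\circ\cdots\circ\tilde{s}_2(n+k-1)\circ\tilde{s}_1(n+k)([0,1]),
\]
and similarly for $I_3^{n+k}$. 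I would then match these with $I_1^k(s^{(n)})$ and $I_3^k(s^{(n)})$, which requires verifying that the level-$(n+j)$ scaling maps of $s$ pulled back by $h_{s,n}$ are precisely the level-$j$ scaling maps of $s^{(n)}$; in particular that the critical point rescales as $h_{s,n}^{-1}(c)=\bigcap_k I_2^k(s^{(n)})$, so that the constant $u_c(0)$ entering $\tilde{s}_2$ is reproduced correctly at the new scale. Granting this, the domain of $R_n f_s$ is $D_{s^{(n)}}$.

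With the domains matched, on each piece both maps are affine: $f_{s^{(n)}}$ by construction, and $R_n^{-}f_s=\tilde{h}_{s,n}^{-1}\circ f_s^{-1}\circ h_{s,n}$, $R_n^{+}f_s=\hat{h}_{s,n}^{-1}\circ f_s\circ h_{s,n}$, since $f_s$ is affine on every piece of $D_s$ (so the relevant branch of $f_s^{-1}$ is affine as well) and the $h$'s are affine. An affine map is pinned down by two values, so it suffices to compare the two maps at the endpoints of each piece. I would evaluate $R_n^{\pm}f_s$ at the pulled-back endpoints $x_{n+k},y_{n+k},z_{n+k},w_{n+k}$, using that $f_s$ agrees with $u_c$ on $\partial I_1^{m}\cup\partial I_3^{m}$ and that the target intervals $\tilde{I}_2^n=[0,\min\{u_c^{-1}(z_n)\}]$ and $\hat{I}_2^n=[u_c(y_n),1]$ of $\tilde{h}_{s,n},\hat{h}_{s,n}$ were chosen exactly so that these values land back on the boundary data of $I_j^k(s^{(n)})$. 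Here the min/max labelling of $y_n,z_n$ records the parity of the orientation of $h_{s,n}$ (each $\tilde{s}_2$ is orientation reversing), which must be tracked to select the correct branch.

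The step I expect to be the main obstacle is the reconciliation of the quadratic-tip data under rescaling: showing that the branch of $f_s$ selected by $R_n^{\pm}$, once conjugated by the affine maps $h_{s,n},\tilde{h}_{s,n},\hat{h}_{s,n}$, reproduces the affine interpolation of $u_{c'}$ (with $c'=h_{s,n}^{-1}(c)$) that defines $f_{s^{(n)}}$, even though $u_c$ is genuinely quadratic and conjugation by affine maps does not preserve the quadratic family. This is exactly where the self-referential role of $u_c(0)$ in $\tilde{s}_2$ and the renormalizability conditions (i)--(ii) — which guarantee that $f_s^{3^n-1}$ and $f_s^{3^n-2}$ act affinely on $[f_s(y_n),1]$ and $[0,f_s^2(y_n)]$ onto $I_2^n$ — must be combined. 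Once the endpoint values are shown to agree, the affine pieces coincide and $R_n f_s=f_{\sigma^n(s)}$ follows. As a streamlining, I would first prove the base case $R_1 f_s=f_{\sigma(s)}$ and then induct on $n$, using $h_{s,n}=\tilde{s}_2(1)\circ h_{\sigma(s),n-1}$ to reduce level $n$ for $f_s$ to level $n-1$ for $f_{\sigma(s)}$.
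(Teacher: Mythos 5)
Your skeleton --- telescope $h_{s,n}^{-1}$ through the defining compositions to identify the domains, then compare the affine branches at the endpoints of the pieces --- is the right one, and it supplies detail the paper itself omits (Lemma~\ref{lem1} is asserted there as immediate from the construction, with no proof given). The genuine gap is the step you explicitly grant: that ``the constant $u_c(0)$ entering $\tilde{s}_2$ is reproduced correctly at the new scale.'' This is not a routine verification; it carries essentially the whole content of the paper's analysis. Concretely, telescoping gives $h_{s,1}^{-1}(I_2^{2})=\tilde{s}_2(2)([0,1])=[u_c(0),\,u_c(0)+s_2(2)]$, whereas the central interval defining $f_{\sigma(s)}$ is $I_2^1(\sigma(s))=[u_{c_1}(0),\,u_{c_1}(0)+s_2(2)]$, with $c_1$ the critical point of $f_{\sigma(s)}$; since $c\mapsto u_c(0)$ is injective on $[0,\frac12]$, the two domains agree only if $c_1=c$. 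On the other hand, the matching you are after also identifies $c_1$ with $h_{s,1}^{-1}(c)$, which by the paper's Eqns.~(\ref{eq01})--(\ref{eq04}) equals $\mathcal{R}(c)$. So what you ``grant'' is precisely $\mathcal{R}(c)=c$, i.e.\ that $s$ is already the fixed-point data $s^{*}$ --- which is the substance of Proposition~\ref{thm1} and the reason $U_\infty=\{f_{s^*}\}$. A self-contained proof of the lemma must therefore extract this consistency from Definition 2.2 itself; for instance, conditions (i)--(ii) at level $n=2$, transported by $h_{s,1}$, say that the rescaled return map carries $[0,s_1(2)]$ onto $h_{s,1}^{-1}(I_2^2)$, and since that map interpolates $u_{\gamma}$ with $\gamma=h_{s,1}^{-1}(c)$ (see below), this forces $u_{\gamma}(0)=u_c(0)$ and hence $\gamma=c$. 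Note also that your proposed induction via $h_{s,n}=\tilde{s}_2(1)\circ h_{\sigma(s),n-1}$ presupposes exactly the same consistency (the level-$(j+1)$ map of $s$ and the level-$j$ map of $\sigma(s)$ must share the constant $u_c(0)$), so it cannot be used to bootstrap around the problem.

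By contrast, the point you single out as the expected main obstacle --- that affine conjugation does not preserve the quadratic family --- is the part that does yield to direct computation, and you should carry it out rather than flag it. Let $A_1=f_s^{2}|_{\hat{I}_2^1}:\hat{I}_2^1\rightarrow I_2^1$ be the affine homeomorphism provided by conditions (i)--(ii), so that $A_1(1)=f_s^2(1)=u_c(0)$ and $A_1(u_c(y_1))=u_c(0)+s_2(1)$, where $y_1=u_c(0)$. For the normalization $u_c(x)=1-\left(\frac{x-c}{1-c}\right)^{2}$ one gets, using $1-u_c(y)=\left(\frac{y-c}{1-c}\right)^{2}$, that $h_{s,1}^{-1}\circ A_1\circ u_c\circ h_{s,1}(x)=1-\frac{\left(h_{s,1}(x)-c\right)^{2}}{\left(u_c(0)-c\right)^{2}}$; since $h_{s,1}(x)-c=-s_2(1)\,(x-\gamma)$ and $1-\gamma=\frac{c-u_c(0)}{s_2(1)}$ for $\gamma=h_{s,1}^{-1}(c)$, this collapses to $1-\left(\frac{x-\gamma}{1-\gamma}\right)^{2}=u_{\gamma}(x)$. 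Thus the quadratic endpoint data closes up exactly inside the family $\{u_c\}$ under rescaling; the irreducible difficulty of the lemma is the positioning of the central intervals discussed above, not the quadratic reconciliation.
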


\noindent Let $f_s$ be infinitely renormalization, then for $n \geq 0,$ we have $$ f_s^{3^n}: D_s \cap I_2^n \rightarrow I_2^n $$ is well defined. \\
\ Now we define the renormalization $R : U_\infty \rightarrow U_\infty$ as
$$ Rf_s = h_{s,1}^{-1} \circ f_s^3 \circ h_{s,1} .$$

\noindent The maps $f_s^{3^n-2} : {\tilde{I}}_2^n \rightarrow I_2^n $ and $f_s^{3^n-1} : {\hat{I}}_2^n \rightarrow I_2^n $ are the affine homeomorphisms whenever $f_s \in U_\infty$. Then we have

\begin{Lem}\label{lem2}
\ $R^n{f_s} : D_{\sigma^{n}(s)} \rightarrow [0,1]$ and $R^n{f_s} = R_n{f_s}.$
\end{Lem}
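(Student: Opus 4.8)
The plan is to establish $R^n f_s = R_n f_s$ by induction on $n$, using Lemma~\ref{lem1} as a dictionary between the dynamically defined operator $R$ and the combinatorially defined operators $R_n$. Since Lemma~\ref{lem1} already identifies $R_n f_s$ with $f_{\sigma^n(s)}$, whose domain is $D_{\sigma^n(s)}$ by construction, the asserted domain of $R^n f_s$ comes for free once the equality is proved; so the entire content is the equality itself, and its crux is the base case $n=1$.

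For the base case I would compute $R f_s = h_{s,1}^{-1}\circ f_s^3\circ h_{s,1}$ directly on $D_s\cap I_2^1$, splitting this set along the period-tripling combinatorics $I_2^1\to I_3^1\to I_1^1\to I_2^1$ into the two families of pieces $h_{s,1}^{-1}(I_3^m)$ and $h_{s,1}^{-1}(I_1^m)$, on which $R_1 f_s$ is given by $R_1^+$ and $R_1^-$ respectively. The model computation is the $I_3$-branch: a point $y=h_{s,1}(x)$ is sent by a single application of $f_s$ into $\hat I_2^1=[f_s(y_1),1]$, after which the two remaining iterates act as the affine homeomorphism $f_s^{3^1-1}\colon \hat I_2^1\to I_2^1$, so that $f_s^3 = f_s^{3^1-1}|_{\hat I_2^1}\circ f_s$ on this branch and hence
$$R f_s = \bigl(h_{s,1}^{-1}\circ f_s^{3^1-1}|_{\hat I_2^1}\circ \hat h_{s,1}\bigr)\circ\bigl(\hat h_{s,1}^{-1}\circ f_s\circ h_{s,1}\bigr) = \bigl(h_{s,1}^{-1}\circ f_s^{3^1-1}|_{\hat I_2^1}\circ \hat h_{s,1}\bigr)\circ R_1^+ f_s .$$
Both $f_s^{3^1-1}|_{\hat I_2^1}$ and $h_{s,1}\circ\hat h_{s,1}^{-1}$ are affine homeomorphisms $\hat I_2^1\to I_2^1$, and since an affine homeomorphism between two given intervals is determined by its orientation, they coincide; thus the parenthesised factor is the identity and $R f_s = R_1^+ f_s$ on the $I_3$-branch. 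On the $I_1$-branch I would run the dual argument: the terminal iterate of $f_s^3$ realises the affine homeomorphism $f_s^{3^1-2}\colon\tilde I_2^1\to I_2^1$, the orientation-preserving rescaling $\tilde h_{s,1}$ is built precisely so that $h_{s,1}^{-1}\circ f_s^{3^1-2}|_{\tilde I_2^1}\circ\tilde h_{s,1}=\mathrm{id}$, and matching orientations collapses the residual affine factor to leave $R f_s = R_1^- f_s$. Combining the two branches gives $R f_s = R_1 f_s$.

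For the inductive step, suppose $R^{n-1}f_s = R_{n-1}f_s$. By Lemma~\ref{lem1} this map equals $f_{\sigma^{n-1}(s)}$; the shifted data $\sigma^{n-1}(s)$ is again proper because $d(\sigma^{n-1}(s)(k),\partial T_3)=d(s(n-1+k),\partial T_3)\ge\epsilon$, and $f_{\sigma^{n-1}(s)}$ remains infinitely renormalizable, so $R^{n-1}f_s\in U_\infty$ and $R$ may legitimately be applied to it. Applying the base case to the map $f_{\sigma^{n-1}(s)}$ and then Lemma~\ref{lem1} twice,
$$R^n f_s = R\bigl(f_{\sigma^{n-1}(s)}\bigr) = R_1 f_{\sigma^{n-1}(s)} = f_{\sigma(\sigma^{n-1}(s))} = f_{\sigma^n(s)} = R_n f_s,$$
which also yields the domain $D_{\sigma^n(s)}$ and closes the induction.

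I expect the only real difficulty to be the base case, and within it the $I_1$-branch. The $I_3$-branch is transparent because there $f_s^3$ factors as (affine return map) $\circ\, f_s$, matching the single forward iterate in $R_1^+$. The $I_1$-branch requires checking that the successive affine pieces through which $f_s^3$ passes assemble into $f_s^{3^1-2}|_{\tilde I_2^1}$ composed with exactly the step that the inverse branch $f_s^{-1}$ in $R_1^-$ records, and tracking the orientations of $h_{s,1}$ and $\tilde h_{s,1}$ carefully enough to see that the leftover affine conjugacy is the identity. The uniqueness of an affine homeomorphism between two fixed intervals with prescribed orientation is the lever that finally forces agreement; verifying that the hypotheses of that uniqueness hold on the $I_1$-branch, in particular that $f_s^{-1}$ is read as the branch valued in $\tilde I_2^1$, is the delicate point.
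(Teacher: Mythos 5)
The paper states this lemma without proof (it is presented as an immediate consequence of the construction and of the remark that $f_s^{3^n-2}\colon\tilde I_2^n\to I_2^n$ and $f_s^{3^n-1}\colon\hat I_2^n\to I_2^n$ are affine homeomorphisms), so the benchmark is the intended argument: factor the return map through these affine homeomorphisms and induct via Lemma~\ref{lem1}. Your overall architecture (base case $n=1$, then induction using properness of $\sigma^{n-1}(s)$ and Lemma~\ref{lem1}) is exactly that, and your $I_3$-branch is correct: $h_{s,1}=\tilde s_2(1)$ reverses orientation and $f_s^{3^1-1}|_{\hat I_2^1}=f_s^2$ also reverses it (it sends $1\mapsto f_s(0)=u_c(0)=y_1$), so $h_{s,1}^{-1}\circ f_s^{2}|_{\hat I_2^1}\circ\hat h_{s,1}$ is an orientation-preserving affine bijection of $[0,1]$, i.e.\ the identity --- though note you asserted rather than checked this orientation match.

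The $I_1$-branch, however, contains a genuine error, exactly at the point you flagged as delicate. First, the identity you lean on, $h_{s,1}^{-1}\circ f_s^{3^1-2}|_{\tilde I_2^1}\circ\tilde h_{s,1}=\mathrm{id}$, is false: $f_s^{3^1-2}|_{\tilde I_2^1}=f_s|_{I_1^1}$ preserves orientation ($0\mapsto u_c(0)=y_1$, $s_1(1)\mapsto u_c^4(0)=z_1$) and $\tilde h_{s,1}$ preserves orientation, while $h_{s,1}$ reverses it; the composition is therefore the flip $t\mapsto 1-t$, not the identity. Second, and more structurally, the assembly you hope for cannot exist: if $f_s^3$ on the $I_1$-pieces factored as $f_s^{3^1-2}|_{\tilde I_2^1}$ composed with ``the step recorded by the inverse branch $f_s^{-1}$'', it would read $f_s^3=f_s|_{\tilde I_2^1}\circ(f_s|_{\tilde I_2^1})^{-1}=\mathrm{id}$, which is absurd. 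The correct decomposition on the $I_1$-pieces is the \emph{same} one you used on the $I_3$-pieces: every $y\in D_s\cap I_2^1$, whichever type of piece it lies in, satisfies $f_s(y)\in\hat I_2^1$, so $f_s^3=f_s^{3^1-1}|_{\hat I_2^1}\circ f_s$ on all of $D_s\cap I_2^1$ and hence $Rf_s=\hat h_{s,1}^{-1}\circ f_s\circ h_{s,1}$ everywhere. What then still requires proof is that this agrees on the $I_1$-pieces with the paper's formula $R_1^-f_s=\tilde h_{s,1}^{-1}\circ f_s^{-1}\circ h_{s,1}$; your argument never supplies this, and under the literal reading (with $f_s^{-1}$ the branch valued in $\tilde I_2^1$) it cannot be supplied, since $\tilde h_{s,1}^{-1}\circ(f_s|_{I_1^1})^{-1}\circ h_{s,1}$ is a composition of globally affine maps (it computes out to $x\mapsto 1-x$), whereas $\hat h_{s,1}^{-1}\circ f_s\circ h_{s,1}$ has slopes that vary from piece to piece. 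So the base case does not close as written: one must either reinterpret $R_n^-$ (the paper's definition is at best ambiguous here) or prove the equality at the level of endpoint data, i.e.\ directly compare with $f_{\sigma(s)}$ as in Lemma~\ref{lem1}, rather than by collapsing affine factors against an inverse branch.
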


\noindent The~\cref{lem1} and~\cref{lem2}  give the following result.

\begin{prop} \label{thm1}
\ There exists a map $f_{s^*} \in U_\infty,$ where $s^*$ is characterized by $$Rf_{s^*} = f_{s^*}. $$ In particular, $U_\infty  = \{f_{s^*}\}.$
\end{prop}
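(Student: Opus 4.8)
The plan is to use \cref{lem1} and \cref{lem2} to replace the nonlinear operator $R$ by the shift $\sigma$ on scaling data, and then to locate the fixed point as the unique admissible scaling data. First I would combine the two lemmas: \cref{lem2} gives $R^n f_s = R_n f_s$ and \cref{lem1} gives $R_n f_s = f_{\sigma^n(s)}$, so $R^n f_s = f_{\sigma^n(s)}$ for every $f_s \in U_\infty$, and in particular $R f_s = f_{\sigma(s)}$. Since the renormalization of a period tripling infinitely renormalizable map is again of this type, this shows at once that $R$ maps $U_\infty$ into itself and that, under the identification $f_s \leftrightarrow s$, it acts simply as the shift $\sigma$ on the underlying proper scaling data. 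The whole proposition therefore reduces to determining which proper scaling data produce maps lying in $U_\infty$.

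The heart of the argument is the claim that there is exactly one such scaling data. To see this I would impose the defining conditions (i)--(ii) of period tripling infinite renormalizability directly on the candidate $f_s$, which by construction is the affine interpolation of the fixed quadratic $u_c$ across $\partial I_1^n$ and $\partial I_3^n$, with critical point $\{c\}=\bigcap_{n\ge1} I_2^n$. Condition (i) fixes the maximal affine domains $[f_s(y_n),1]$ and $[0,f_s^2(y_n)]$ of $f_s^{3^n-1}$ and $f_s^{3^n-2}$, while condition (ii) demands that these branches map exactly onto $I_2^n$; following the interval orbit $I_2^n\to I_3^n\to I_1^n\to I_2^n$ of Figure~\ref{fig:ptr}, this pins the position of every interval $I_1^n,I_2^n,I_3^n$ relative to the quadratic graph of $u_c$. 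Reading these constraints level by level turns them into a recursion for the tri-factors $s(n)=(s_1(n),s_2(n),s_3(n))$ driven by the fixed geometry of $u_c$, supplemented by the compatibility relation linking $c$ to the central factor $s_2$ through the appearance of $u_c(0)$ in the definition of $\tilde{s}_2$. I would show this system admits a unique proper solution and that, because the driving geometry is the same at every level, the solution is the constant sequence $s^*(n)\equiv(s_1^*,s_2^*,s_3^*)$; this is precisely the assertion $U_\infty=\{f_{s^*}\}$.

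With uniqueness in hand the fixed-point property is immediate: $R$ maps the singleton $U_\infty=\{f_{s^*}\}$ into itself, so $Rf_{s^*}=f_{s^*}$. Moreover, since a proper scaling data is recovered from the geometry of the nested domain $D_s=\bigcup_{n\ge1}(I_1^n\cup I_3^n)$ and the critical point $c$, the assignment $s\mapsto f_s$ is injective; hence $f_{\sigma(s^*)}=Rf_{s^*}=f_{s^*}$ gives $\sigma(s^*)=s^*$, confirming that $s^*$ is the shift-invariant data found above, that it lies in $T_3$, and that it is proper, so that $|I_j^n|\le(1-\epsilon)^n$.

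I expect the main obstacle to be the uniqueness step, namely extracting the finite recursion from the ``maximal affine domain'' conditions (i)--(ii) and proving that it has a unique proper, shift-invariant solution. The delicate point is the bookkeeping of how the $3^n$-fold iterate of $f_s$ decomposes into the affine branches following the cycle $I_2^n\to I_3^n\to I_1^n\to I_2^n$, while keeping the quadratic tip of $u_c$ consistent with the imposed scalings; once this algebra is settled, the reduction to the shift via \cref{lem1} and \cref{lem2} and the deduction of the fixed point are routine.
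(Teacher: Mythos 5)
Your overall skeleton --- replace $R$ by the shift $\sigma$ via Lemmas~\ref{lem1} and~\ref{lem2}, then extract a level-by-level recursion from the renormalization combinatorics --- is the same as the paper's. But there are two genuine gaps at precisely the step you identify as the heart of the argument. First, your reason for constancy of the solution (``because the driving geometry is the same at every level, the solution is the constant sequence'') is a non sequitur: an autonomous recursion can perfectly well have non-constant orbits. What the paper actually does is parametrize every admissible tri-factor by the critical point $c_n$ of $f_{\sigma^n(s)}$: imposing the combinatorics of the cycle $I_2^n \to I_3^n \to I_1^n \to I_2^n$ yields Eqns.~(\ref{eq01})--(\ref{eq04}), which are solved to give $s_i(n) = S_i(c_n)$ and a \emph{one-dimensional} recursion $c_{n+1} = \mathcal{R}(c_n)$, Eqn.~(\ref{eq13}). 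Infinite renormalizability then forces the orbit $\{c_n\}$ to remain for all $n$ in the feasible domain $F_d$ of (\ref{eqfd}), and since $\mathcal{R}$ is expanding there, with no fixed point in $F_{d_1}$ and exactly one fixed point $c^*$ in $F_{d_2}$ (Figure~\ref{fig:R}), the only orbit that never escapes is the constant one; this is what gives $s(n) \equiv s^*$ and $U_\infty = \{f_{s^*}\}$. Without the reduction to a single variable and the expansion/escape analysis on $F_d$, your uniqueness claim does not follow.

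Second, existence is not free in your scheme. You deduce $Rf_{s^*} = f_{s^*}$ from ``$R$ maps the singleton $U_\infty$ into itself,'' but that presupposes $U_\infty \neq \emptyset$, which is exactly what must be proved. In the paper, nonemptiness is established by explicit, computer-assisted computation: $S_1, S_2, S_3$ and $\mathcal{R}$ are explicit rational functions of $c$ obtained with Mathematica, the feasible domain is computed numerically as $F_d = (0.398039\ldots,\,0.430159\ldots) \cup (0.430159\ldots,\,0.456310\ldots)$, and $\mathcal{R}$ is verified to cross the diagonal exactly once in $F_d$, at $c^* = 0.440262\ldots$; only then is the constant scaling data $s^*$ built from $c^*$, with $\sigma(s^*) = s^*$ and hence $Rf_{s^*} = f_{s^*}$ by Lemma~\ref{lem1}. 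Your proposal gestures at ``showing the system admits a unique proper solution'' but offers no mechanism, and since the question reduces to sign conditions on high-degree polynomials in $c$ (whether $F_d$ is nonempty and whether $\mathcal{R}$ has a fixed point inside it), some such explicit verification is unavoidable; no soft argument of the kind you sketch can settle it.
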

\begin{proof}
\ Consider $s : \mathbb{N} \rightarrow T_3$ be proper scaling data such that $f_s  $ is an infinitely renormalizable. Let $c_n$ be the critical point of $f_{\sigma^n(s)}.$ Then

\FloatBarrier
\begin{figure}[h]
\centering
{\includegraphics [width=90mm]{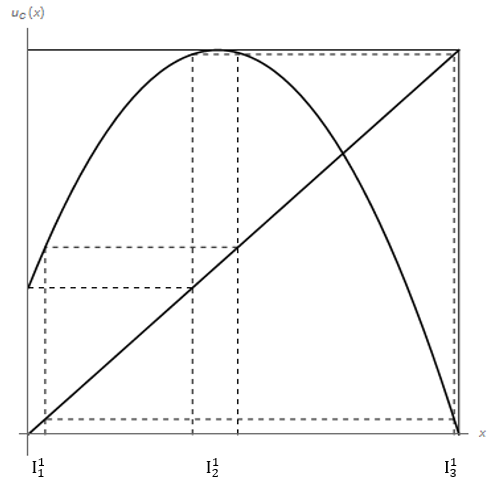}}
\caption{Period three cobweb diagram}
\label{fig:length}
\end{figure} 
\FloatBarrier

\FloatBarrier
\begin{figure}[h]
\centering
{\includegraphics [width=90mm]{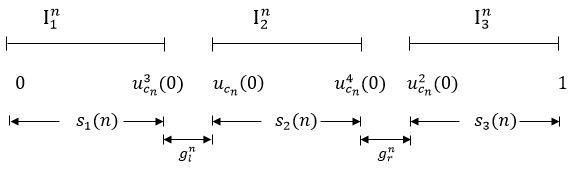}}
\caption{Length of the intervals gaps}
\label{fig:ilength1}
\end{figure} 
\FloatBarrier

\noindent from Figures \ref{fig:length} and~\ref{fig:ilength1}, we have the following 
\begin{eqnarray}
 u_{c_n}^2(0) &= 1- s_3(n) \label{eq01} \\ 
  u_{c_n}^3(0) &= u_{c_n}(1-s_3(n)) \nonumber  \\ &= s_1(n) \label{eq02} \\
 u_{c_n}^4(0) &= s_2(n) + u_{c_n}(0)  \label{eq03} \\
 c_{n+1} &= \frac{u_{c_n}^4(0)-c_n}{s_2(n)}.  \label{eq04}
\end{eqnarray}
 
\noindent Since $(s_1(n), s_2(n), s_3(n)) \in T_3,$ we have the following conditions 
\begin{eqnarray}
 s_i(n) >0,\; \textrm{for} \; i = 1,2,3  \label{eq5} \\
 s_1(n) +s_2(n) +s_3(n) & <1 \label{eq6} 
 \end{eqnarray}

\ As the intervals $I_i^n,$ for $i = 1,2,3,$ are pairwise disjoint, let $g_l^n$ and $g_r^n$ are the gaps  between $I_1^n \;\&\; I_2^n$ and $I_2^n \; \& \; I_3^n$ respectively, which are shown in Figure  \ref{fig:ilength1}. Then we have,
for $n \in \mathbb{N},$
\begin{eqnarray}
g_l^n = u_{c_n}(0)-u_{c_n}^3(0) \equiv G_l(c_n) >0  \label{g1} \\
g_r^n = u_{c_n}^2(0)-u_{c_n}^4(0) \equiv G_r(c_n) > 0 \label{g2} \\
 0 < c_n < \frac{1}{2} \label{eq6a}
\end{eqnarray}

\noindent  Solving Eqns.~(\ref{eq01}),~(\ref{eq02}),~(\ref{eq03}) and~(\ref{eq04}) by using Mathematica, we get

\begin{eqnarray}
 \fl s_1(n)  = 1- \frac{ (c_n - 8 c_n^2 + 21 c_n^3 - 25 c_n^4 + 17 c_n^5 - 6 c_n^6 + c_n^7)^2}{(1 - c_n)^{14}} \equiv S_1(c_n) \label{eq10} \\  
\fl s_2(n) =  \frac{c_n^2 (1-c_n)^{28} -\Big((-1+c_n)^{15}+(c_n - 8 c_n^2 + 21 c_n^3 - 25 c_n^4 + 17 c_n^5 - 6 c_n^6 + c_n^7)^2\Big)^2}{(1 - c_n)^{30}} \nonumber \\ \hspace{-1.5cm} \equiv S_2(c_n)   \label{eq11} \\
\fl s_3(n) = \frac{(-1 + 3 c_n - 2 c_n^2 + c_n^3)^2}{(1 - c_n)^6} \equiv S_3(c_n) \label{eq12} \\ 
\fl c_{n+1} = \frac{(1 - c_n)^{31} - ((1 - c_n)^{15} - (c_n - 8 c_n^2 + 21 c_n^3 - 25 c_n^4 + 
     17 c_n^5 - 6 c_n^6 + c_n^7)^2)^2}{ c_n^2 (1 - c_n)^{28} - ((1 - c_n)^{15} - (c_n - 8 c_n^2 + 21 c_n^3 - 
     25 c_n^4 + 17 c_n^5 - 6 c_n^6 + c_n^7)^2)^2}  \nonumber \\ \hspace{-1.5cm} \equiv \mathcal{R}(c_n)  \label{eq13}
\end{eqnarray}
 
\noindent The graphs of $S_1,\; S_2,\; S_3$ and $S_1+S_2+S_3$ are shown in Figures~\ref{fig:plt11} and \ref{fig:plt}.

\begin{figure}[ht]
  \centering
  \begin{subfigure}[b]{0.4\linewidth}
    \centering\includegraphics[width=140pt]{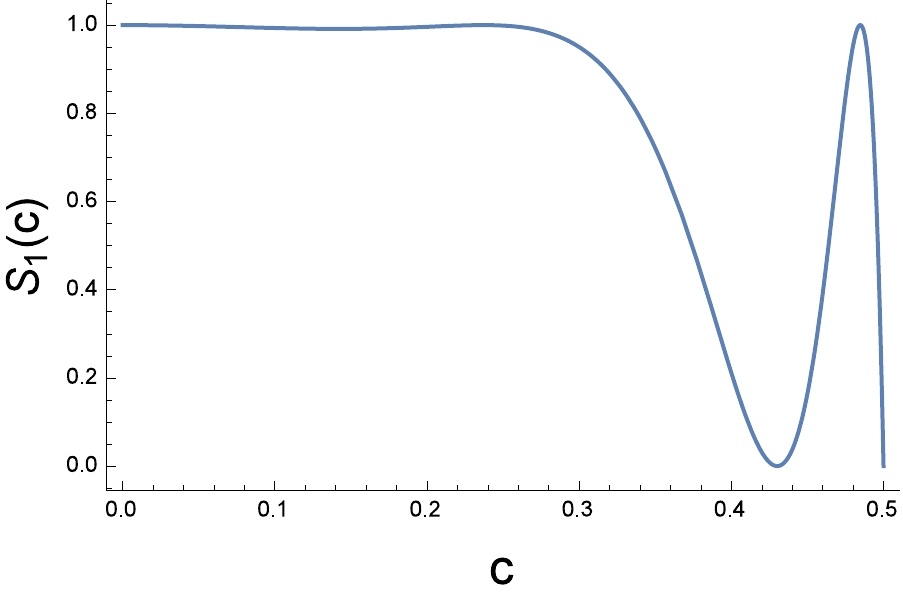}
    \caption{\label{fig:fig1}}
  \end{subfigure}%
  \begin{subfigure}[b]{0.4\linewidth}
    \centering\includegraphics[width=140pt]{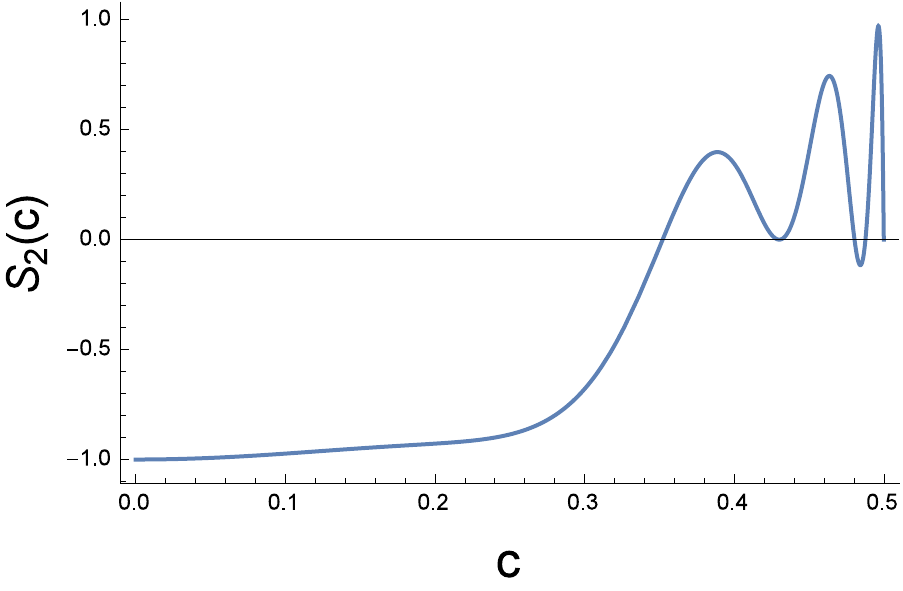}
    \caption{\label{fig:fig2}}
  \end{subfigure}
  \caption{(\subref{fig:fig1}) and (\subref{fig:fig2}) show the graphs of $S_1(c)$ and $S_2(c)$ respectively.}
  \label{fig:plt11}
  \end{figure}
  \begin{figure}
  \begin{subfigure}[b]{0.4\linewidth}
    \centering\includegraphics[width=140pt]{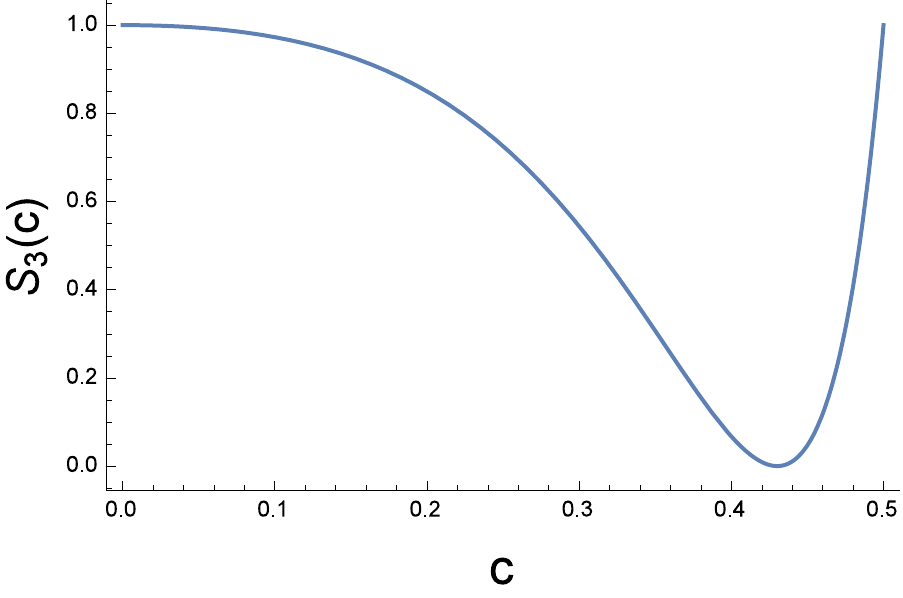}
    \caption{\label{fig:fig3}}
  \end{subfigure}%
  \begin{subfigure}[b]{0.4\linewidth}
    \centering\includegraphics[width=140pt]{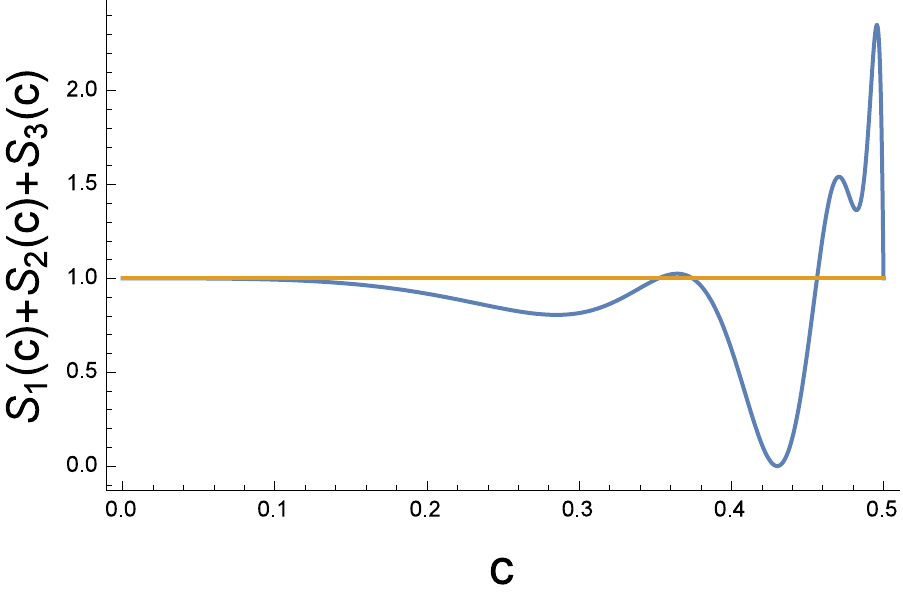}
    \caption{\label{fig:fig4}}
  \end{subfigure}
  \caption{ (\subref{fig:fig3}) and (\subref{fig:fig4}) show the graphs of $S_3(c)$ and $(S_1+S_2+S_3)(c)$ respectively.}
 
  \label{fig:plt}
\end{figure}               

\noindent Note that the conditions (\ref{eq5}). (\ref{g1}) and (\ref{g2}) gives the condition (\ref{eq6}) $$ 0<\sum_{i=1}^{3} s_i(n) < 1.$$
Therefore, the condition~(\ref{eq5}) together with (\ref{g1}), (\ref{g2}) and~(\ref{eq6a})  define the feasible domain $F_d$ to be: 
\begin{eqnarray}\label{eqfd}
\fl F_d = \Big\{ \; c \in \left(0,\; \frac{1}{2}\right) \; : \;  S_i(c) > 0 \; \textrm{for} \; i=1,2,3, \; G_l(c) >0 ,\; G_r(c)>0 \Big\} 
\end{eqnarray} 

To compute the feasible domain $F_d,$  we need to find subinterval(s) of $(0, 0.5)$ which satisfies the conditions of (\ref{eqfd}). By using Mathematica software, we employ the following command to obtain the feasible domain
 $$\textup{N[Reduce[$\{S_1(c) > 0,S_2(c) > 0,S_3(c) > 0,G_l(c) > 0,G_r(c) > 0,0<c<0.5\}$,c]]}. $$    This yields: 
\begin{eqnarray*}
\fl F_d = (0.398039... ,\;0.430159...) \cup (0.430159...,\;0.456310...) \equiv F_{d_1} \cup F_{d_2}.
\end{eqnarray*}
From the Eqn.(\ref{eq13}), the graphs of $\mathcal{R}(c)$ are plotted in the sub-domains $F_{d_1}$ and $F_{d_2}$ of $F_d$ which are shown in Figure \ref{fig:R}. 
 
\begin{figure}[ht]
  \centering
  \begin{subfigure}[b]{0.5\linewidth}
    \centering\includegraphics[width=130pt]{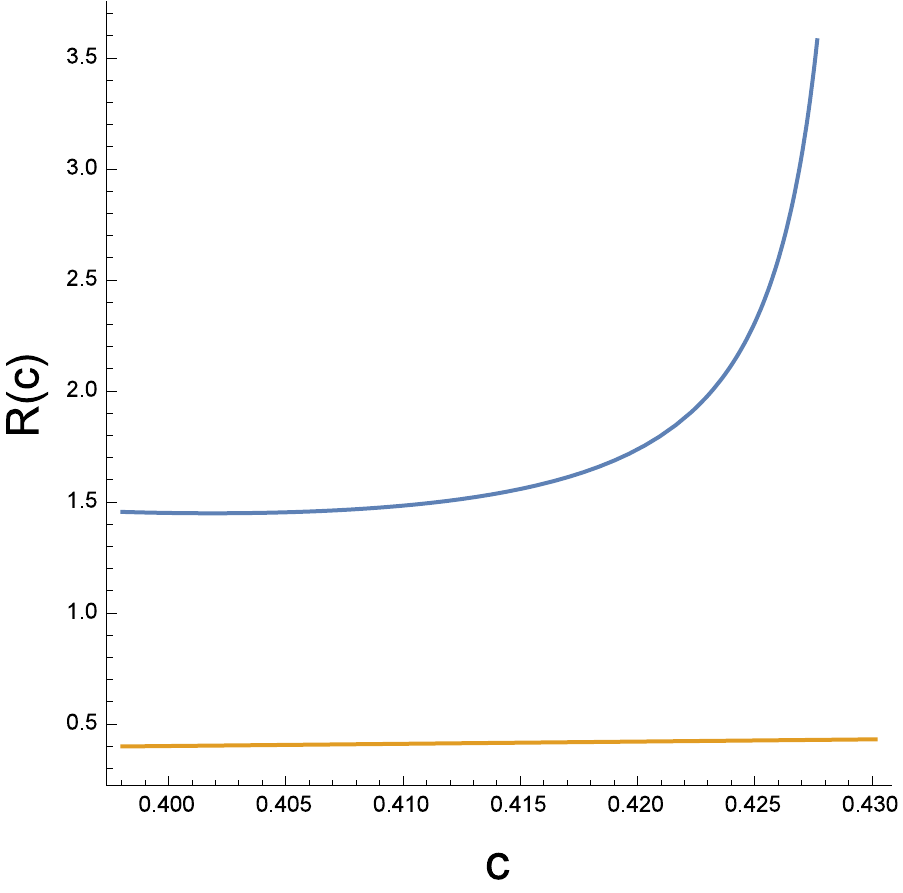}
   \subcaption{ $\mathcal{R}$ has no fixed point in $F_{d_1}.$ }
    \label{fig:picf1}
  \end{subfigure}%
  \begin{subfigure}[b]{0.5\linewidth}
    \centering\includegraphics[width=160pt]{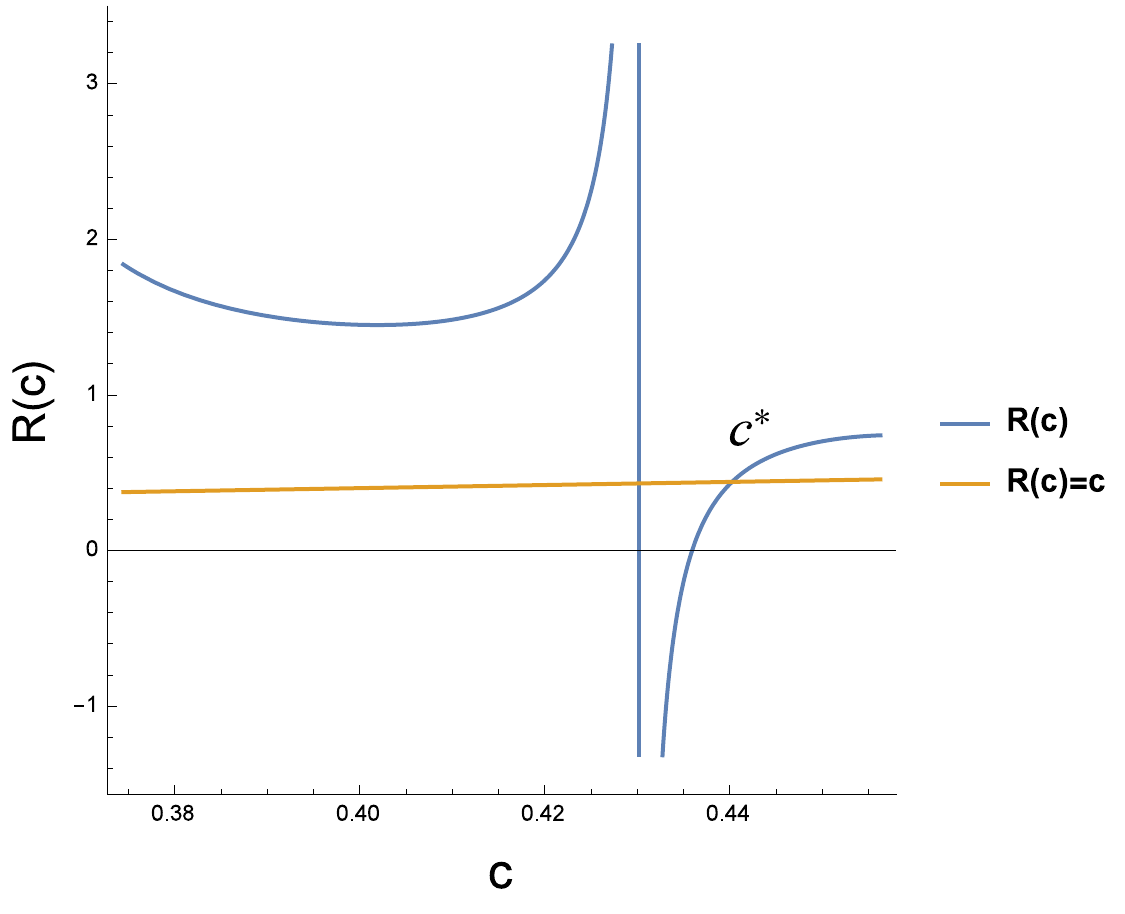}
    \subcaption{$\mathcal{R}$ has only one fixed point in $F_{d_2}.$ }
    \label{fig:picf2}
  \end{subfigure}
  \caption{The graph of $\mathcal{R} : F_d \rightarrow \mathbb{R}$ and the diagonal $\mathcal{R}(c) = c.$}
  \label{fig:R}
\end{figure} 
 
Clearly, the map $\mathcal{R}  : F_d \rightarrow \mathbb{R}$ is expanding in the neighborhood of the fixed point $c^*$ which is shown in Figure~\ref{fig:picf2}.  By using Mathematica, we compute the only fixed point $c^* = 0.440262... $ in $F_d$ such that $$\mathcal{R}(c^*) = c^*$$ corresponds to the infinitely renormalizable map $f_{s^*}.$ 

The graphs of piece-wise infinitely renormalizable map $f_{s^*}$ and zoomed part of $f_{s^*}$ are shown in the Figure~\ref{fig:replt1}.

\begin{figure}[ht]
  \centering
  \begin{subfigure}[b]{0.5\linewidth}
    \centering\includegraphics[width=160pt]{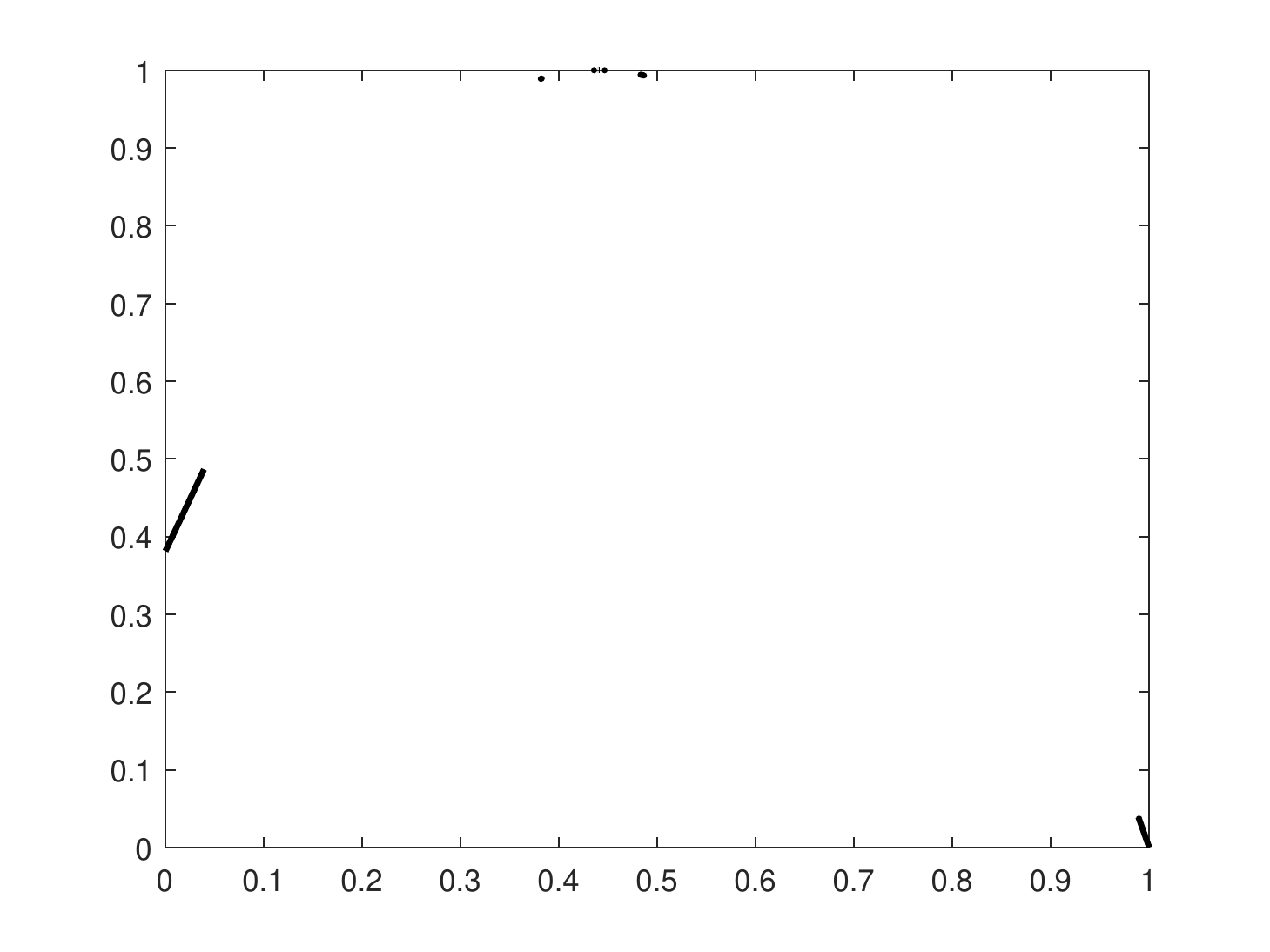}
   \subcaption{ $f_{s^*}$ in $[0,1].$ }
    \label{fig:pic01}
  \end{subfigure}%
  \begin{subfigure}[b]{0.5\linewidth}
    \centering\includegraphics[width=160pt]{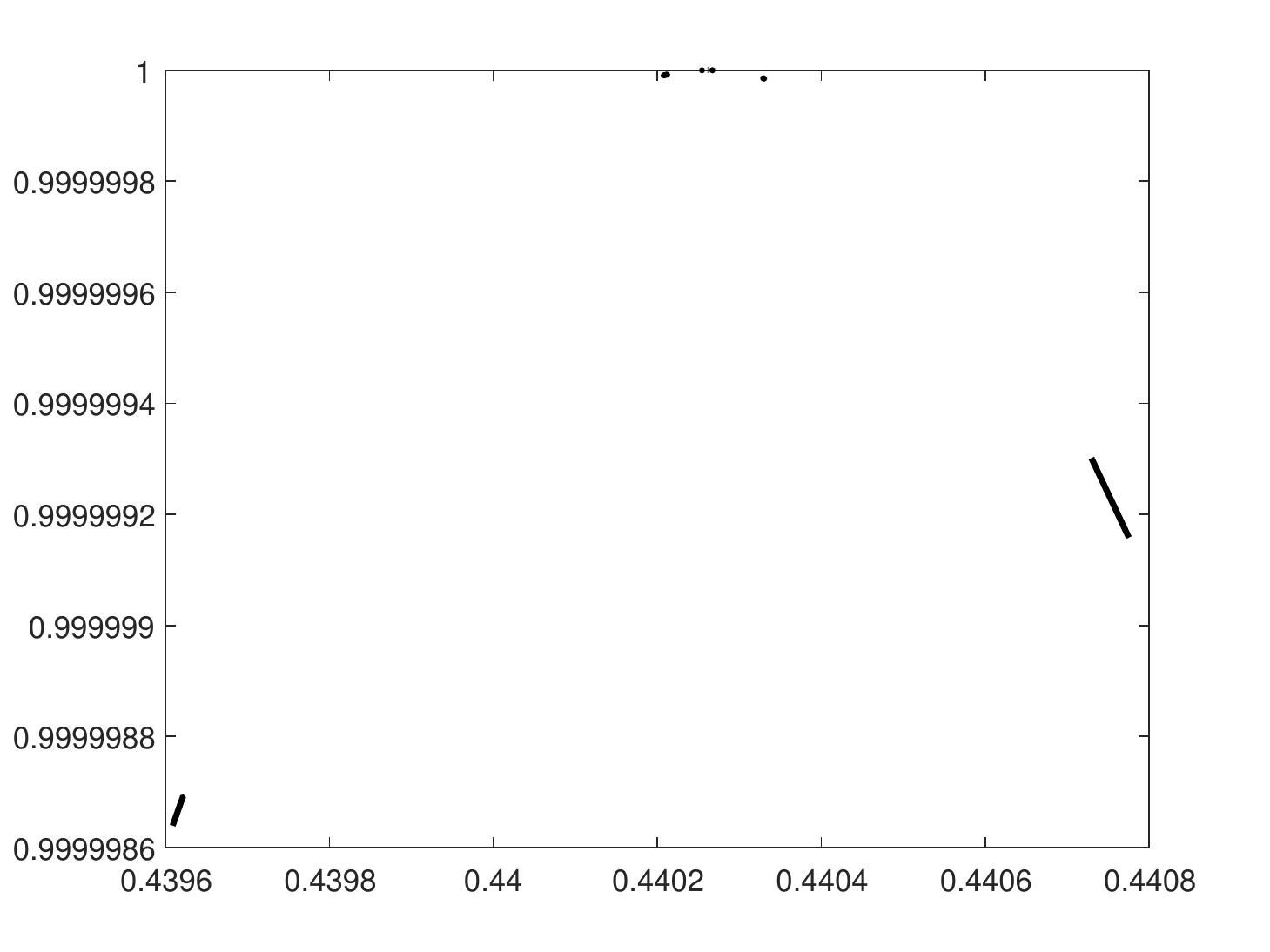}
    \subcaption{Zoomed part of  $f_{s^*} $ in $[0.4396,0.4408]$ }
    \label{fig:pic02}
  \end{subfigure}
  \caption{The graph of map $f_{s^*}.$}
  \label{fig:replt1}
\end{figure}
 
\noindent In other words, consider the scaling data $s^* : \mathbb{N} \rightarrow T_3,$ with 
 \begin{eqnarray*}
 s^*(n) &= (s_1^*(n),\; s_2^*(n),\; s_3^*(n) )  \\
  &= (u_{c^*}^3(0),\; u_{c^*}^4(0)-u_{c^*}(0),\; 1-u_{c^*}^2(0)).
 \end{eqnarray*}
Then $\sigma(s^* ) = s^* $ and using Lemma \ref{lem1} we have
   $$Rf_{s^*} = f_{s^*}. $$
\end{proof}


\begin{Lem}\label{lem24}
\ If $ f_{{s}^*}$ is the map with a proper scaling data $ {s}^* = (s_1^*, s_2^*, s_3^*),$ then we have
 $$(s_2^*)^2=s_3^*.$$
\end{Lem}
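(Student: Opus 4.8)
The plan is to avoid the unwieldy closed forms \eqref{eq11} and \eqref{eq12} for $S_2$ and $S_3$ altogether, and instead read the identity off from the self-consistency of the scaling data at the fixed point. The key realization — and the first thing I would flag — is that $(s_2^*)^2 = s_3^*$ is \emph{not} a polynomial identity in $c$: one checks that $S_2(c)^2 \neq S_3(c)$ for generic $c$, so the fixed-point equation $\mathcal{R}(c^*)=c^*$ must enter the argument somewhere. I would bring it in through the critical-point recursion~\eqref{eq04}, which is the only place the renormalization dynamics is felt.

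Write $a = u_{c^*}(0)$ and $b = u_{c^*}^4(0)$ for brevity. Since $s^*$ is the fixed point, $\sigma(s^*)=s^*$ and every renormalized map shares the same critical point, so $c_{n+1}=c_n=c^*$. Substituting this into~\eqref{eq04} gives the relation $c^* s_2^* = b - c^*$, while the defining relation~\eqref{eq03} for $s_2$ gives $s_2^* = b - a$. Eliminating $b$ between these two equations yields $s_2^*(c^*-1) = a - c^*$, that is,
$$s_2^* = \frac{c^* - a}{1 - c^*} = \frac{c^* - u_{c^*}(0)}{1 - c^*}.$$

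Next I would compute $s_3^*$ directly from~\eqref{eq01} together with the explicit quadratic form of $u_{c^*}$. Since $u_c(x) = 1 - (x-c)^2/(1-c)^2$, one has $1 - u_c^2(0) = 1 - u_c(a) = (a-c)^2/(1-c)^2$, so that
$$s_3^* = 1 - u_{c^*}^2(0) = \frac{\bigl(u_{c^*}(0) - c^*\bigr)^2}{(1 - c^*)^2}.$$
Comparing the two displays gives $s_3^* = \bigl((c^* - u_{c^*}(0))/(1-c^*)\bigr)^2 = (s_2^*)^2$, which is the claim.

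The only genuine content is the elimination step producing the closed form $s_2^* = (c^* - u_{c^*}(0))/(1-c^*)$; once that is in hand, the identity is just the observation that $s_3^*$ is, by construction, the square of exactly the same quantity. I do not expect a serious obstacle, only bookkeeping: one must remember to invoke the fixed-point condition through $c_{n+1}=c_n$ in~\eqref{eq04} (the identity fails off the fixed point), and one must keep signs straight — on the feasible domain $F_d$ one has $u_{c^*}(0) < c^*$, so $c^* - u_{c^*}(0) > 0$ and $s_2^*$ is genuinely positive, consistent with $s_2^* \in (0,1)$.
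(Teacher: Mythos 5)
Your proof is correct, but it takes a genuinely different route from the paper's. The paper argues geometrically: it sets $\hat{I}_2^n = f_{s^*}(I_2^n) = [f_{s^*}(y_n),1]$, uses the fact that $f_{s^*}^{3^n-1} : \hat{I}_2^n \rightarrow I_2^n$ is affine and carries $\hat{I}_2^{n+1}$ onto $I_3^{n+1}$ to get $|\hat{I}_2^{n+1}|/|\hat{I}_2^n| = s_3^*$, and then invokes the quadratic tip ($f_{s^*}(y_n) = u_{c^*}(y_n)$, so $|\hat{I}_2^n| = (y_n-c^*)^2/(1-c^*)^2$) together with the self-similar position of $c^*$ inside the nested intervals ($|y_{n+1}-c^*| = s_2^*|y_n-c^*|$, since $c^*$ is the fixed point of the affine contraction $\tilde{s}_2$) to identify that same ratio with $\left(|I_2^{n+1}|/|I_2^n|\right)^2 = (s_2^*)^2$. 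You instead work entirely at the level of the first-generation equations: eliminating $u_{c^*}^4(0)$ between (\ref{eq03}) and the fixed-point form of (\ref{eq04}) to obtain the closed form $s_2^* = (c^*-u_{c^*}(0))/(1-c^*)$, then computing $s_3^* = 1-u_{c^*}^2(0) = (u_{c^*}(0)-c^*)^2/(1-c^*)^2$ from (\ref{eq01}). Both proofs rest on the same two ingredients --- fixed-point self-consistency and the quadratic form of $u_c$ --- but package them differently. The paper's version is more conceptual (the identity appears as the interplay between the per-generation scaling $s_3^*$ of the image intervals and the squaring effect of the quadratic tip) and transfers verbatim to the quintupling case, where it yields $(s_2^*)^2 = s_5^*$. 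Yours is shorter, purely algebraic, produces an explicit formula for $s_2^*$ in terms of $c^*$ that the paper does not state, and makes explicit the correct and worthwhile observation that the identity is \emph{not} an identity in $c$ but genuinely requires $\mathcal{R}(c^*)=c^*$; the analogous elimination (using (\ref{35}) and (\ref{36})) also handles the quintupling case. One small imprecision: your sign remark that $u_c(0) < c$ ``on the feasible domain $F_d$'' holds only on the component $F_{d_2}$ containing $c^*$; the interior boundary point $0.430159\ldots$ of $F_d$ is exactly the root of $1-3c+2c^2-c^3$, i.e.\ where $u_c(0)=c$, and on $F_{d_1}$ the inequality reverses. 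Since both quantities are squared in the final step, this does not affect the validity of your argument.
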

\begin{proof}
\ Let $\hat{I}_2^n = f_{{s}^*}(I_2^n)  = [f_{{s}^*}(y_{n}),1]$ and $\hat{I}_3^{n+1} = f_{{s}^*}(I_3^{n+1}).$ Then $f_{{s}^*}^{ 3^n-1} : \hat{I}_2^n \rightarrow I_2^n  $ is affine, monotone and onto. Further, by construction
$$ f_{{s}^*}^{ 3^n-1} : \hat{I}_2^{n+1} \rightarrow I_3^{n+1}. $$ Hence, $$\frac{|\hat{I}_2^{n+1}|}{|\hat{I}_2^n|} = s_3^*.$$ Therefore,
$|I_2^n| = (s_2^*)^n$ and $|\hat{I}_2^n| = (s_3^*)^n$. Since, $$f_{{s}^*}(y_n) = u_{{c}^*}(y_n).$$ This implies,
$$s_3^* = \frac{|\hat{I}_2^{n+1}|}{|\hat{I}_2^n|} = \left(\frac{{y_{n+1}}-c}{{y_{n}}-c}\right)^2  =\left( \frac{|{I}_2^{n+1}|}{|{I}_2^n|}\right)^2 = (s_2^*)^2.$$

\end{proof}

\begin{rem}\label{lem3}
\ A proper scaling data $(s_1^*, s_2^*, s_3^*)$ is satisfying the following properties, for all $n \in \mathbb{N},$
\begin{enumerate}[label=(\roman*)] \normalsize{ 
\item $ \frac{|I_2^{n+1}|}{|I_2^{n}|} = s_2^*$ \textrm{and} $  \frac{|I_1^{n+1}|}{|I_1^{n}|}  = \frac{|I_3^{n+1}|}{|I_3^{n}|} = s_2^* $
\item $\frac{|I_1^{n+1}|}{|I_2^{n}|} = s_1^*$
\item $\frac{|I_3^{n+1}|}{|I_2^{n}|} = s_3^*$}
\end{enumerate}
\end{rem}



\begin{rem}
\ Let $I_2^n = [ y_n, z_n]$ be the interval containing $c^*$ corresponding to the scaling data  $s^*  = (s_1^*, s_2^*, s_3^*)$ then
 $$  f_{s^*}(y_n) = u_{c^*}(y_n) .$$ Hence, $f_{s^*}$ has a quadratic tip. 
\end{rem}

\begin{rem}\label{r5}
\ The invariant Cantor set of the map $f_{s^*} $ is more complex than the invariant period doubling  Cantor set of piece-wise affine infitely renormalizable map \cite{CMMT}. The complexity in the sense that:
\begin{itemize} 
\item[(i)] The geometry of Cantor set of $f_{s^*}$ is similar to the geometry of Cantor set of \cite{CMMT}, on each scale and everywhere the same scaling ratios are used,
\item[(ii)] But unlike the Cantor set of \cite{CMMT}, there are now three ratios at each scale.
\end{itemize}

 Furthermore, the scaling data corresponding to $f_{s^*} $ is very different from the geometry of Cantor attractor of analytic renormalization fixed point, in which there are no two places where the same scaling ratios are used at all scales and where the closure of the set of ratios itself a Cantor set \cite{BMT} .
\end{rem}

\subsection{$C^{1+Lip}$ extension of $f_{s^*} $} \label{extsn}
\ In subsection~\ref{p2}, we constructed a piece-wise affine infinitely renormalizable map $f_{s^*}$ corresponding to the proper scaling data $s^* .$ \\ Our goal is to construct an extension of $f_{s^*}$ in the class $C^{1+Lip}$, consisting of infinitely renormalizable maps. 
\\

 Let $ F : [0,1]^2 \rightarrow [0,1]^2 $ be the scaling function defined as
 
$$
F\left(\begin{array}{ll}
x \\ y
\end{array} 
\right) =  \left(\begin{array}{ll}
u_{c^*}(0)+s_2^*(1-x) \\ 1- s_3^*(1-y)
\end{array} 
\right)
\equiv \left(\begin{array}{ll}
F_1(x) \\ F_2(y)
\end{array} 
\right)
$$
 



 Let $K$ be the graph of $\mathcal{F}_{s^*}$  which is an extension of $f_{s^*},$ where $f_{s^*} : D_{s^*} \rightarrow [0,1]. $ Let $K^1$ and $K^2$ be the graph of $\mathcal{F}_{s^*}|_{[z_1, y_{0}]}$ and $\mathcal{F}_{s^*}|_{[z_0, y_{1}]},$ respectively, as shown in Figure \ref{fig:lipextsn}. Then $$K = \cup_{m \geq 1} F^m(K^1 \cup K^2)$$ is the graph of $\mathcal{F}_{s^*}.$ We claim that $\mathcal{F}_{s^*}$ is a $C^{1+Lip}$ unimodal map with quadratic tip.
 
  Let $B_{0} = [0,1] \times [0,1]$ and for  each $n \in \mathbb{N}, \textrm{define} $  $B_{n} = F^{n}(B_{0})$  as: \\

$ B_n=  \left\{\begin{array}{ll}
 \;[y_n,z_n] \times [\tilde{y}_{n},1], \;\;\; \textrm{if}\; n \; \textrm{is odd}  \\ \ [z_n,y_n] \times [\tilde{y}_{n},1], \;\;\; \textrm{if}\; n  \; \textrm{is even}
\end{array}
\right.$

\noindent Also, let $p_n$ be the point on the graph of the unimodal map $u_{{c}^*}(x).$ For all $n \in \mathbb{N},$ define

$ p_n=  \left\{\begin{array}{ll}
(y_{\frac{n+1}{2}},\tilde{y}_{\frac{n+1}{2}}), \;\;\; \textrm{if}\; n \; \textrm{is odd}  \\ (z_{\frac{n}{2}},\tilde{z}_{\frac{n}{2}}), \;\;\; \;\;\;\; \;\;\textrm{if}\; n  \; \textrm{is even}
\end{array}
\right.$\\
where  $ \tilde{y}_{n} = u_{c^*}(y_n)$ and $\tilde{z}_{n} = u_{c^*}(z_n).$

\begin{figure}[!htb]
\centering
{\includegraphics [width=90mm]{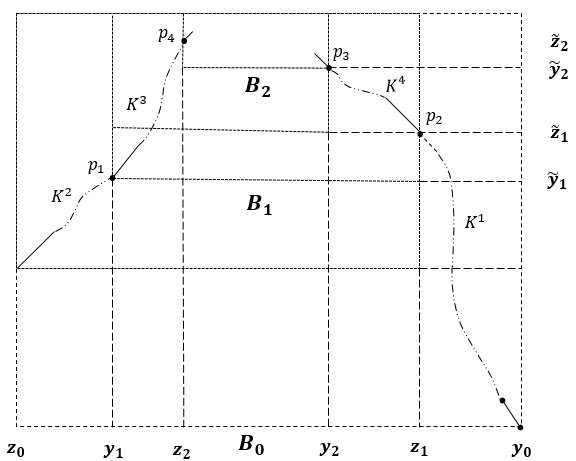}}
\caption{Extension of $f_{s^*}$}
\label{fig:lipextsn}
\end{figure} 

Then the above construction of boxes $B_n$ together with the points $p_n$ will lead to following proposition,

\begin{prop}
\ $K $ is the graph of $\mathcal{F}_{s^*}$ which is a $C^1$ extension of $f_{s^*}.$
\end{prop}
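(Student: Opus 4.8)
The plan is to exploit the self-similar structure encoded by the scaling function $F=(F_1,F_2)$. The first observation is that $F$ is an affine contraction of $[0,1]^2$: the first coordinate $F_1(x)=u_{c^*}(0)+s_2^*(1-x)$ has slope $-s_2^*$, and since for the fixed scaling data $\tilde{s}_2(1)$ is exactly $F_1$, its unique fixed point is the critical point $c^*=\bigcap_n I_2^n$; the second coordinate $F_2(y)=1-s_3^*(1-y)$ has slope $s_3^*$ and fixed point $1$. Hence $F$ contracts to the tip $(c^*,1)=(c^*,u_{c^*}(c^*))$, and the boxes $B_n=F^n(B_0)$ form a nested sequence with $\mathrm{diam}(B_n)\to 0$ shrinking to this point. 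Because $F_1([0,1])=I_2^1$ and $[z_1,y_0]\cup[z_0,y_1]=[0,1]\setminus\mathrm{int}(I_2^1)$, the seed graphs $K^1\cup K^2$ together with their images $F^m(K^1\cup K^2)$ are graphs over the disjoint annular intervals $I_2^m\setminus\mathrm{int}(I_2^{m+1})$, which together with $\{c^*\}$ tile $[0,1]$. Thus $K$, with the tip adjoined, is the graph of a single function $\mathcal{F}_{s^*}:[0,1]\to[0,1]$ extending $f_{s^*}$, and continuity at $c^*$ follows from $\mathrm{diam}(B_n)\to 0$.

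Next, by construction $F(K)\subseteq K$, so $\mathcal{F}_{s^*}$ satisfies the functional equation $\mathcal{F}_{s^*}\circ F_1=F_2\circ\mathcal{F}_{s^*}$ on $[0,1]\setminus\{c^*\}$. Differentiating and using $F_1'=-s_2^*$, $F_2'=s_3^*$ together with Lemma~\ref{lem24} (which gives $s_3^*=(s_2^*)^2$) yields the derivative-scaling relation
\[
\mathcal{F}_{s^*}'(F_1(x))=\frac{F_2'}{F_1'}\,\mathcal{F}_{s^*}'(x)=-\,s_2^*\,\mathcal{F}_{s^*}'(x).
\]
This is the heart of the argument: the slopes of the pieces of $\mathcal{F}_{s^*}$ in the $m$-th annulus are exactly $(-s_2^*)^m$ times the corresponding slopes in the seed, and hence decay geometrically toward the tip. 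I would first check directly that $\mathcal{F}_{s^*}$ is $C^1$ on the seed $[z_1,y_0]\cup[z_0,y_1]$: it is affine (hence $C^\infty$) on each piece inherited from $f_{s^*}$, and its one-sided derivatives agree at the interior junctions of the seed by the way the extension is arranged to pass through the reference points $p_n$ on the graph of $u_{c^*}$. The functional equation then propagates both $C^1$-smoothness and the matching of one-sided derivatives from one annulus to the next: if derivatives match at a boundary point $q$ of the seed, then by the scaling relation they match at $F^m(q)$ for every $m$, i.e. at every interior junction of $K$.

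It then remains to verify differentiability, with continuous derivative, at the tip $c^*$. Here the geometric decay $\mathcal{F}_{s^*}'|_{m\text{-th annulus}}=(-s_2^*)^m\,O(1)$ forces all one-sided slopes of the pieces entering $B_m$ to tend to $0$ as $m\to\infty$, while the reference points $p_n$ lie on $u_{c^*}$, whose derivative at the critical point satisfies $u_{c^*}'(c^*)=0$. Combining these, $\mathcal{F}_{s^*}'(c^*)=0$ and $\mathcal{F}_{s^*}'$ is continuous at $c^*$; together with the previous two steps this gives $\mathcal{F}_{s^*}\in C^1([0,1])$, proving the proposition.

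I anticipate the main obstacle to be this last step, together with the base case of the propagation argument: one must show that the seed extension can be, and is, arranged so that its one-sided derivatives already agree at the seam between the seed and its first image $F(K^1\cup K^2)$, i.e. at the points $y_1$ and $z_1$, since this single consistency condition is what the contraction then replicates at every scale. The quadratic-tip property and the identity $s_3^*=(s_2^*)^2$ from Lemma~\ref{lem24} are precisely what make the derivative scale by the contraction factor $s_2^*<1$ rather than by a factor $\ge 1$; without this the piece-slopes would not decay and no $C^1$ extension (let alone the eventual $C^{1+Lip}$ one) could exist.
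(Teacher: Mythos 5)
Your proposal is correct and takes essentially the same route as the paper: decompose $K$ into the seed $K^1\cup K^2$ and its $F$-images, arrange the chosen seed extension so one-sided slopes match at the seams $y_1,z_1$, propagate that matching to every junction $p_n$ via the affine self-similarity, and use $s_3^*/s_2^*=s_2^*<1$ (Lemma~\ref{lem24}) to force the slopes of $K^n$ to vanish, giving differentiability at the tip $c^*$. Your functional-equation formulation $\mathcal{F}_{s^*}\circ F_1=F_2\circ\mathcal{F}_{s^*}$ and the resulting factor $-s_2^*$ per annulus is just a cleaner restatement of the paper's geometric argument that the graph near $p_n$ is an $F$-iterate of the graph near $p_1$ or $p_2$.
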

\begin{proof}
\ Since $K^1$ and $K^2$ are the graph of $f_{s^*}|_{[z_1, y_{0}]}$ and $f_{s^*}|_{[z_0, y_{1}]},$ respectively, we obtain $K^{2n+1} = F^n (K^1) $ and $K^{2n+2} = F^n (K^2) $ for each $n \in \mathbb{N}.$  \\  Note that $K^n$ is the graph of a $C^1$ function defined 
\begin{eqnarray*}
 \textrm{on} \;\; &[y_{\frac{n-1}{2}},\; z_{\frac{n+1}{2}}] \;\;\; \textrm{if} \;\; n \in 4\mathbb{N}-1, \\
 \textrm{on}\;\; &[y_{\frac{n}{2}},\; z_{\frac{n}{2}-1}] \;\;\;\;\;\textrm{if} \;\; n \in 4\mathbb{N}, \\
 \textrm{on}\;\; &[z_{\frac{n+1}{2}},\; y_{\frac{n-1}{2}}] \;\;\;\textrm{if}\;\; n \in 4\mathbb{N}+1, \\
 \textrm{and on}\;\; &[z_{\frac{n}{2}-1},\; y_{\frac{n}{2}}]\;\;\;\;\; \textrm{if}\;\; n \in 4\mathbb{N}+2.
\end{eqnarray*}
 To prove the proposition, we have to check continuous differentiability at the points $p_n.$ Consider a neighborhoods $(y_1-\epsilon,y_1+\epsilon)$ around $y_1$ and $(z_1-\epsilon,z_1+\epsilon)$ around $z_1$, the slopes are given by an affine pieces of   $f_{s^*}$ on the subintervals $(y_1,y_1+\epsilon)$ and $(z_1-\epsilon,z_1)$ and the slopes are given by the chosen $C^1$ extension on $(y_1-\epsilon,y_1)$ and  $(z_1,z_1+\epsilon).$ This implies, $K^1$ and $K^2$ are $C^1$ at $p_2$ and $p_1,$ respectively. \\ Let $\Upsilon_1 \subset K $ be the graph over the interval $(z_1-\epsilon,z_1+\epsilon)$ and $\Upsilon_2 \subset K $ be the graph over the interval $(y_1-\epsilon,y_1+\epsilon),$ \\ then the graph $K$ locally around $p_n$ is equal to $\left\{\begin{array}{ll}
F^{\frac{n-1}{2}}{(\Upsilon_2)} \;\;\; \textrm{if}\; n \; \textrm{is odd} \\ F^{\frac{n-2}{2}}{(\Upsilon_1)} \;\;\; \textrm{if}\; n  \; \textrm{is even}
\end{array}
\right.$. \\ This implies, for $n \in \mathbb{N},$  $K^{2n-1}$ is $C^1$ at $p_{2n}$ and $K^{2n}$ is $C^1$ at $p_{2n-1}.$ \\ Hence $K$ is a graph of a $C^1$ function on $[0,1] \setminus \{c^*\}.$\\ From Lemma \ref{lem24}, we observe that the horizontal contraction of $F$ is smaller than the vertical contraction. This implies that the slope of $K^n$ tends to zero when $n$ is large. \\ Therefore, $K$ is the graph of a $C^1$ function on $[0,1].$
\end{proof}

\begin{prop}
\ Let $\mathcal{F}_{s^*}$ be the function whose graph is $K$ then $\mathcal{F}_{s^*}$ is a $C^{1+Lip}$ map with a quadratic tip. 
\end{prop}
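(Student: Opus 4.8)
The plan is to upgrade the $C^1$ conclusion of the previous proposition to $C^{1+Lip}$ by exploiting the self-similarity of $K$ under the scaling map $F$, the decisive quantitative input being the relation $(s_2^*)^2 = s_3^*$ from Lemma~\ref{lem24}. Writing $\mathcal{F}_{s^*}$ locally as a graph $y = g(x)$ and recalling that $F_1,F_2$ are affine with $F_1' = -s_2^*$ and $F_2' = -s_3^*$, I would first record how $F$ transforms slopes. If $G$ denotes the function whose graph is $F(\{y=g(x)\})$, then at corresponding points $X = F_1(x)$ one computes
$$G'(X) = \frac{F_2'}{F_1'}\, g'(x) = \frac{s_3^*}{s_2^*}\, g'(x) = s_2^*\, g'(x),$$
so that each application of $F$ multiplies slopes by $s_2^*$ (this re-derives the slope decay used in the $C^1$ argument).

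The key observation is the effect of $F$ on the Lipschitz seminorm of the derivative. Since the horizontal coordinate contracts by exactly $s_2^*$, for $X_i = F_1(x_i)$ we have $|X_1 - X_2| = s_2^*\,|x_1 - x_2|$, and therefore
$$\frac{|G'(X_1) - G'(X_2)|}{|X_1 - X_2|} = \frac{s_2^*\,|g'(x_1) - g'(x_2)|}{s_2^*\,|x_1 - x_2|} = \frac{|g'(x_1) - g'(x_2)|}{|x_1 - x_2|}.$$
Thus the Lipschitz constant of the derivative is \emph{invariant} under $F$. This is precisely where $(s_2^*)^2 = s_3^*$ is used: the slope contraction $s_3^*/s_2^*$ coincides with the horizontal contraction $s_2^*$ exactly when $s_3^* = (s_2^*)^2$; any other ratio would scale the Lipschitz constant by $s_3^*/(s_2^*)^2 \neq 1$ and either blow it up or force it to zero. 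Consequently, once the Lipschitz constant of the derivative on the two base pieces $K^1$ and $K^2$ is bounded by some $L$, the same bound $L$ holds on every piece $K^{2n+1} = F^n(K^1)$ and $K^{2n+2} = F^n(K^2)$.

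It then remains to treat the base pieces and the junctions. Each of $K^1, K^2$ is built from affine arcs of $f_{s^*}$ (on which $g'$ is constant) glued to the chosen extension across the finitely many gaps; I would fix that extension to have Lipschitz derivative matching the affine arcs $C^1$ at the gap endpoints, so each base piece has a finite Lipschitz-derivative constant $L$. At every interior junction $p_n$ the adjacent pieces already meet $C^1$ by the previous proposition, and one-sided Lipschitz bounds with matching derivatives chain (telescope over consecutive pieces) to give a two-sided bound; hence $g'$ is Lipschitz with constant $L$ on $[0,1]\setminus\{c^*\}$.

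The main obstacle is the critical point $c^*$, where infinitely many pieces accumulate. Here I would argue through the boxes $B_n = F^n(B_0)$: if $x$ lies in the generation-$n$ box then $|x - c^*| \asymp (s_2^*)^n$, while the slope scaling gives $|g'(x)| \lesssim (s_2^*)^n \asymp |x - c^*|$, using $g'(c^*) = 0$. Therefore $|g'(x) - g'(c^*)|/|x - c^*|$ stays bounded, so $g'$ is Lipschitz at $c^*$; combining this with the uniform bound off $c^*$ (splitting any pair straddling $c^*$ through the value $g'(c^*)=0$) yields a globally Lipschitz derivative, i.e.\ $\mathcal{F}_{s^*} \in C^{1+Lip}$. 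Finally, the quadratic tip follows from the corners $p_n$ lying on the parabola $u_{c^*}$: since $u_{c^*}(x) - u_{c^*}(c^*) = -\big(\frac{x - c^*}{1 - c^*}\big)^2$ and the graph is squeezed between consecutive such corners, one gets $\lim_{n\to\infty}\big(\mathcal{F}_{s^*}(y_n) - \mathcal{F}_{s^*}(c^*)\big)/(y_n - c^*)^2 = -1/(1-c^*)^2$, a quadratic tip with $l = 1/(1-c^*)^2$.
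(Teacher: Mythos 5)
Your proposal is correct and takes essentially the same route as the paper: the decisive step---that the Lipschitz seminorm of the derivative is invariant under the scaling map $F$ precisely because the slope factor $s_3^*/s_2^*$ equals the horizontal contraction $s_2^*$, i.e.\ $(s_2^*)^2 = s_3^*$ from Lemma~\ref{lem24}---is exactly the computation in Eqn.~(\ref{eqlp}) of the paper's proof. You additionally spell out the gluing of one-sided Lipschitz bounds at the junctions $p_n$, the Lipschitz estimate at the accumulation point $c^*$ via the slope decay $|g'(x)|\lesssim|x-c^*|$, and the explicit quadratic-tip limit, all of which the paper leaves implicit (it simply invokes the quadratic tip of $f_{s^*}$ and the uniform bound $\lambda_{n+1}\leq\lambda_n$ on the pieces).
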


\begin{proof}
\  As the function $\mathcal{F}_{s^*}$ is a $C^1$ extension of $f_{s^*}$ and $\left.f_{s^*}\right\vert_{D_s}$ has a quadratic tip, therefore $\mathcal{F}_{s^*}$ has a quadratic tip. We have to show that $K^n$ is the graph of a $C^{1+Lip}$ function 
 $$ \mathcal{F}_{s^*}^n: Dom(K^n) \rightarrow [0,1] $$ with an uniform Lipschitz bound. \\ That is,
  for $n \geq 1,$ 
  $$Lip((\mathcal{F}_{s^*}^{n+1})') \leq Lip((\mathcal{F}_{s^*}^n)')$$ 
 Let us assume that $\mathcal{F}_{s^*}^n$ is $C^{1+Lip}$ with Lipschitz constant $\lambda_n$ for its derivatives. We show that $\lambda_{n+1} \leq \lambda_{n}$. \\
 For given $(a,b)$ on the graph of $\mathcal{F}_{s^*}^n,$ there is $(\bar{a},\bar{b}) = F(a,b)$ on the graph of $\mathcal{F}_{s^*}^{n+1}$, this implies $$\mathcal{F}_{s^*}^{n+1}(\bar{a}) = 1 -s_3^*+s_3^* \cdot \mathcal{F}_{s^*}^n(a)$$
 Since $a = 1-\frac{\bar{a}-u_{c^*}(0)}{{s_2^*}},$ we have

$$\mathcal{F}_{s^*}^{n+1}(\bar{a}) = 1 -s_3^*+s_3^* \cdot \mathcal{F}_{s^*}^n\left(1-\frac{\bar{a}-u_{c^*}(0)}{{s_2^*}}\right)$$ Differentiate both sides with respect to $\bar{a}$, we get
$$(\mathcal{F}_{s^*}^{n+1})'(\bar{a}) = \frac{-s_3^*}{{s_2^*}}(\mathcal{F}_{s^*}^n)'\left(1-\frac{\bar{a}-u_{c^*}(0)}{{s_2^*}}\right)$$

\noindent Therefore,
\begin{eqnarray}\label{eqlp}
\fl |(\mathcal{F}_{s^*}^{n+1})'(\bar{a}_1) - (\mathcal{F}_{s^*}^{n+1})'(\bar{a}_2)| &= \bigg|\frac{-s_3^*}{{s_2^*}}\bigg|\cdot \bigg|(\mathcal{F}_{s^*}^n)'\left(1-\frac{\bar{a}_1-u_{c^*}(0)}{{s_2^*}}\right)-(\mathcal{F}_{s^*}^n)'\left(1-\frac{\bar{a}_2-u_{c^*}(0)}{{s_2^*}}\right)\bigg| \nonumber\\ 
&\leq \frac{s_3^*}{({s_2^*})^2} \cdot \lambda(\mathcal{F}_{s^*}^n)'|\bar{a}_1-\bar{a}_2|
\end{eqnarray}
 
\noindent From Lemma \ref{lem24}, we have $({s_2^*})^2 = {s_3^*}.$ Therefore,
 $$\lambda(\mathcal{F}_{s^*}^{n+1})' \leq \lambda(\mathcal{F}_{s^*}^{n})' \leq \lambda(\mathcal{F}_{s^*}^{1})'$$
 This completes the proof.   
\end{proof}

\noindent Note that $f_{s^*}$ is infinitely renormalizable piece-wise affine map and $\mathcal{F}_{s^*}$ is the $C^{1+Lip}$ extension of $f_{s^*}$ which is not a $C^2$ map . Then it implies that $\mathcal{F}_{s^*}$ is renormalizable. We observe that $R \mathcal{F}_{s^*}$ is an extension of $Rf_{s^*}.$ Therefore $R \mathcal{F}_{s^*}$ is renormalizable. Hence, $\mathcal{F}_{s^*}$ is infinitely renormalizable map. Then we have the following theorem,

\begin{thm}\label{thm:ext}
\ There exists a period tripling infinitely renormalizable $C^{1+Lip}$ unimodal map $\mathcal{F}_{s^*}$ with a quadratic tip such that $$ R \mathcal{F}_{s^*} = \mathcal{F}_{s^*} .$$
\end{thm}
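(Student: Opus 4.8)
The plan is to derive the statement as the culmination of the preceding results: the two propositions of this subsection, the piecewise-affine fixed-point equation $Rf_{s^*}=f_{s^*}$ from Proposition~\ref{thm1}, the self-similar structure carried by the scaling function $F$, and the scaling identity $(s_2^*)^2=s_3^*$ of Lemma~\ref{lem24}. By the two propositions, $\mathcal{F}_{s^*}$ is already known to be a $C^{1+Lip}$ unimodal map with a quadratic tip that extends $f_{s^*}$; and by the remark preceding the statement it is infinitely renormalizable, with $R\mathcal{F}_{s^*}$ an extension of $Rf_{s^*}$. So the only remaining content is the fixed-point equation $R\mathcal{F}_{s^*}=\mathcal{F}_{s^*}$. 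Since $h_{s^*,1}=F_1$, I would first reduce this, by applying $F_1$ to both sides of $F_1^{-1}\circ\mathcal{F}_{s^*}^3\circ F_1=\mathcal{F}_{s^*}$, to the conjugacy identity $\mathcal{F}_{s^*}^3\circ F_1=F_1\circ\mathcal{F}_{s^*}$ on $[0,1]$.

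The natural setting for the argument is the level of graphs. The construction $K=\bigcup_{m\ge 1}F^m(K^1\cup K^2)$ says exactly that $F(K)\subset K$, which, written for the function, is the one-step self-similarity $\mathcal{F}_{s^*}\circ F_1=F_2\circ\mathcal{F}_{s^*}$: a point $(x,\mathcal{F}_{s^*}(x))\in K$ is mapped by $F$ to $(F_1(x),F_2(\mathcal{F}_{s^*}(x)))$, and lying on $K$ forces $F_2(\mathcal{F}_{s^*}(x))=\mathcal{F}_{s^*}(F_1(x))$. The validity of this identity at the critical point, i.e.\ on the part of $[0,1]$ not covered by $D_{s^*}$, is precisely where $(s_2^*)^2=s_3^*$ enters, since it matches the vertical contraction $s_3^*$ to the square of the horizontal contraction $s_2^*$ dictated by the quadratic tip. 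Applying this self-similarity once gives $\mathcal{F}_{s^*}^3\circ F_1=\mathcal{F}_{s^*}^2\circ F_2\circ\mathcal{F}_{s^*}$.

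It then remains to identify $\mathcal{F}_{s^*}^2\circ F_2$ with $F_1$. Here I would use that $F_2$ maps $[0,1]$ affinely onto $I_3^1$, and that on $I_3^1\subset D_{s^*}$ the map $\mathcal{F}_{s^*}$ is affine and follows the itinerary $I_3^1\to I_1^1\to I_2^1$; hence $\mathcal{F}_{s^*}^2\circ F_2$ is an affine homeomorphism $[0,1]\to I_2^1$, exactly as $F_1$ is. That these two affine maps coincide is the content of $Rf_{s^*}=f_{s^*}$ restricted to the affine skeleton $D_{s^*}$ (Proposition~\ref{thm1}), which pins down the common endpoints and orientation. Combining the displays yields $\mathcal{F}_{s^*}^3\circ F_1=F_1\circ\mathcal{F}_{s^*}$, and therefore $R\mathcal{F}_{s^*}=\mathcal{F}_{s^*}$.

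I expect the main obstacle to be the passage from equality on the affine skeleton $D_{s^*}$ to equality on all of $[0,1]$. Agreement of $R\mathcal{F}_{s^*}$ and $\mathcal{F}_{s^*}$ on $D_{s^*}$ is immediate from Proposition~\ref{thm1}, but two distinct $C^{1+Lip}$ extensions could a priori still differ on the gaps and across the critical point. The relation $\mathcal{F}_{s^*}\circ F_1=F_2\circ\mathcal{F}_{s^*}$ is exactly what propagates the equality into every gap, and it is available only because $\sigma(s^*)=s^*$ forces a single scaling function $F$ to govern all levels; the identity $(s_2^*)^2=s_3^*$ is what makes this self-similarity compatible with the quadratic tip, ensuring the rescaled third iterate lands on the whole graph $K$ and not merely on its affine part.
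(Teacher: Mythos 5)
Your proposal is correct and takes essentially the same approach as the paper: the paper likewise deduces the theorem from the $F$-invariant graph construction $K=\bigcup F^m(K^1\cup K^2)$, the two propositions (the $C^1$ and $C^{1+Lip}$ properties, both resting on $(s_2^*)^2=s_3^*$ from Lemma~\ref{lem24}), and the observation that $R\mathcal{F}_{s^*}$ extends $Rf_{s^*}=f_{s^*}$. If anything, you go beyond the paper, which never writes out the verification of $R\mathcal{F}_{s^*}=\mathcal{F}_{s^*}$ but leaves it implicit in the self-similarity of $K$; your explicit chain $\mathcal{F}_{s^*}\circ F_1=F_2\circ\mathcal{F}_{s^*}$ together with $\mathcal{F}_{s^*}^2\circ F_2=F_1$ (pinned down by the piecewise-affine fixed-point equation of Proposition~\ref{thm1}) supplies exactly the missing step.
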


\subsection{Topological entropy of renormalization} \label{entropy}

\ In this section, we calculate the topological entropy of period tripling renormalization operator. \\ Let us consider three $C^{1+Lip}$ maps $\psi_0 : [0,y_1] \cup [z_1,1] \rightarrow [0,1], $  $\psi_1 : [0,y_1] \cup [z_1,1] \rightarrow [0,1],$ and $\psi_2 : [0,y_1] \cup [z_1,1] \rightarrow [0,1] $ which extend $f_{s^*}.$ For a sequence $\alpha = \{\alpha_n\}_{n \geq 1} \in \Sigma_3,$ \\ where $\Sigma_3 =  \{ \{x_n\}_{n \geq 1} : x_n \in \{0,\;1,\;2\} \}$ is called full 3-Shift. \\ Now define $$K^n(\alpha) = F^n(graph \; \psi_{{\alpha}_n}),$$ we have $$K(\alpha) =  \bigcup\limits_{n \geq 1} K^{n}(\alpha) .$$ Therefore, we conclude that $K(\alpha)$ is the graph of a $C^{1+Lip}$ map $f_\alpha$ having quadratic tip by using the same facts of subsection \ref{extsn}. \\ The shift map $\sigma : \Sigma_3 \rightarrow \Sigma_3$ is defined as $$\sigma(\alpha_1 \alpha_2 \alpha_3 \ldots) = (\alpha_2 \alpha_3 \alpha_4 \ldots).$$ 

\begin{prop}
The map $f_\alpha^3 : [y_1,\;z_1] \rightarrow [y_1,\;z_1] $ is a unimodal map for all $\alpha \in \Sigma_3.$ Furthermore, $f_\alpha$ is period tripling renormalizable and $R f_\alpha = f_{\sigma(\alpha)}.$
\end{prop}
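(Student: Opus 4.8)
The plan is to exploit the self-similar nature of the construction of $K(\alpha)$ and to reduce the conjugacy statement to the already-established identity $Rf_{s^*}=f_{s^*}$ of Proposition~\ref{thm1}. The decisive structural fact is that, over the central interval $I_2^1=[y_1,z_1]$, the graph $K(\alpha)$ is the $F$-image of the graph of $f_{\sigma(\alpha)}$: peeling off the first symbol in the defining union and using that $\sigma$ shifts the symbol sequence while $F$ advances the level by one, one gets $K(\alpha)\cap\bigl(I_2^1\times[0,1]\bigr)=F\bigl(K(\sigma(\alpha))\bigr)$. Writing $F=(F_1,F_2)$ with $F_1=h_{s^*,1}=\tilde{s}_2(1)$ the horizontal affine contraction onto $I_2^1$ and $F_2$ the vertical affine contraction onto $\hat{I}_2^1=I_3^1$, this reads, for $x\in[y_1,z_1]$,
\[
f_\alpha(x)=F_2\circ f_{\sigma(\alpha)}\circ F_1^{-1}(x).
\]
First I would record this identity carefully from the construction of subsection~\ref{extsn}, together with the complementary observation that each $\psi_i$ extends $f_{s^*}$: consequently $f_\alpha$ coincides with $f_{s^*}$ on the entire affine skeleton $D_{s^*}$, and in particular $f_\alpha|_{I_1^1}$ and $f_\alpha|_{I_3^1}$ are the affine pieces of $f_{s^*}$ and do not depend on $\alpha$.

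Granting this, I would verify the combinatorics. Since $f_\alpha$ agrees with $f_{s^*}$ off the central interval and $f_\alpha(I_2^1)=\hat{I}_2^1=I_3^1$ by the displayed identity, the period tripling pattern $I_2^1\to I_3^1\to I_1^1\to I_2^1$ of $f_{s^*}$ is inherited verbatim. This yields $f_\alpha^3\bigl([y_1,z_1]\bigr)\subseteq[y_1,z_1]$ and shows that $[y_1,z_1]$, $f_\alpha([y_1,z_1])$, $f_\alpha^2([y_1,z_1])$ have pairwise disjoint interiors, which are exactly the two conditions in the definition of renormalizability; hence $f_\alpha$ is period tripling renormalizable with renormalization interval $I_2^1$ and return time $3$. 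Moreover, on $[y_1,z_1]$ we may factor
\[
f_\alpha^3=\bigl(f_{s^*}|_{I_1^1}\bigr)\circ\bigl(f_{s^*}|_{I_3^1}\bigr)\circ f_\alpha|_{I_2^1}=A\circ F_2\circ f_{\sigma(\alpha)}\circ F_1^{-1},
\]
where $A=\bigl(f_{s^*}|_{I_1^1}\bigr)\circ\bigl(f_{s^*}|_{I_3^1}\bigr)\colon I_3^1\to I_2^1$ is affine. As $f_{\sigma(\alpha)}$ is a unimodal map with quadratic tip and $A$, $F_2$, $F_1^{-1}$ are affine homeomorphisms, $f_\alpha^3\colon[y_1,z_1]\to[y_1,z_1]$ is a unimodal map, its unique turning point being the quadratic tip $c^*$.

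It remains to identify the renormalization. By definition $Rf_\alpha=h_{s^*,1}^{-1}\circ f_\alpha^3\circ h_{s^*,1}=F_1^{-1}\circ f_\alpha^3\circ F_1$, so the factorization above gives
\[
Rf_\alpha=\bigl(F_1^{-1}\circ A\circ F_2\bigr)\circ f_{\sigma(\alpha)}.
\]
The affine map $A$ is built from the very same affine pieces of $f_{s^*}$ that arise when computing $Rf_{s^*}$, and the fixed point obeys the same self-similarity $f_{s^*}|_{I_2^1}=F_2\circ f_{s^*}\circ F_1^{-1}$ (the case of the constant symbol sequence, $\sigma(s^*)=s^*$). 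Running the identical computation for $f_{s^*}$ gives $Rf_{s^*}=\bigl(F_1^{-1}\circ A\circ F_2\bigr)\circ f_{s^*}$; since $Rf_{s^*}=f_{s^*}$ by Proposition~\ref{thm1} and $f_{s^*}$ is onto $[0,1]$, the affine map $F_1^{-1}\circ A\circ F_2$ equals the identity on $[0,1]$. (Equivalently one checks directly that $A\circ F_2=F_1$, both being the affine surjection $[0,1]\to I_2^1$ of the same orientation, which is where Lemma~\ref{lem24}, $({s_2^*})^2=s_3^*$, enters.) Substituting yields $Rf_\alpha=f_{\sigma(\alpha)}$, as required.

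I expect the main obstacle to be the first paragraph: making the self-similarity identity $f_\alpha|_{I_2^1}=F_2\circ f_{\sigma(\alpha)}\circ F_1^{-1}$ precise from the defining union for $K(\alpha)$, while simultaneously checking that the choice of extensions $\psi_{\alpha_n}$ only alters $f_\alpha$ inside the gaps of $D_{s^*}$ and never on the affine skeleton, so that the whole period tripling pattern and the affine map $A$ are genuinely independent of $\alpha$. Once that bookkeeping is in place, unimodality, renormalizability, and the conjugacy $Rf_\alpha=f_{\sigma(\alpha)}$ all follow formally, the last step being handed to us essentially for free by the fixed-point relation $Rf_{s^*}=f_{s^*}$.
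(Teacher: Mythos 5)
Your proposal is correct and takes essentially the same route as the paper: the paper's (very terse) proof is exactly the cycle $I_2^1 \to I_3^1 \to I_1^1 \to I_2^1$ with a unimodal-onto first leg and affine-onto second and third legs, giving renormalizability, after which it simply asserts that ``the above construction implies'' $Rf_\alpha = f_{\sigma(\alpha)}$ --- your write-up is a fleshed-out version of this, making explicit the self-similarity $f_\alpha|_{I_2^1} = F_2 \circ f_{\sigma(\alpha)} \circ F_1^{-1}$ and the affine-identity argument the paper leaves implicit. Two cosmetic blemishes that do not affect correctness: the piecewise-affine $f_{s^*}$ is not onto $[0,1]$ (its image is $\bigcup_{n}(I_1^n \cup I_2^n)$, but an affine map fixing two points is already the identity, so your conclusion stands), and the identity $A \circ F_2 = F_1$ follows from the orientation/surjectivity argument alone and does not actually require Lemma~\ref{lem24}.
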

\begin{proof}
\ We know that $f_\alpha :  [y_1,\;z_1] \rightarrow I_3^1 $ is a unimodal and onto, $f_\alpha :  I_3^1 \rightarrow I_1^1 $ is onto and affine and also $f_\alpha : I_1^1 \rightarrow  [y_1,\;z_1]  $ is onto and affine. Therefore $f_\alpha$ is renormalizable. The above construction implies $$R f_\alpha = f_{\sigma(\alpha)}.$$
\end{proof}

\noindent This gives us the following theorem.

\begin{thm} \label{prep2}
\ The period tripling renormalization operator $R$ defined on the space of $C^{1+Lip}$ unimodal maps has positive topological entropy.
\end{thm}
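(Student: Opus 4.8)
The plan is to exhibit a semi-conjugacy between the shift map on $\Sigma_3$ and the renormalization operator $R$ restricted to the family $\{f_\alpha : \alpha \in \Sigma_3\}$, and then invoke the variational characterization of topological entropy together with the known entropy of the full shift. The preceding proposition already furnishes the essential relation $R f_\alpha = f_{\sigma(\alpha)}$, which says precisely that the map $\alpha \mapsto f_\alpha$ intertwines $\sigma$ and $R$; the whole argument rests on turning this into a genuine injective continuous factor map.

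First I would topologize the relevant space. Each $f_\alpha$ is a $C^{1+Lip}$ unimodal map built from the boxes $F^n(\mathrm{graph}\,\psi_{\alpha_n})$, so I would equip the collection $\Lambda = \{f_\alpha : \alpha \in \Sigma_3\}$ with the $C^1$ (or uniform) topology and check that the coding map $\Phi : \Sigma_3 \to \Lambda$, $\Phi(\alpha) = f_\alpha$, is a homeomorphism onto its image. Continuity follows because changing $\alpha$ only in coordinates beyond index $N$ alters the graph $K(\alpha)$ only inside the box $B_N = F^N(B_0)$, whose size is controlled by $(s_2^*)^N$ and $(s_3^*)^N$ and hence shrinks geometrically (here I use the contraction estimates behind Lemma~\ref{lem24} and Remark~\ref{lem3}). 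Injectivity follows because distinct sequences differ in some coordinate $\alpha_n$, forcing the graphs to differ on the corresponding box, so $\Phi$ is a continuous bijection from the compact space $\Sigma_3$ onto $\Lambda$ and therefore a homeomorphism. Under $\Phi$ the identity $R f_\alpha = f_{\sigma(\alpha)}$ becomes $R \circ \Phi = \Phi \circ \sigma$, exhibiting $(\Lambda, R)$ as topologically conjugate to $(\Sigma_3, \sigma)$.

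With the conjugacy in hand, I would conclude by comparing entropies. Topological entropy is a conjugacy invariant and is monotone under taking invariant subsystems, so
\[
h_{\mathrm{top}}(R) \;\geq\; h_{\mathrm{top}}\!\left(R\big|_{\Lambda}\right) \;=\; h_{\mathrm{top}}(\sigma, \Sigma_3) \;=\; \log 3 \;>\; 0 ,
\]
which gives the claimed positivity. I would state explicitly that $\Lambda$ is an $R$-invariant compact subset of the space of $C^{1+Lip}$ unimodal maps, so that the restriction is legitimate and the monotonicity step applies.

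The main obstacle, and the part deserving genuine care rather than routine verification, is establishing that $\Phi$ is a homeomorphism in a topology for which $R$ is continuous. The delicate point is that $R$ involves rescaling by $h_{s,1}^{-1} \circ (\cdot)^3 \circ h_{s,1}$, so one must confirm that the metric chosen on $\Lambda$ makes $R$ continuous (equivalently, that $\sigma$-continuity transports correctly) and that the uniform Lipschitz bound on derivatives from the previous proposition holds across the whole family, guaranteeing $\Lambda$ is genuinely compact rather than merely the continuous image of a compact set under a possibly discontinuous map. Once the geometric shrinking of the boxes $B_n$ is used to control the $C^1$ distance between $f_\alpha$ and $f_{\alpha'}$ in terms of the first index where $\alpha$ and $\alpha'$ disagree, both continuity of $\Phi$ and continuity of $R$ on $\Lambda$ follow, and the remaining steps are standard.
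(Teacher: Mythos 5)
Your proposal is correct and takes essentially the same route as the paper: both use the injectivity of the coding $\alpha \mapsto f_\alpha$ together with the relation $R f_\alpha = f_{\sigma(\alpha)}$ to exhibit an $R$-invariant set $\Lambda$ on which $R$ is topologically conjugate to the full 3-shift, and then invoke conjugacy-invariance of entropy to get $h_{top}(R|_{\Lambda}) \geq \ln 3 > 0$. The paper's own proof is in fact far terser---it asserts the injectivity and the conjugacy without verifying the topology---so your checks of continuity of the coding map, compactness of $\Sigma_3$, and geometric shrinking of the boxes $B_n$ simply supply details the paper leaves implicit.
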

\begin{proof}
\ From the above construction, we conclude that $\alpha \longmapsto f_\alpha \in C^{1+Lip}$ is injective.  The domain of $R$ contains a subset $\Lambda$ on which $R$ is topological conjugate to the full 3-shift. As topological entropy $h_{top}$ is an invariant of topological conjugacy. Hence $h_{top}(\left.R\right\vert_{\Lambda} ) \geq \ln 3.$ 
\end{proof}

In fact, the topological entropy of period tripling operator $R$ on $C^{1+Lip}$ unimodal maps is unbounded because if we choose $n$ different $C^{1+Lip}$ maps, say, $\psi_0,\;\psi_1,\;\psi_2,\ldots \psi_{n-1},$ which extends $f_{s^*},$ then it will be embedded a full $n-\textrm{shift}$ in the domain of $R$. 

\subsection{An $\epsilon-$variation of the scaling data}\label{evar}
\ In this section, we use an $\epsilon$ variation on the construction of scaling data as presented in subsection ~\ref{p2} to obtain the following theorem.

\begin{thm}
\ There exists a continuum of fixed points of period tripling renormalization operator acting on $C^{1+Lip}$ unimodal maps.
\end{thm}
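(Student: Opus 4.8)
The plan is to construct, for each admissible perturbation parameter $\epsilon$, a modified scaling data $s^*_\epsilon : \mathbb{N} \rightarrow T_3$ whose associated piece-wise affine map $f_{s^*_\epsilon}$ is a fixed point of renormalization, and then to show that distinct values of $\epsilon$ produce genuinely distinct $C^{1+Lip}$ fixed points, so that the family $\{\mathcal{F}_{s^*_\epsilon}\}$ has the cardinality of the continuum. Concretely, I would first recall from the proof of Proposition~\ref{thm1} that a fixed point corresponds to a fixed point $c^*$ of the renormalization map $\mathcal{R}$ on the feasible domain $F_d$, and that once $c^*$ is fixed the constant scaling tri-factor $s^* = (S_1(c^*), S_2(c^*), S_3(c^*))$ is determined and satisfies $\sigma(s^*) = s^*$. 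The key relation driving the $C^{1+Lip}$ extension is the identity $(s_2^*)^2 = s_3^*$ from Lemma~\ref{lem24}, which ensures the horizontal contraction of $F$ dominates the vertical one and gives the uniform Lipschitz bound.

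The $\epsilon$-variation proceeds by relaxing the requirement that the scaling data be \emph{constant} in $n$. Instead of insisting $\sigma(s^*) = s^*$, I would allow a scaling data $s^*_\epsilon$ that is eventually periodic or asymptotically constant, perturbing the first few scaling tri-factors within an $\epsilon$-ball while keeping the data proper, i.e. maintaining $d(s(n), \partial T_3) \geq \epsilon'$ for some fixed $\epsilon' > 0$ and preserving the defining gap conditions $G_l > 0$, $G_r > 0$. The crucial point is that the fixed-point \emph{equation} $Rf_{s^*_\epsilon} = f_{s^*_\epsilon}$ only constrains the data through the shift relation established in Lemma~\ref{lem1}, namely $R_n f_s = f_{\sigma^n(s)}$; so one wants a one-parameter family of proper scaling data satisfying $\sigma(s^*_\epsilon) = s^*_\epsilon$ (or a fixed-point condition compatible with the renormalization), each yielding a legitimate fixed point. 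By introducing a small tangential perturbation in the directions along which the constraint $(s_2)^2 = s_3$ and the tip-matching condition leave freedom, I would exhibit an $\epsilon$-indexed curve of such data, and then invoke the extension machinery of subsection~\ref{extsn} verbatim to upgrade each $f_{s^*_\epsilon}$ to a $C^{1+Lip}$ map $\mathcal{F}_{s^*_\epsilon}$ with the dominated-contraction Lipschitz estimate~(\ref{eqlp}) still holding.

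The step I expect to be the main obstacle is verifying that the perturbed data genuinely remain \emph{fixed points} rather than merely proper: the renormalization operator acts as the shift on scaling data, so a literal fixed point forces $s^*_\epsilon$ to be constant, and a constant proper scaling data is pinned down by the single scalar equation $\mathcal{R}(c) = c$, which has only the isolated solution $c^*$. Hence the continuum cannot come from constant data alone. The resolution is to enlarge the class of allowed scalings: rather than fixing the critical exponent $\alpha = 2$ and the quadratic profile $u_c$, the $\epsilon$-variation should vary the \emph{shape} of the affine-extension data (or equivalently introduce an extra scalar degree of freedom in how $\tilde{s}_1, \tilde{s}_2, \tilde{s}_3$ are chosen) so that for each $\epsilon$ there is a distinct proper scaling datum with its own fixed point $c^*(\epsilon)$, all satisfying the structural identity needed for the $C^{1+Lip}$ extension. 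I would therefore spend the bulk of the argument on an implicit-function-theorem or continuity argument showing that the fixed-point condition defines a codimension-one locus in the enlarged parameter space, that this locus is a one-dimensional continuum, and that the map $\epsilon \mapsto \mathcal{F}_{s^*_\epsilon}$ is injective because distinct data produce distinct scaling ratios on the invariant Cantor set. Injectivity is the clean part, following as in Theorem~\ref{prep2}; establishing that the fixed-point locus is actually a continuum, while keeping every datum proper and Lipschitz-extendable, is where the real work lies.
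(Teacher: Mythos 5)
Your diagnosis of the obstacle is exactly right, and it is the most valuable part of the proposal: a genuine fixed point of $R$ forces $\sigma(s)=s$, hence constant scaling data, and within the construction of subsection~\ref{p2} (every affine branch interpolating $u_c$ at both endpoints) constant proper data is pinned down by the single scalar equation $\mathcal{R}(c)=c$, whose solution $c^*$ is isolated. But your next inference --- ``hence the continuum cannot come from constant data alone'' --- is what steers you off the paper's route, and your proposal never recovers: you then look for the extra degree of freedom in the wrong place (varying the exponent $\alpha$ or the profile $u_c$, or an abstract codimension-one locus argument) and explicitly defer the construction (``where the real work lies''). The paper's continuum \emph{does} come from constant data, with the same quadratic family $u_c$ and $\alpha=2$; what is relaxed is a single endpoint-interpolation condition. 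Concretely, the anchor point $(u_c^2(0),\,u_c^3(0))$ on the graph is replaced by $(u_c^2(0),\,\epsilon\cdot u_c^3(0))$, so the affine branch on $I_3^n$ is allowed to miss the graph of $u_c$ at one endpoint by the factor $\epsilon$. This produces explicit data $s_1(c,\epsilon)=\epsilon\, u_c^3(0)$, $s_2(c,\epsilon)=u_c(\epsilon\, u_c^3(0))-u_c(0)$, $s_3(c,\epsilon)=1-u_c^2(0)$, and an explicit map $\mathcal{R}(c,\epsilon)=\bigl(u_c(\epsilon\, u_c^3(0))-c\bigr)/s_2(c,\epsilon)$. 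Since $c^*$ is an expanding (transverse) fixed point of $\mathcal{R}(\cdot,1)$, each $\epsilon$ near $1$ yields a unique fixed point $c_\epsilon^*$; the resulting data is constant in $n$, so $\sigma(s_\epsilon^*)=s_\epsilon^*$ and Lemma~\ref{lem1} gives $Rf_{s_\epsilon^*}=f_{s_\epsilon^*}$ exactly --- no implicit-function-theorem scaffolding is needed --- and injectivity of $\epsilon\mapsto c_\epsilon^*$ (the monotonicity in Remark~\ref{ob1}) makes the family a genuine continuum after applying the extension of subsection~\ref{extsn}.

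Beyond the missing construction, two steps of your plan would actually fail or remain unjustified. First, varying $\alpha$ breaks the Lipschitz extension on one side: the identity of Lemma~\ref{lem24} becomes $(s_2)^{\alpha}=s_3$, so the contraction factor in estimate~(\ref{eqlp}) becomes $s_3/(s_2)^2=s_2^{\alpha-2}$, which exceeds $1$ for $\alpha<2$; the Lipschitz constants $\lambda_n$ then grow geometrically and the extension is $C^1$ but not $C^{1+Lip}$, so half of that putative continuum leaves the space in which the theorem lives. Second, for \emph{any} admissible perturbation one must verify that the structural identity $(s_2)^2=s_3$ (or at least a uniform bound on $s_3/s_2^2$) survives; you correctly flag this as necessary but never check it. In the paper's family it survives for a precise reason: the fixed-point equation $\mathcal{R}(c_\epsilon^*,\epsilon)=c_\epsilon^*$ says $z_1-c=c\,s_2$ with $z_1=u_c(\epsilon\,u_c^3(0))$, $y_1=u_c(0)$, hence $c-y_1=(1-c)s_2$, i.e. $s_2=(c-y_1)/(1-c)$, while $s_3=1-u_c^2(0)=\bigl((y_1-c)/(1-c)\bigr)^2$; therefore $(s_2)^2=s_3$ at every perturbed fixed point, and the Lipschitz estimate goes through verbatim. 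Exhibiting the explicit one-parameter family and making this verification is the proof; your proposal identifies where the proof must happen but does not supply it.
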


\begin{proof}
 Consider an $\epsilon$ variation on scaling data and we modify the construction which is described in subsection ~\ref{p2}. This modification is illustrated in Figure \ref{fig:ChaoScale}. 

Define a neighborhood $N_{\epsilon}$ about the point $(u_{c}^2(0),\; u_{c}^3(0))$  as 
$$N_{\epsilon}(u_{c}^2(0),\; u_{c}^3(0)) = \{ (u_{c}^2(0),\; \epsilon \cdot u_{c}^3(0)) \; : \; \epsilon > 0 \; \textrm{ and} \; \epsilon \; \textrm {close to } 1  \}.$$  
\FloatBarrier
\begin{figure}[h]
\centering
{\includegraphics [width=90mm]{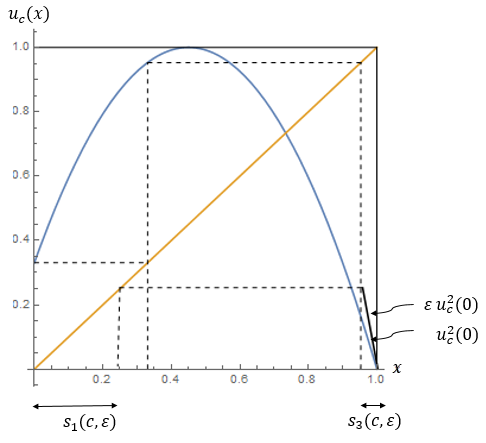}}
\caption{$\epsilon$- perturbation of $s_3(c).$ }
\label{fig:ChaoScale}
\end{figure} 
\FloatBarrier

From above Figure \ref{fig:ChaoScale}, the scaling data is obtained as
\begin{eqnarray*}
s_3(c,\epsilon) &= 1- u_{c}^2(0) \\ 
s_1(c,\epsilon) &= \epsilon \cdot u_{c}^3(0) \\
s_2(c,\epsilon) &= u_{c}(\epsilon \cdot u_{c}^3(0)) - u_{c}(0)
\end{eqnarray*}
where $c \in (0, \frac{1}{2}).$ Also, we define
\begin{eqnarray*}
\mathcal{R}(c,\epsilon) &= \frac{u_{c}(\epsilon \cdot u_{c}^3(0)) - c}{s_2(c,\epsilon)}.
\end{eqnarray*}

From subsection~\ref{p2}, we know that the period tripling renormalization operator $\mathcal{R}$ has unique fixed point $c^*$. Consequently, for each $\epsilon,$ $\mathcal{R}(c,\epsilon)$ has only one unstable fixed point, namely $c_{\epsilon}^*.$ Therefore, we consider the perturbed scaling data ${s}_{\epsilon}^* : \mathbb{N} \rightarrow T_3$ with 
 \begin{eqnarray*}
 {s}_{\epsilon}^*(n) = (\epsilon \cdot u_{c_{\epsilon}^*}^3(0),\; u_{c_{\epsilon}}(\epsilon \cdot u_{c_{\epsilon}^*}^3(0))-u_{c_{\epsilon}^*}(0),\; 1-u_{c_{\epsilon}^*}^2(0)).
 \end{eqnarray*}
 
\noindent Then $\sigma(s_{\epsilon}^* ) = {s}_{\epsilon}^* $ and using Lemma \ref{lem1}, we have
   $$Rf_{{s}_{\epsilon}^*} = f_{{s}_{\epsilon}^*}. $$
   
Moreover, $f_{{s}_{\epsilon}^*}$ is a piece-wise affine map which is infinitely renormalizable. Now we use similar extension described in subsection~\ref{extsn}, then we get $\mathcal{F}_{{s}_{\epsilon}^*}$ is the $C^{1+Lip}$ extension of $f_{{s}_{\epsilon}^*}$. This implies that $\mathcal{F}_{{s}_{\epsilon}^*}$ is a renormalizable map. As $R \mathcal{F}_{{s}_{\epsilon}^*}$ is an extension of $Rf_{{s}_{\epsilon}^*}.$ Therefore $R \mathcal{F}_{{s}_{\epsilon}^*}$ is renormalizable. Hence, for each $\epsilon$ close to $1,$ $\mathcal{F}_{{s}_{\epsilon}^*}$ is a fixed point of period tripling renormalization. This proves the existence of a continuum of fixed points of renormalization.  
\end{proof}

\begin{rem}\label{ob1}
\begin{itemize}
\item[(i)] When $\epsilon = 1,$ the fixed point of $\mathcal{R}(c,\epsilon)$ coincides with the fixed point of $\mathcal{R}(c)$ which is described in subsection \ref{p2}.
\item[(ii)] On the other hand, we have the following relations 
\begin{itemize}
\item[(i)] if $\epsilon < 1,$ then 
   $$ c^* < c_{\epsilon}^*,  $$
\item[(ii)]
    if $\epsilon > 1,$ then 
   $$ c_{\epsilon}^* <  c^*,  $$
\item[(iii)] if $\epsilon_0 < \epsilon_1,$ then 
   $$ c_{\epsilon_1}^* < c_{\epsilon_0}^*,  $$
\end{itemize} 

\end{itemize}
\end{rem}

The above relations $(i)$ and $(ii)$ are illustrated in Figure \ref{fig:cplt} by plotting the graphs of $\mathcal{R}(c,\epsilon)$ as a thick line and $\mathcal{R}(c)$ as a dotted line.
\begin{figure}[htb]
  \centering
  \begin{subfigure}[b]{0.5\linewidth}
    \centering\includegraphics[scale=0.6]{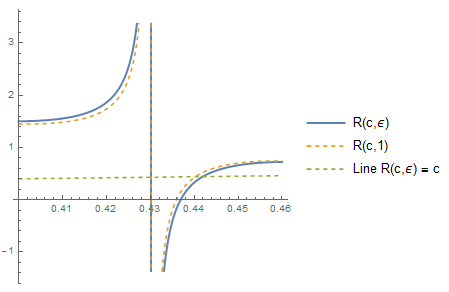}
    \caption{\label{fig:cfig1}}
  \end{subfigure}%
  \begin{subfigure}[b]{0.5\linewidth}
    \centering\includegraphics[scale=0.6]{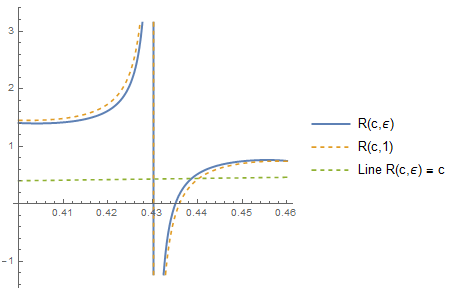}
    \caption{\label{fig:cfig2}}
  \end{subfigure}
  \caption{(\subref{fig:cfig1}): $\mathcal{R}(c, \epsilon)$ and $\mathcal{R}(c)$ for $\epsilon < 1$ and (\subref{fig:cfig2}): $\mathcal{R}(c, \epsilon)$ and $\mathcal{R}(c)$ for $\epsilon > 1.$ }
 
  \label{fig:cplt}
\end{figure}


\begin{Lem}\label{HorL}
\ There exists $\rho > 0$ such that  $$\frac{1}{\rho} \leq \frac{|\hat{I}_2^n|}{|I_2^n|^2}  \leq \rho.$$
\end{Lem}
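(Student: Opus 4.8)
The plan is to reduce the claimed two-sided bound to the exact self-similar geometry of the fixed point, where the ratio turns out to be essentially constant in $n$. First I would record the two scaling identities already available from the construction. Since $I_2^n = h_{s^*,n}([0,1])$ is obtained by an $n$-fold composition of the affine map $\tilde{s}_2$, whose contraction factor is $s_2^*$, one has $|I_2^n| = (s_2^*)^n$; and from the argument in the proof of \cref{lem24} the images satisfy $|\hat{I}_2^{n+1}|/|\hat{I}_2^n| = s_3^*$ with $|\hat{I}_2^0| = 1$, whence $|\hat{I}_2^n| = (s_3^*)^n$.

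With these in hand the ratio is immediate:
$$\frac{|\hat{I}_2^n|}{|I_2^n|^2} = \frac{(s_3^*)^n}{(s_2^*)^{2n}} = \left(\frac{s_3^*}{(s_2^*)^2}\right)^n.$$
The crucial input is then \cref{lem24}, which asserts $(s_2^*)^2 = s_3^*$; substituting this makes the ratio identically equal to $1$ for every $n$, so the lemma holds with $\rho = 1$. I would present this as the main line of argument, since it is exact and requires no estimation.

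To make the bound robust --- for instance for the $\epsilon$-perturbed scaling data $s_\epsilon^*$ of subsection~\ref{evar}, where $(s_2)^2 = s_3$ need not hold on the nose --- I would give an alternative geometric route through the quadratic tip. Because $f_{s^*}$ agrees with $u_{c^*}$ at the endpoints and $u_{c^*}(x) = 1 - \left(\frac{x-c^*}{1-c^*}\right)^2$, the image length is
$$|\hat{I}_2^n| = 1 - u_{c^*}(y_n) = \frac{(y_n - c^*)^2}{(1-c^*)^2},$$
so that
$$\frac{|\hat{I}_2^n|}{|I_2^n|^2} = \frac{1}{(1-c^*)^2}\left(\frac{y_n - c^*}{|I_2^n|}\right)^2.$$
Since $c^* \in I_2^n = [y_n, z_n]$, the factor $|y_n - c^*|/|I_2^n|$ is automatically bounded above by $1$, and taking $\rho$ to be the maximum of the resulting upper bound and the reciprocal of a positive lower bound for this factor would finish the estimate.

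The one step that is not purely formal is the last one: showing that the relative position $|y_n - c^*|/|I_2^n|$ stays bounded away from $0$ uniformly in $n$, i.e.\ that the critical point does not drift toward an endpoint of $I_2^n$ as $n \to \infty$. For the exact fixed point this is immediate, since self-similarity places $c^*$ at the same relative position inside every $I_2^n$; the genuine content --- and the main obstacle if one insists on a single $\rho$ that also covers the $\epsilon$-variation --- is to check that this relative position varies continuously with $\epsilon$ and remains in a compact subinterval of $(0,1)$ as $\epsilon$ ranges near $1$. This follows from the properness condition $d(s(n), \partial T_3) \geq \epsilon$ together with the continuity of $c_\epsilon^*$ in $\epsilon$ from subsection~\ref{evar}.
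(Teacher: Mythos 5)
Your proposal is correct, and your ``alternative geometric route'' is in fact the paper's own proof: the paper computes $|\hat{I}_2^n| = \frac{(y_n-c)^2}{(1-c)^2}$ from the quadratic map, asserts a two-sided bound $\frac{1}{\rho_0} \leq \frac{|c-y_n|}{|I_2^n|} \leq \rho_0$ on the relative position of the critical point inside $I_2^n$ (justified by the critical points $c(\sigma^n\alpha)$ lying in the compact interval $[c_0^*,c_2^*]$), and then takes $\rho = \rho_0^2/(1-c)^2$. Where you differ is in emphasis, and the difference matters: your headline argument, the exact identity $|\hat{I}_2^n|/|I_2^n|^2 = \bigl(s_3^*/(s_2^*)^2\bigr)^n \equiv 1$ via \cref{lem24}, is valid only for the unperturbed fixed point $f_{s^*}$. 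But \cref{HorL} sits in subsection~\ref{evar} and is invoked in the theorem immediately following it, where the relevant maps are the horseshoe maps $f_\alpha$, $\alpha \in \Sigma_3$, built from level-dependent scaling data $s(n) = \bigl(s_i(c(\sigma^n\alpha),\epsilon_{\alpha_n})\bigr)_{i=1,2,3}$; there $|I_2^n|$ and $|\hat{I}_2^n|$ are products of non-constant ratios, the identity $(s_2)^2 = s_3$ of \cref{lem24} is unavailable, and the ratio $|\hat{I}_2^n|/|I_2^n|^2$ is genuinely non-constant. So the exact computation, while true, says nothing about the case the lemma exists for; in a final write-up the geometric argument must be the proof rather than the fallback. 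You recognized this and supplied the right argument, so the mathematical content is all there.

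One further remark: the step you single out as the ``genuine content'' --- a uniform lower bound on $|y_n-c|/|I_2^n|$ --- is actually automatic and needs neither self-similarity nor continuity in $\epsilon$. Since the proof identifies $\hat{I}_2^n = u_c(I_2^n) = [u_c(y_n),1]$, the label $y_n$ is forced to be the endpoint of $I_2^n$ farther from $c$ (otherwise $u_c(z_n)$ would be the lower endpoint of the image), whence $\frac{1}{2} \leq \frac{|y_n-c|}{|I_2^n|} \leq 1$. Combined with $c \in (0,\frac{1}{2})$ this yields $\frac{1}{4} \leq \frac{|\hat{I}_2^n|}{|I_2^n|^2} \leq 4$, i.e.\ the lemma holds with $\rho = 4$ uniformly over the whole horseshoe family, which is cleaner than either your compactness argument or the paper's appeal to $c(\sigma^n\alpha) \in [c_0^*,c_2^*]$.
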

\begin{proof}
\ We have 

$I_2^n=  \left\{\begin{array}{ll}
\;[y_n, z_n], \;\;\; \textrm{for} \;\; n = 1,3,5,... \\ \ [z_n, y_n], \; \;\;\textrm{for} \;\; n = 2,4,6,... 
\end{array}
\right.$ 

and
 
$I_3^n=  \left\{\begin{array}{ll}
\;[w_n, y_{n-1}], \;\;\; \textrm{for} \;\; n = 1,3,5,... \\ \ [y_{n-1}, w_n], \; \;\;\textrm{for} \;\; n = 2,4,6,... 
\end{array}
\right.$


where $y_n$ $z_n$ and $w_n$ are defined in subsection \ref{p2}.

\begin{figure}[!htb]
\centering
{\includegraphics [width=120mm]{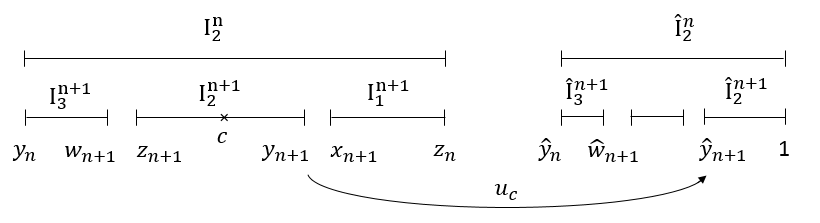}}
\caption{}
\label{fig:hor1}
\end{figure}

Let

$$
\hat{I}_2^n = u_c({I}_2^n ) = [u_c(y_n),1]  =  [\hat{y}_n,1].
$$
and  $$\hat{I}_3^n = [\hat{y}_n,\hat{w}_{n+1}] \subset \hat{I}_2^n $$ such that $$|\hat{I}_3^{n+1}| = s_2(n) |\hat{I}_2^n|.$$
Define an affine homeomorphism $f_{\alpha} : {I}_3^{n+1} \rightarrow \hat{I}_3^{n+1}$ such that $$f_\alpha(y_n) = u_c(y_n) = \hat{y}_n.$$
Also, we have $c(n) = c(\sigma^n\alpha) \in [c_0^*,c_2^*]$ which is a small  neighborhood of $c^*.$ This implies, there exists $\rho_0  > 0$ such that 
 \begin{equation}
 \frac{1}{\rho_0} \leq \frac{|c-y_n|}{|I_2^n|} \leq  \rho_0. \label{eqh1}
 \end{equation}
  Then 
\begin{eqnarray}
\frac{|\hat{I}_2^n|}{|I_2^n|^2} =  \frac{|[u_c(y_n),1]|}{|I_2^n|^2} = \frac{1}{|I_2^n|^2}\frac{(y_n-c)^2}{(1-c)^2} \label{eqh2}
\end{eqnarray} 
From Eqns. (\ref{eqh1}) and (\ref{eqh2}), we have 
\begin{eqnarray*}
 \frac{1}{\rho_0^2 (1-c)^2} \leq \frac{|\hat{I}_2^n|}{|I_2^n|^2}  \leq \frac{\rho_0^2}{(1-c)^2}. 
\end{eqnarray*}
Since $(1-c)^2 \leq \frac{1}{(1-c)^2}$, choose $\rho = \frac{\rho_0^2}{(1-c)^2}.$ Then we have 
$$ \frac{1}{\rho} \leq \frac{|\hat{I}_2^n|}{|I_2^n|^2}  \leq \rho. $$
\end{proof}

\begin{thm}
\ There exists an infinitely renormalizable $C^{1+Lip}$ unimodal map $k$ with quadratic tip such that $\{c_n\}_{n \geq 0},$ where $c_n$ is the critical point of $R^nk,$ is dense in a Cantor set.
\end{thm}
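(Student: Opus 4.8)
The plan is to realize the sequence of critical points as the forward orbit of a point whose shift orbit is dense, and then push this orbit through the $C^{1+Lip}$ extension machinery already developed. First I would set up a non-autonomous version of the critical-point dynamics. Using the $\epsilon$-variation of subsection~\ref{evar}, fix finitely many values $\epsilon_0,\dots,\epsilon_{m-1}$ ($m\ge 2$) close to $1$ and write $G_i(c)=\mathcal{R}(c,\epsilon_i)$ for the corresponding critical-point maps; by subsection~\ref{evar} each $G_i$ is $C^1$ with a unique, expanding fixed point $c_{\epsilon_i}^*$, and by Remark~\ref{ob1} these fixed points vary monotonically in $\epsilon$ and all lie in the small neighbourhood $J:=[c_0^*,c_2^*]$ of $c^*$ appearing in Lemma~\ref{HorL}. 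The first thing to verify is a covering statement: that $J$ may be chosen so that each $G_i$ maps $J$ expandingly over $J$, i.e.\ $G_i(J)\supseteq J$. Granting this, for every one-sided sequence $\omega=(\omega_0,\omega_1,\dots)\in\Sigma_m$ the nested sets $\bigcap_{N\ge 1}(G_{\omega_{N-1}}\circ\cdots\circ G_{\omega_0})^{-1}(J)$ shrink, by expansion, to a single point $c(\omega)\in J$, the unique start whose whole itinerary orbit $c_0=c(\omega),\ c_{n+1}=G_{\omega_n}(c_n)$ stays in $J$.

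The coding map $c:\Sigma_m\to J$ built this way is continuous and injective, so $\mathcal{C}:=c(\Sigma_m)$ is a Cantor set, and by construction $c(\sigma\omega)=G_{\omega_0}(c(\omega))$, i.e.\ the coding semiconjugates the shift $\sigma$ to the critical-point dynamics. Each $\omega$ also determines a scaling data $s_\omega:\mathbb{N}\to T_3$ by using rule $\epsilon_{\omega_n}$ at level $n$; since all $c_n\in J\subset F_d$, this $s_\omega$ is uniformly proper, and hence gives a piecewise-affine infinitely renormalizable map $f_{s_\omega}$ with $Rf_{s_\omega}=f_{s_{\sigma\omega}}$ by Lemma~\ref{lem1}, whose critical point is $c(\omega)$.

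Next I would extend $f_{s_\omega}$ to a $C^{1+Lip}$ unimodal map $\mathcal{F}_\omega$ with quadratic tip, repeating verbatim the box-and-scaling-function construction of subsection~\ref{extsn}, so that $R\mathcal{F}_\omega=\mathcal{F}_{\sigma\omega}$. The one point that is no longer free is the uniform Lipschitz control of the derivatives: the exact identity $(s_2^*)^2=s_3^*$ of Lemma~\ref{lem24}, which drove the inductive estimate $\lambda_{n+1}\le\lambda_n$ in the fixed-point case, now fails because the scaling factors change with $n$. This is exactly where Lemma~\ref{HorL} enters: since every $c_n=c(\sigma^n\omega)$ stays in $J$, the two-sided bound $\frac{1}{\rho}\le |\hat{I}_2^n|/|I_2^n|^2\le\rho$ replaces the exact relation in the estimate~(\ref{eqlp}) and produces a single Lipschitz constant for $(\mathcal{F}_\omega^n)'$, uniform in both $n$ and $\omega$. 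Hence each $\mathcal{F}_\omega$ is genuinely $C^{1+Lip}$ with quadratic tip.

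Finally I would take $\omega^*\in\Sigma_m$ to be a transitive point of the shift (for instance the concatenation of an enumeration of all finite words), so that $\{\sigma^n\omega^*\}_{n\ge 0}$ is dense in $\Sigma_m$, and set $k=\mathcal{F}_{\omega^*}$. Then $R^n k=\mathcal{F}_{\sigma^n\omega^*}$ has critical point $c_n=c(\sigma^n\omega^*)$, and continuity of $c(\cdot)$ gives that $\{c_n\}_{n\ge 0}$ is dense in $c(\overline{\{\sigma^n\omega^*\}})=c(\Sigma_m)=\mathcal{C}$, a Cantor set, which is the assertion. I expect the main obstacle to be the uniform-Lipschitz step, namely combining the invariance of $J$ under the expanding family with Lemma~\ref{HorL} to extract one Lipschitz constant for the whole family $\{\mathcal{F}_\omega\}$; the secondary technical points are verifying the covering property $G_i(J)\supseteq J$ and the continuity and injectivity of the coding $\omega\mapsto c(\omega)$.
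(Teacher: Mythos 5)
Your outline follows the paper's own route almost step for step: the $\epsilon$-variation producing a family of expanding critical-point maps, a symbolic coding satisfying $c(\sigma\omega)=\mathcal{R}(c(\omega),\epsilon_{\omega_0})$, scaling data built from $(c(\sigma^n\omega),\epsilon_{\omega_n})$, a $C^{1+Lip}$ extension whose uniform Lipschitz bound comes from Lemma~\ref{HorL} in place of the exact identity $(s_2^*)^2=s_3^*$ of Lemma~\ref{lem24}, and finally a sequence with dense shift orbit. The one place where you deviate is precisely where your argument has a genuine gap: the construction of the Cantor set itself.

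You let every $G_i=\mathcal{R}(\cdot,\epsilon_i)$ act on the whole interval $J$, with the symbol chosen freely at each step, and you assert that the coding $c:\Sigma_m\to J$ is ``continuous and injective, so $\mathcal{C}=c(\Sigma_m)$ is a Cantor set,'' relegating injectivity to a secondary verification. But in this non-autonomous setup $\mathcal{C}$ is the attractor of the system of inverse branches, $\mathcal{C}=\bigcup_i G_i^{-1}(\mathcal{C})$, and without a separation condition on the intervals $G_i^{-1}(J)$ neither injectivity nor total disconnectedness follows: when branches overlap, distinct sequences can code the same point, and the image can even be a full interval (the system $\left\{x/2,\ x/2+1/4,\ x/2+1/2\right\}$ on $[0,1]$ has attractor $[0,1]$). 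Your stated hypotheses cannot rule this out: the only quantitative input is the expansion bound $\partial\mathcal{R}/\partial c>2$, which bounds each $|G_i^{-1}(J)|$ only by something slightly less than $|J|/2$; for $m\ge 3$ maps (your $J=[c_0^*,c_2^*]$ suggests $m=3$) three such intervals inside $J$ may be forced to overlap, so the covering property $G_i(J)\supseteq J$ plus expansion is simply not enough. This is exactly why the paper does not run the maps on all of $J$: it builds an honest horseshoe, choosing pairwise \emph{disjoint} subintervals $A_0=[c_0^*,a_0]$, $A_1=[a_1,b_1]$, $A_2=[a_2,c_2^*]$ on which $R_i:A_i\to[c_0^*,c_2^*]$ are expanding diffeomorphisms onto the full interval; disjointness of the domains is what makes the itinerary coding a homeomorphism onto the invariant set and makes that set a Cantor set. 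To repair your proof you must either restrict the branches to disjoint domains in the same way, or explicitly verify the separation of the preimages $G_i^{-1}(J)$ for the actual map $\mathcal{R}$ --- this is a property of its (numerically strong) expansion, not a consequence of what you assumed. A minor further inaccuracy: $R^nk$ need not equal $\mathcal{F}_{\sigma^n\omega^*}$ exactly, since the gap-filling pieces of $R^nk$ are inherited from deeper levels of $k$ rather than freely chosen; what the construction gives (and all you need) is $\mathrm{graph}(R^nk)\supset K^n$, which already identifies the critical point of $R^nk$ as $c(\sigma^n\omega^*)$.
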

\begin{proof}
From subsection \ref{p2}, we conclude that the map $R$ has one fixed point $c^*$ which is expanding. We choose $\epsilon_0 >\epsilon_1 > \epsilon_2$ close to $1.$ Then $R(c,\epsilon_0),$ $R(c,\epsilon_1)$ and $R(c,\epsilon_2)$ have the expanding fixed points $c_0^*,$ $c_1^*$ and $c_2^*,$ respectively. In fact, $$ \frac{\partial R}{\partial c}(c_i^*,\epsilon_i) > 2, \; \; \; \textrm{for each}\; i\; = \;0,\;1,\; 2.$$ From Remark \ref{ob1}$(iii)$, we have $c_0^* < c_1^*,  <c_2^*. $ Therefore, there exists three intervals $A_0 = [c_0^*, \;a_0],$ $A_1 = [a_1, \;b_1]$ and $ A_2 = [a_2,\; c_2^*],$ such that the maps $$R_i : A_i  \rightarrow [c_0^*,\;c_2^*] \supset A_i, \;\; \;\; \textrm{for}\; i=0,1,2,$$ 
are expanding diffeomorphisms, 
where $R_i(c) = R(c,\epsilon_i).$ 
 Therefore, we get a horseshoe map. 
We use the following coding for the invariant Cantor set of the horseshoe map $$ c : \Sigma_3 \rightarrow [c_0^* , c_2^*] $$ with $$ c\left(\sigma(\alpha)\right) =  \mathcal{R}\left(c(\alpha),  \epsilon_{\alpha_0} \right).$$
Given a sequence $\alpha \in \Sigma_3,$ the proper scaling data $ s: \mathbb{N}  \rightarrow \Delta^3 $ is defined as 
$$s(n) = \left(  s_1(c(\sigma^n \alpha),\epsilon_{\alpha_n}),\; s_2(c(\sigma^n \alpha),\epsilon_{\alpha_n}),\; s_3(c(\sigma^n \alpha),\epsilon_{\alpha_n})   \right).$$ Consequently, we define a piece-wise affine map $$ f_\alpha : D_s \rightarrow [0,1].$$
Define $ F^n : [0,1]^2 \rightarrow [0,1]^2 $ by
 
$$
F^n\left(\begin{array}{ll}
x \\ y
\end{array} 
\right) =  \left(\begin{array}{ll}
F_1^n(x) \\ F_2^n(y)
\end{array} 
\right)
$$
where $F_2^n$ be the affine orientation preserving homeomorphism and $F_1^n$ be the affine homeomorphism with $F_1^n(1) = y_n.$ Therefore, the image of $F^n$ is $B_n.$     

\begin{figure}[!htb]
\centering
{\includegraphics [width=70mm]{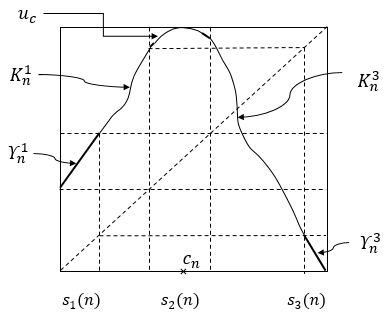}}
\caption{}
\label{fig:horext}
\end{figure}

Let $K^n = (F^n)^{-1}(graph\;f_\alpha) = \gamma_n^1 \cup \gamma_n^3.$ This is the graph of $f_{\alpha_n}.$ We extend the function $f_{\alpha_n}$ and its graph  $F_n$ on the gaps $ [u_{c_n}^3(z_{n-1}), u_{c_n}(z_{n-1})]$ and  $[u_{c_n}^4(z_{n-1}),u_{c_n}^2(z_{n-1})].$  Obverse that $u_{c_n}^i(z_{n-1})$ and $ Df_{\alpha_n}(u_{c_n}^i(z_{n-1}))$ vary within a compact family. This allows us to choose from a compact family of $C^{1+Lip}$ diffeomorphisms the extensions $$ k_n^1 : [z_{n-1}, u_{c_n}(z_{n-1})] \rightarrow [f_{\alpha_n}(z_{n-1}), f_{\alpha_n}(u_{c_n}(z_{n-1}))]$$ and 
$$ k_n^3 : [ u_{c_n}^4(z_{n-1}),1] \rightarrow [0, f_{\alpha_n}(u_{c_n}^4(z_{n-1}))]$$
of the maps $f_{\alpha_n}$  on $I_1^n$ and $I_3^n,$ respectively. Let $K_n^1$ and $K_n^3$ are the graphs of  $k_n^1$ and $k_n^3,$ respectively. Therefore,
$$K =   \bigcup\limits_{n \geq 0} F^n(K_n^1 \cup K_n^3).$$ Then $K$ is the graph of a unimodal map $k : [0,1] \rightarrow [0,1]$ which extends $f_\alpha.$ Notice that $k $ is $C^1.$ Since $f_\alpha$ has a quadratic tip, $k$ also has a quadratic tip. Also, $F^n(K^n)$ is the graph of a $C^{1+Lip}$ diffeomorphism. For a similar reason as of Eqn. (\ref{eqlp}) in  subsection \ref{extsn}, the Lipschitz bound $\lambda$ satisfies $$\lambda_{n} \leq \frac{|\hat{I}_2^n|}{|I_2^n|^2} P_0 \;\; \textrm{for some} \;\; P_0>0. $$ Using Lemma \ref{HorL}, we get $$  \lambda \leq P_0 \cdot \rho.$$ 
Thus $k_\alpha$ is a $C^{1+Lip}$ unimodal map with quadratic tip. The construction implies that $k$ is infinitely renormalization and $$graph(R^nk_\alpha) \supset K^n.$$ By choosing $\alpha \in \Sigma_3$ such that the orbit under the shift $\sigma$  is dense in the invariant Cantor set of the horseshoe map.

\end{proof}
\newpage
\section{Period quintupling renormalization}\label{sec3}
\ In this section, we describe the construction of period quintupling renormalizable fixed point on the space of piece-wise affine infinitely renormalizable maps. Further, we \\ describe the extension of piece-wise affine renormalizable map to a $C^{1+Lip}$ unimodal map. Then, we discuss the entropy of period quintupling renormalization operator acting on the same space.\\

\noindent Consider $T_k$ as defined in subsection \ref{p2},  for $k = 5,$ each element $ (s_1, s_2, s_3, s_4, s_5) $ is called a scaling quint-factor. 
Define affine maps $\tilde{s}_1,$ $\tilde{s}_2,$ $\tilde{s}_3,$ $\tilde{s}_4$ and $\tilde{s}_5$ induced by a scaling quint-factor, $$ \tilde{s}_i   : [0,1]\rightarrow [0,1] \;\;\; \textrm{for} \; i = 1,2,\cdots,5, $$  
defined by 
\begin{eqnarray*}
 \tilde{s}_1(t) &=  s_1(1-t) \\ 
 \tilde{s}_2(t) &=  u_c^3(0)+ s_2(1-t) \\
 \tilde{s}_3(t) &=  u_c(0)+ s_3(1-t) \\
 \tilde{s}_4(t) &=  u_c^2(0)- s_4(1-t) \\
 \tilde{s}_5(t) &=  1- s_5(1-t) 
\end{eqnarray*}
\ The scaling quint-factor $ s(n) = (s_1(n),s_2(n), s_3(n)), s_4(n)), s_5(n)) \in T_5 $ induces a quintuplet of affine maps $ (\tilde{s}_1(n) , \tilde{s}_2(n) , \tilde{s}_3(n), \tilde{s}_4(n), \tilde{s}_5(n))$.   \\
For each $n \in \mathbb{N}$ and $i \in \{1,2,..,5\},$ we define the following intervals:
\begin{eqnarray*}
I_i^n = \tilde{s}_2(1)\circ \tilde{s}_2(2)\circ \tilde{s}_2(3) \circ.....\circ \tilde{s}_2(n-1)\circ \tilde{s}_i(n)([0,1]).
\end{eqnarray*}

\noindent Given a proper scaling data, define $$\{c\} = \bigcap\limits_{n \geq 1} I_2^n. $$

\noindent  A proper scaling data $ s : \mathbb{N} \rightarrow T_5 $ induces the set $ D_s  = \bigcup\limits_{n \geq 1} (I_1^n \cup I_3^n \cup I_4^n \cup I_5^n) .$   Consider a map $$ g_s : D_s \rightarrow [0,1] $$ such that $g_s|_{I_i^n}$ is  the affine extensions of  $u_c|_{\partial I_i^n},$ for each $i \in \{1,3,4,5\}.$

\FloatBarrier
\begin{figure}[h]
\centering
{\includegraphics[scale=0.7]{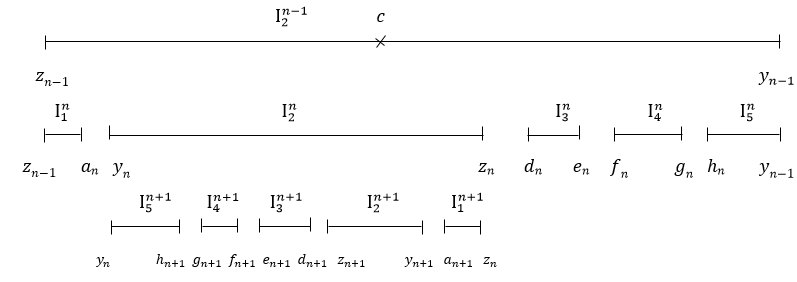}}
\caption{The intervals of next generations}
\label{fig:qnextgen}
\end{figure} 
\FloatBarrier

 \noindent From Figure~\ref{fig:qnextgen}, the end points of the intervals at each level are denoted by

 $$z_0 = 0,\; y_{0} = 1,\; I_2^0 = [0,1] $$ and for $ n \geq 1,$ 
\begin{center}
$ a_n =  \partial I_1^n \backslash \partial I_2^{n-1} $ \vspace{0.2cm} \\
$ y_{2n-1} = min\{ \partial I_2^{2n-1} \}, $ \;\;\;\;\;\;\;\;
$ y_{2n} = max\{ \partial I_2^{2n} \} $\\
$ z_{2n-1} = max\{ \partial I_2^{2n-1} \} , $ \;\;\;\;\;\;\;\;
$ z_{2n} = min\{ \partial I_2^{2n} \} $\\
$ d_{2n-1} = min\{ \partial I_3^{2n-1} \} , $ \;\;\;\;\;\;\;\;
$ d_{2n} = max\{ \partial I_3^{2n} \} $\\
$ e_{2n-1} = max\{ \partial I_3^{2n-1} \} , $ \;\;\;\;\;\;\;\;
$ e_{2n} = min\{ \partial I_3^{2n} \} $\\
$ f_{2n-1} = min\{ \partial I_4^{2n-1} \} , $ \;\;\;\;\;\;\;\;
$ f_{2n} = max\{ \partial I_4^{2n} \} $\\
$ g_{2n-1} = max\{ \partial I_4^{2n-1} \} , $ \;\;\;\;\;\;\;\;
$ g_{2n} = min\{ \partial I_4^{2n} \} $  \vspace{0.2cm}  \\
$ h_n =  \partial I_5^n \backslash \partial I_2^{n-1} $
\end{center}

\begin{defn}
\ For a given proper scaling data $s : \mathbb{N} \rightarrow T_5, $ a map $g_s$ is said to be \textit{period quintupling infinitely renormalizable} if for $n \geq 1, $ 
\begin{itemize}
\item[(1)]  $[g_s(y_{n}), \;1]$ is the maximal domain containing $1$ on which $g_s^{5^n-1}$ is defined affinely and $[0,\; g_s^2(y_{n})]$ is the maximal domain containing $0$ on which $g_s^{5^n-2}$ is defined affinely, 
\item[(2)] $[g_s(0),\;g_s^3(y_{n})]$ is the maximal domain on which $g_s^{5^n-3}$ is defined affinely and $[g_s^4(y_{n}),\;g_s^2(0)]$ is the maximal domain on which $g_s^{5^n-4}$ is defined affinely,
\item[(3)] $g_s^{5^n-1}([g_s(y_{n}),\;1]) = I_2^n,$ \\ $g_s^{5^n-2}([0,\; g_s^2(y_{n})]) = I_2^n,$ \\ $g_s^{5^n-3}([g_s(0),\;g_s^3(y_{n})]) = I_2^n,$ \\ $g_s^{5^n-4}([g_s^4(y_{n}),\;g_s^2(0)]) = I_2^n.$
\end{itemize}
\end{defn}

\noindent \ Define $U_\infty = \{g_s :  g_s \;\;\textrm{is a period quintupling infinitely renormalizable}\}$.

 \noindent Let $g_s \in U_\infty$ be given by the proper scaling data $s : \mathbb{N} \rightarrow T_5 $ and
  define 
  \begin{eqnarray*}
  {\hat{I}}_{2,+}^n &= [u_c( y_{n}),1] \;\;\;\;\;\;\;\;\;\;\;\; = [g_s( y_{n}),1], \\
  {\hat{I}}_{2,++}^n &= [0, u_c^{2}( y_{n})] \;\;\;\;\;\;\;\;\;\;\;\; = [0, g_s^{2}( y_{n})],   \\
   {\hat{I}}_{2,-}^n & = [u_c^{-1}( z_{n}),u_c^{-1}( y_{n})] = [g_s^{-1}( z_{n}),g_s^{-1}( y_{n})] ,\\
  {\hat{I}}_{2,--}^n  &= [max\{u_c^{-2}( y_{n})\},max\{u_c^{-2}( z_{n})\}] \\ &= [max\{g_s^{-2}( y_{n})\},max\{g_s^{-2}(z_{n})\}] 
\end{eqnarray*}

\noindent  where $ u_c^{-1}(y)$ denotes the preimage(s) of $y$ under $u_c$ and  $ u_c^{-2}(y)  = \{ x \in I_2^0:\; u_c^2(x)=y \}.$
   \\
   
 \noindent  Let $$ h_{s,n} : [0,1] \rightarrow [0,1] $$ be defined by $$h_{s,n} = \tilde{s}_2(1)\circ \tilde{s}_2(2) \circ \tilde{s}_2(3)\circ..... \circ \tilde{s}_2(n). $$
Furthermore, let
\begin{eqnarray*}
\hat{h}_{s,n}^+ : [0,1] \rightarrow \hat{I}_{2,+}^n, \;\;\;\;\;\;
 \hat{h}_{s,n}^{++} : [0,1] \rightarrow \hat{I}_{2,++}^n , \\
\hat{h}_{s,n}^- : [0,1] \rightarrow \hat{I}_{2,-}^n, \;\;\;\;\;\;
 \hat{h}_{s,n}^{--} : [0,1] \rightarrow \hat{I}_{2,--}^n  
\end{eqnarray*}
 be the affine orientation preserving homeomorphisms. 
Then define $$R_ng_s: h_{s,n}^{-1}(D_s) \rightarrow [0,1]$$  by 
$$R_ng_s(x) =  \left\{\begin{array}{ll}
R_n^{++}g_s(x) = ({\hat{h}}_{s,n}^{++})^{-1} \circ g_s^{2} \circ h_{s,n}(x) , \;\;\;\;\; \;  \; \textrm{if} \; x \in h_{s,n}^{-1}(\mathop{\cup}\limits_{n \geq 1}I_1^n) \\ 
R_n^{+}g_s(x) = ({\hat{h}}_{s,n}^{+})^{-1} \circ g_s \circ h_{s,n}(x) , \;\;\; \;\;\;\; \;  \; \textrm{if} \; x \in h_{s,n}^{-1}(\mathop{\cup}\limits_{n \geq 1}I_5^n) \\ 
R_n^{-}g_s(x) = ({\hat{h}}_{s,n}^{-})^{-1} \circ g_s^{-1} \circ h_{s,n}(x) , \;\;\;\; \; \;  \; \textrm{if} \; x \in h_{s,n}^{-1}(\mathop{\cup}\limits_{n \geq 1}I_4^n) \\ 
R_n^{--}g_s(x)  = ({\hat{h}}_{s,n}^{--})^{-1} \circ g_s^{-2} \circ h_{s,n}(x),  \;\;  \;  \; \textrm{if} \; x \in h_{s,n}^{-1}(\mathop{\cup}\limits_{n \geq 1}I_3^n)  
\end{array}
\right.$$ \\
where, $$R_n^{++}g_s: h_{s,n}^{-1}(\mathop{\cup}\limits_{n \geq 1}I_1^n ) \rightarrow [0,1] $$ 
$$R_n^{+}g_s: h_{s,n}^{-1}(\mathop{\cup}\limits_{n \geq 1}I_5^n ) \rightarrow [0,1] $$
 $$R_n^{-}g_s: h_{s,n}^{-1}(\mathop{\cup}\limits_{n \geq 1}I_4^n ) \rightarrow [0,1] $$
 and $$ R_n^{--}g_s: h_{s,n}^{-1}(\mathop{\cup}\limits_{n \geq 1}I_3^n ) \rightarrow [0,1],$$ which are illustrated in Figure~\ref{fig:renomop5}.

\FloatBarrier
\begin{figure}[h]
\centering
{\includegraphics [width=120mm]{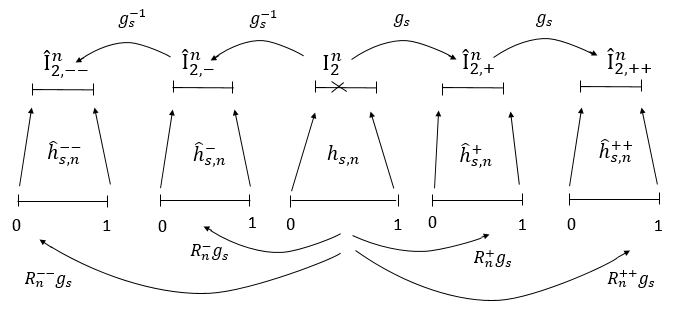}}
\caption{ }
 \label{fig:renomop5}
\end{figure}
\FloatBarrier

\begin{Lem}\label{lem51}
\ Let $s : \mathbb{N} \rightarrow T_5$ be proper scaling data such that $g_s$ is infinitely renormalizable. Then $$R_n{g_s} = g_{\sigma^{n}(s)}. $$
\end{Lem}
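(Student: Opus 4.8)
The argument is the period-quintupling analogue of \cref{lem1}, and I would run it by the same scheme: both $R_ng_s$ and $g_{\sigma^n(s)}$ are piece-wise affine maps, so it suffices to show that they have the same domain and that they agree on each affine piece. The starting point is the conjugacy identity for the nesting maps. Since $h_{s,n}=\tilde{s}_2(1)\circ\cdots\circ\tilde{s}_2(n)$ and the level-$m$ intervals of the shifted data are $I_i^m(\sigma^n s)=\tilde{s}_2(n+1)\circ\cdots\circ\tilde{s}_2(n+m-1)\circ\tilde{s}_i(n+m)([0,1])$, one reads off $h_{s,n}\bigl(I_i^m(\sigma^n s)\bigr)=I_i^{n+m}(s)$ for every $i$ and every $m$. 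Consequently $h_{s,n}(D_{\sigma^n(s)})=D_s\cap I_2^n$, and therefore $h_{s,n}^{-1}(D_s)$, the domain of $R_ng_s$, is exactly $D_{\sigma^n(s)}$, the domain of $g_{\sigma^n(s)}$. This disposes of the domain match and reduces the statement to a piece-by-piece comparison on the four families $I_1,I_3,I_4,I_5$.

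Next I would treat the four branches separately. On $h_{s,n}^{-1}(\bigcup_m I_1^m)$ the renormalization is $(\hat{h}^{++}_{s,n})^{-1}\circ g_s^2\circ h_{s,n}$; on $h_{s,n}^{-1}(\bigcup_m I_5^m)$ it is $(\hat{h}^{+}_{s,n})^{-1}\circ g_s\circ h_{s,n}$; on $h_{s,n}^{-1}(\bigcup_m I_4^m)$ it is $(\hat{h}^{-}_{s,n})^{-1}\circ g_s^{-1}\circ h_{s,n}$; and on $h_{s,n}^{-1}(\bigcup_m I_3^m)$ it is $(\hat{h}^{--}_{s,n})^{-1}\circ g_s^{-2}\circ h_{s,n}$. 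Because $g_s\in U_\infty$, the defining conditions (1)–(3) guarantee that on each of these interval families the relevant iterate $g_s^{\pm1}$, $g_s^{\pm2}$ is defined and affine and carries the piece onto the prescribed image interval $\hat{I}_{2,\cdot}^n$. Since conjugating an affine map by the affine homeomorphisms $h_{s,n}$ and $\hat{h}^{\cdot}_{s,n}$ again yields an affine map, $R_ng_s$ is affine on each $I_i^m(\sigma^n s)=h_{s,n}^{-1}(I_i^{n+m})$.

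It then remains to identify these affine pieces with those of $g_{\sigma^n(s)}$, which by construction are the affine extensions of $u_{c_n}|_{\partial I_i^m(\sigma^n s)}$, where $c_n$ is the critical point of $g_{\sigma^n(s)}$. For this I would track endpoints: $g_s$ agrees with $u_c$ at the endpoints of every interval of $D_s$, and the affine rescalings $h_{s,n}$ and $\hat{h}^{\cdot}_{s,n}$ convert the $u_c$-values at the endpoints at level $n+m$ into the $u_{c_n}$-values at level $m$, exactly as in the tripling case. Since an affine map is pinned down by its two endpoint values, each branch of $R_ng_s$ coincides with the corresponding branch of $g_{\sigma^n(s)}$, and the four branches assemble into the single map $g_{\sigma^n(s)}$.

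The main obstacle is the bookkeeping for the two backward branches. Unlike $g_s$ and $g_s^2$, the inverse iterates $g_s^{-1}$ and $g_s^{-2}$ are multivalued, and one must verify that the branch singled out by the \emph{max} prescriptions in the definitions of $\hat{I}_{2,-}^n$ and $\hat{I}_{2,--}^n$ is the one compatible with the orbit combinatorics recorded in conditions (1)–(3). Getting the orientations and the correct preimages right on these two pieces, and checking that the four affine families fit together without overlap into a unimodal map with quadratic tip, is the only point that requires genuine care; everything else is the routine affineness-and-endpoint verification already used for \cref{lem1}.
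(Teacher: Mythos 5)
Your proposal is correct and takes essentially the same approach as the paper: the paper states \cref{lem51} without any written proof, presenting it as an immediate consequence of the construction, and your verification---domain identification via $h_{s,n}\bigl(I_i^m(\sigma^n s)\bigr)=I_i^{n+m}$, affineness of each of the four branches, and endpoint matching against $u_{c_n}$---is exactly the unwinding of the definitions that the paper leaves implicit. Your remark that the only delicate point is selecting the correct branch of the multivalued inverses $g_s^{-1}$, $g_s^{-2}$ matches how the paper resolves this (by prescribing the codomains $\hat{I}_{2,-}^n$ and $\hat{I}_{2,--}^n$), so there is nothing to correct.
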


\noindent Let $g_s$ be infinitely renormalization, then for $n \geq 0,$ we have $$ g_s^{5^n}: D_s \cap I_2^n \rightarrow I_2^n $$ is well defined.  Define the renormalization $R : U_\infty \rightarrow U$ as
$$ Rg_s = h_{s,1}^{-1} \circ g_s^5 \circ h_{s,1} .$$

\noindent The maps $g_s^{5^n-1} : {\hat{I}}_{2,+}^n \rightarrow I_2^n,$ \; $g_s^{5^n-2} : {\hat{I}}_{2,++}^n \rightarrow I_2^n,$ \; $g_s^{5^n-3} : {\hat{I}}_{2,--}^n \rightarrow I_2^n,$  and $g_s^{5^n-4} : {\hat{I}}_{2,-}^n \rightarrow I_2^n,$ \; are the affine homeomorphisms, whenever $g_s \in  U_\infty$. Then

\begin{Lem}\label{lem52}
\ We have $R^n{g_s} : D_{\sigma^{n}(s)} \rightarrow [0,1]$ and $R^n{g_s} = R_n{g_s}.$
\end{Lem}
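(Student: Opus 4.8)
The plan is to prove both assertions at once by induction on $n$, reducing the whole lemma to the single-step identity $Rg_s = R_1 g_s$ and then feeding it into Lemma~\ref{lem51}. Since Lemma~\ref{lem51} already identifies $R_ng_s = g_{\sigma^n(s)}$ with domain $D_{\sigma^n(s)}$, it is enough to prove $R^ng_s = g_{\sigma^n(s)}$; the statement about the domain then follows automatically. For the inductive step I would write, for $g_s\in U_\infty$ with proper scaling data $s$, the chain $R^{n}g_s = R(R^{n-1}g_s) = R\,g_{\sigma^{n-1}(s)} = R_1 g_{\sigma^{n-1}(s)} = g_{\sigma(\sigma^{n-1}(s))} = g_{\sigma^{n}(s)}$, where the second equality is the inductive hypothesis, the third is the base case applied to the map $g_{\sigma^{n-1}(s)}$ (which again lies in $U_\infty$, now with scaling data $\sigma^{n-1}(s)$), and the fourth is Lemma~\ref{lem51}. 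Thus everything rests on the base case.

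To prove the base case $Rg_s = R_1 g_s$ I would compare the honest return map $Rg_s = h_{s,1}^{-1}\circ g_s^5\circ h_{s,1}$ with the piece-wise map $R_1 g_s$ separately on each of the four branches $h_{s,1}^{-1}(I_5^m)$, $h_{s,1}^{-1}(I_1^m)$, $h_{s,1}^{-1}(I_3^m)$ and $h_{s,1}^{-1}(I_4^m)$ (only $m\ge 2$ contribute, since $I_i^1\not\subset I_2^1$). The mechanism is that, for a point $p$ in a level-one non-central interval, $g_s$ is affine, so along each branch the fifth iterate $g_s^5$ is a composition of affine pieces and is itself affine; on each branch I would then rewrite $g_s^5$ using the maximal affine domains guaranteed by the definition of period quintupling infinitely renormalizable maps, namely the affine homeomorphisms $g_s^{4}\colon\hat I_{2,+}^1\to I_2^1$, $g_s^{3}\colon\hat I_{2,++}^1\to I_2^1$, $g_s^{2}\colon\hat I_{2,--}^1\to I_2^1$ and $g_s\colon\hat I_{2,-}^1\to I_2^1$ (the maps $g_s^{5^1-1},\dots,g_s^{5^1-4}$). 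On the two forward branches $I_5^m$ and $I_1^m$ this is a clean peeling: $g_s^5 = g_s^{4}\circ g_s$ and $g_s^5 = g_s^{3}\circ g_s^2$, and since $\hat h_{s,1}^{+}$ and $\hat h_{s,1}^{++}$ are by definition the affine homeomorphisms making $h_{s,1}^{-1}\circ g_s^{4} = (\hat h_{s,1}^{+})^{-1}$ on $\hat I_{2,+}^1$ and $h_{s,1}^{-1}\circ g_s^{3} = (\hat h_{s,1}^{++})^{-1}$ on $\hat I_{2,++}^1$, the two expressions coincide. On the backward branches $I_3^m$ and $I_4^m$ I would instead read the affine bijections $g_s^{2}\colon\hat I_{2,--}^1\to I_2^1$ and $g_s\colon\hat I_{2,-}^1\to I_2^1$ through their inverse branches $g_s^{-2}$ and $g_s^{-1}$, which send $p$ into $\hat I_{2,--}^1$ and $\hat I_{2,-}^1$ respectively, and match these against $(\hat h_{s,1}^{--})^{-1}$ and $(\hat h_{s,1}^{-})^{-1}$. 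In every case the verification reduces to equality of two affine maps onto $I_2^1$, which I would settle by checking that they agree at the two endpoints of the branch and carry the same orientation.

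The step I expect to be the main obstacle is exactly this base case on the two backward branches. There the honest fifth iterate $g_s^5$ naturally presents itself through forward iterates, whereas $R_1 g_s$ is written with the inverse iterates $g_s^{-1}$ and $g_s^{-2}$; reconciling the two requires using the unimodal folding together with the precise placement of $\hat I_{2,-}^1$ and $\hat I_{2,--}^1$ as the maximal affine domains supplied by conditions (1)--(3) of the definition, and tracking orientations carefully, so as to be certain that the rescaling homeomorphisms $\hat h_{s,1}^{-}$ and $\hat h_{s,1}^{--}$ reproduce the return map on the nose rather than merely an affinely conjugate copy. The same bookkeeping simultaneously yields the equality of domains $h_{s,1}^{-1}(D_s) = D_{\sigma(s)}$, and hence, by the induction, $R^ng_s$ is defined precisely on $D_{\sigma^n(s)}$. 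This argument is the period quintupling analogue of Lemma~\ref{lem2}, and I would organize it in the same way.
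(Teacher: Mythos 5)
You should know at the outset that the paper contains no proof of this lemma: it is asserted as an immediate consequence of the remark that the maps $g_s^{5^n-k}$, $k=1,2,3,4$, are affine homeomorphisms onto $I_2^n$, so your attempt can only be judged on its own merits. Your induction scaffolding and the reduction to the one-step identity are fine, but the branch-by-branch base case cannot be carried out, and the obstruction is not the bookkeeping you flagged. Of the two identities you invoke ``by definition'' on the forward branches, only one is true. Since $h_{s,1}=\tilde{s}_2(1)$ is orientation \emph{reversing} ($h_{s,1}(0)=z_1$, $h_{s,1}(1)=y_1$), while $g_s^4$ maps $\hat{I}_{2,+}^1=[g_s(y_1),1]$ onto $I_2^1$ reversing orientation ($g_s^4(g_s(y_1))=u_c^8(0)=z_1$ and $g_s^4(1)=u_c^3(0)=y_1$), the composition $h_{s,1}^{-1}\circ g_s^4$ is orientation preserving and does equal $(\hat{h}_{s,1}^{+})^{-1}$. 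But $g_s^3$ maps $\hat{I}_{2,++}^1=[0,g_s^2(y_1)]=I_1^1$ onto $I_2^1$ \emph{preserving} orientation ($g_s^3(0)=u_c^3(0)=y_1$, $g_s^3(g_s^2(y_1))=g_s^5(y_1)=z_1$), so $h_{s,1}^{-1}\circ g_s^3$ is orientation reversing and equals $1-(\hat{h}_{s,1}^{++})^{-1}$, not $(\hat{h}_{s,1}^{++})^{-1}$: on the $I_1$-branch, $Rg_s$ and $R_1^{++}g_s$ differ by the flip $x\mapsto 1-x$. On the backward branches the mismatch is structural rather than orientational: there $g_s^{-1}$ and $g_s^{-2}$ are the inverses of the \emph{fixed} affine homeomorphisms $g_s\colon I_4^1\to I_2^1$ and $g_s^2\colon I_3^1\to I_2^1$, so $(\hat{h}_{s,1}^{-})^{-1}\circ g_s^{-1}\circ h_{s,1}$ and $(\hat{h}_{s,1}^{--})^{-1}\circ g_s^{-2}\circ h_{s,1}$ are single affine maps of $x$ (the first is literally the identity map), whereas $h_{s,1}^{-1}\circ g_s^5\circ h_{s,1}$ restricted to $h_{s,1}^{-1}(I_4^m)$ or $h_{s,1}^{-1}(I_3^m)$ is genuinely piecewise affine, its slopes coming from chords of $u_c$ over intervals of depth $m$ and hence varying with $m$. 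No endpoint-and-orientation check can reconcile one affine map with infinitely many chords of a parabola.

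What is true, and what your peeling argument proves once you stop trying to reproduce the paper's case distinction, is a single global formula. Every point of $D_s\cap I_2^1$ has forward orbit $I_2^1\to I_5^1\to I_1^1\to I_3^1\to I_4^1\to I_2^1$, so $g_s^5=\bigl(g_s^4|_{\hat{I}_{2,+}^1}\bigr)\circ g_s$ on all of $D_s\cap I_2^1$, and by the orientation check above $Rg_s=(\hat{h}_{s,1}^{+})^{-1}\circ g_s\circ h_{s,1}$ on \emph{every} branch. A direct computation with $u_c(x)=1-(x-c)^2/(1-c)^2$ gives $(\hat{h}_{s,1}^{+})^{-1}\circ u_c\circ h_{s,1}=u_{c_1}$ exactly, where $c_1=\mathcal{R}(c)$ as in Eqn.~(\ref{36}); since $g_s$ agrees with $u_c$ on $\partial I_i^m$ and $h_{s,1}^{-1}(I_i^{m+1})=I_i^m(\sigma(s))$, this yields $Rg_s=g_{\sigma(s)}$, and your induction then gives $R^ng_s=g_{\sigma^n(s)}$ with domain $D_{\sigma^n(s)}$, which is the statement the rest of the paper actually uses. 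In other words, the lemma as literally stated is false on three of the four branches because the paper's definition of $R_n$ is inconsistent with the return map (the same defect already occurs in the two-branch definition behind Lemma~\ref{lem2}, your stated model); you inherited the defect by taking that definition at face value, and the repair is to prove the global ``$+$'' formula, not to track orientations more carefully within the four-branch scheme.
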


\noindent The~\cref{lem51} and~\cref{lem52}  give us the following result,

\begin{prop}
\ There exists a map $g_{{s}^*} \in U_\infty, $ where ${s}^*$ is characterized by $$Rg_{{s}^*} = g_{{s}^*}. $$
\end{prop}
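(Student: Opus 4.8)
The plan is to follow the same scheme as in the proof of Proposition~\ref{thm1} for the tripling case, now carrying five scaling factors instead of three. First I would fix a proper scaling data $s : \mathbb{N} \to T_5$ for which $g_s$ is infinitely renormalizable, and write $c_n$ for the critical point of $g_{\sigma^n(s)}$. Reading off the period-five interval combinatorics from Figure~\ref{fig:qnextgen} together with the renormalizability relations in items (1)--(3) of the definition, I would locate the first several forward iterates of $0$, namely $u_{c_n}(0), u_{c_n}^2(0), \ldots, u_{c_n}^6(0)$, and express them in terms of the quint-factor $(s_1(n),\ldots,s_5(n))$ using the definitions of $\tilde{s}_1,\ldots,\tilde{s}_5$. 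This is the exact analogue of Eqns.~(\ref{eq01})--(\ref{eq03}); for instance the rightmost interval gives $u_{c_n}^2(0) = 1 - s_5(n)$, and each successive iterate lands in one of the five intervals $I_1^n,\ldots,I_5^n$ according to the cyclic order dictated by the combinatorics. Rescaling the central interval by $h_{s,1}$ then yields the transition rule $c_{n+1} = \mathcal{R}(c_n)$, exactly as Eqn.~(\ref{eq04}) was obtained in the tripling case.

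Next I would impose the admissibility constraints. Membership in $T_5$ requires $s_i(n) > 0$ for all $i$ and $\sum_{i=1}^5 s_i(n) < 1$, and the five intervals $I_1^n,\ldots,I_5^n$ being pairwise disjoint gives positivity of the gaps separating consecutive intervals (the analogues of Eqns.~(\ref{g1}) and (\ref{g2})). Solving the resulting system---most conveniently with Mathematica, as was done for (\ref{eq10})--(\ref{eq13})---expresses each scaling factor as an explicit function $s_i(n) = S_i(c_n)$ and produces the renormalization map $c_{n+1} = \mathcal{R}(c_n)$ as a rational function of $c_n$. The admissibility conditions then cut out a feasible domain $F_d \subset (0,\frac{1}{2})$, which I would determine explicitly via a \texttt{Reduce} computation. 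Within $F_d$ one checks that $\mathcal{R}$ has a unique fixed point $c^*$ and that $\mathcal{R}'(c^*) > 1$, so that $c^*$ is expanding.

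Finally, setting $s^*(n) = \big(S_1(c^*),\ldots,S_5(c^*)\big)$ for every $n$ gives a shift-invariant scaling data, $\sigma(s^*) = s^*$, and Lemma~\ref{lem51} immediately yields $R g_{s^*} = g_{s^*}$, with $g_{s^*} \in U_\infty$ the desired fixed point. The main obstacle is the algebraic one: the quintupling combinatorics force us to track six orbit points through high iterates of $u_c$, so the functions $S_i$ and $\mathcal{R}$ are high-degree rational expressions, and the genuinely nontrivial step is verifying that the feasible domain $F_d$ is nonempty and contains exactly one (expanding) fixed point of $\mathcal{R}$. Unlike the tripling case, one must also be careful to read off the correct cyclic order in which the critical orbit visits $I_1^n,\ldots,I_5^n$ before the system can even be written down; an error there would propagate into every subsequent formula.
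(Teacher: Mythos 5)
Your overall scheme is exactly the paper's: read the orbit equations off the period-five combinatorics, solve for $s_i(n) = S_i(c_n)$ and the transition map $\mathcal{R}$ with Mathematica, cut out the feasible domain with the positivity and gap conditions, locate the unique expanding fixed point $c^*$, and then invoke Lemma~\ref{lem51} on the constant (hence shift-invariant) scaling data to get $Rg_{s^*} = g_{s^*}$. The paper does all of this, finding $f_d = f_{d_1} \cup f_{d_2}$ and $c^* = 0.387226\ldots$ in $f_{d_2}$.

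However, the one concrete equation you write down is wrong, and it is wrong in precisely the way you yourself warn about at the end. Under the quintupling combinatorics $I_2^n \rightarrow I_5^n \rightarrow I_1^n \rightarrow I_3^n \rightarrow I_4^n \rightarrow I_2^n$, the point $0$ (left endpoint of $I_1^n$) visits $I_3^n$, then $I_4^n$, then $I_2^n$, and only on its \emph{fourth} iterate reaches the rightmost interval $I_5^n$; so the correct first equation is $u_{c_n}^4(0) = 1 - s_5(n)$ (the paper's Eqn.~(\ref{31})), not $u_{c_n}^2(0) = 1 - s_5(n)$, which is a naive pattern-match from the tripling relation $u_{c_n}^2(0) = 1 - s_3(n)$. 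Relatedly, the system does not close after $u_{c_n}^6(0)$: one must follow the orbit through $u_{c_n}^8(0)$, since the equations identifying $s_1, s_3, s_4, s_2$ are $u_{c_n}^5(0) = s_1(n)$, $u_{c_n}^6(0) = s_3(n) + u_{c_n}(0)$, $u_{c_n}^7(0) = u_{c_n}^2(0) - s_4(n)$, $u_{c_n}^8(0) = s_2(n) + u_{c_n}^3(0)$, and the renormalization relation is $c_{n+1} = \bigl(u_{c_n}^8(0) - c_n\bigr)/s_2(n)$. Starting from your equation, every $S_i$, the map $\mathcal{R}$, the feasible domain, and the fixed point would come out wrong, so the error is not cosmetic; the rest of your argument is sound once the orbit bookkeeping is corrected.
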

\begin{proof}
\ Consider $s : \mathbb{N} \rightarrow T_5$ be proper scaling data such that $g_s  $ is infinitely renormalizable. Let $c_n$ be the critical point of $g_{\sigma^n(s)}.$

\noindent The combinatorics for period quintupling renormalization is shown in Figure \ref{fig:p5}.
 
\begin{figure}[!htb]
\centering{\includegraphics [width=100mm]{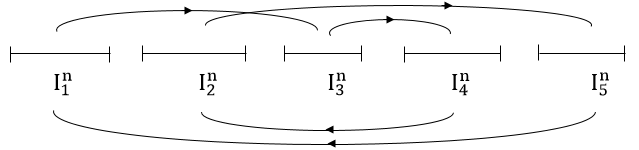}}
\caption{Period quintuple combinatorics ($ I_2^n \rightarrow I_5^n \rightarrow I_1^n \rightarrow I_3^n \rightarrow I_4^n \rightarrow I_2^n).$ }
\label{fig:p5}
\end{figure}
\begin{figure}[!htb]
\centering
{\includegraphics [width=120mm]{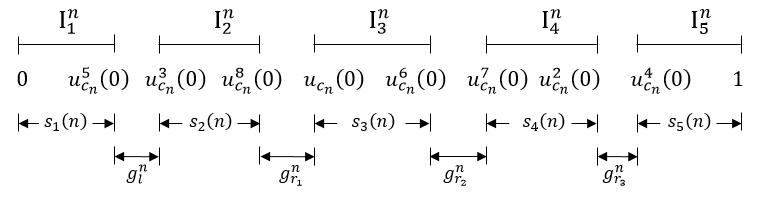}}
    \caption{Length of the intervals and gaps}
    \label{fig:qlength}
\end{figure} 

\noindent From Figures~\ref{fig:p5} and ~\ref{fig:qlength}, we have the following 
\begin{eqnarray}
 u_{c_n}^4(0) &= 1-s_5(n) \label{31}\\
 u_{c_n}^5(0) &= u_{c_n}(1-s_5(n)) \nonumber  \\ &= s_1(n) \label{32} \\
 u_{c_n}^6(0) &= u_{c_n}(s_1(n)) \nonumber  \\ &= s_3(n) +  u_{c_n}(0) \label{33} \\
 u_{c_n}^7(0) &= u_{c_n}(s_3(n)+ u_{c_n}(0)) \nonumber  \\ &= u_{c_n}^2(0)- s_4(n) \label{34} \\
 u_{c_n}^8(0) &= u_{c_n}(u_{c_n}^2(0)- s_4(n))\nonumber  \\ & = s_2(n) +  u_{c_n}^3(0) \label{35}  \\
 c_{n+1} &= \frac{u_{c_n}^8(0)-c_n}{s_2(n)} \equiv \mathcal{R}(c_n). \label{36}
\end{eqnarray}

\noindent We use Mathematica for solving the Eqns.~(\ref{31}),~(\ref{32}),~(\ref{33}),~(\ref{34}) and~(\ref{35}), and we obtain the expressions for $s_1(n),\;s_2(n),\;s_3(n),\;s_4(n)$ and $s_5(n).$ \\ Let $s_i(n) \equiv S_i(c_n)$ for $i = 1,\ldots,5.$ Since these expressions are too lengthy, we just plotted the graph of each $S_i(c).$  The graphs of $S_i(c_n)$ are shown in Figures~\ref{fig:pplots}. 

\begin{figure}[!htb]
\centering
\begin{subfigure}{.5\textwidth}
\centering
\includegraphics [width=60mm]{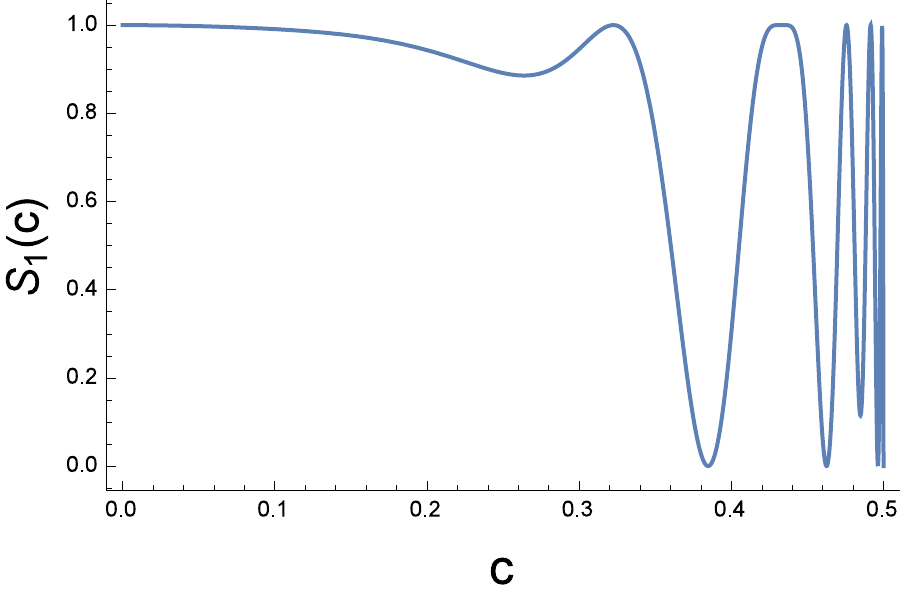}
\caption{}
\end{subfigure}%
\begin{subfigure}{.5\textwidth}
\centering
\includegraphics [width=60mm]{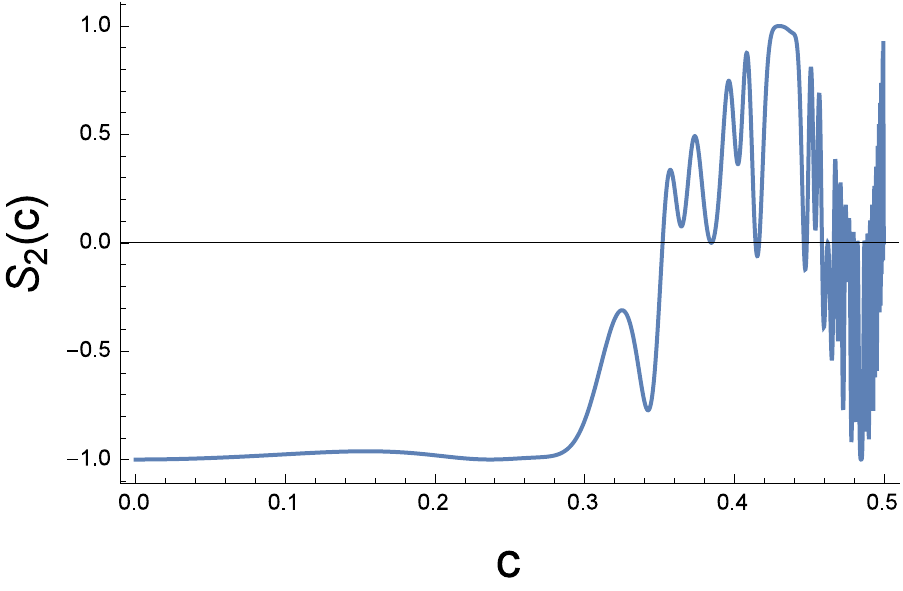}
\caption{}
\end{subfigure}

\begin{subfigure}{\linewidth}
\centering
\includegraphics [width=60mm]{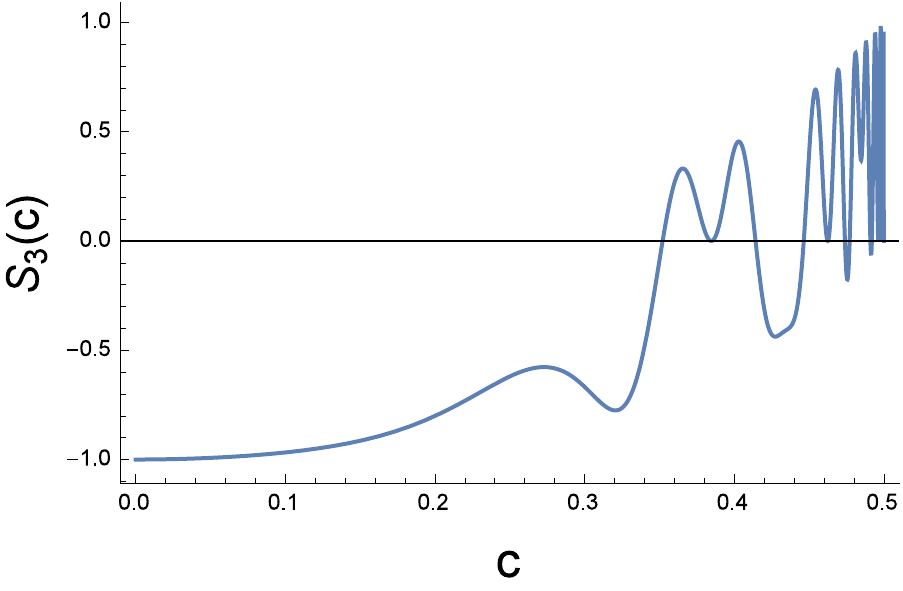}
\caption{}
\end{subfigure}
\centering
\begin{subfigure}{.5\textwidth}
\centering
\includegraphics [width=60mm]{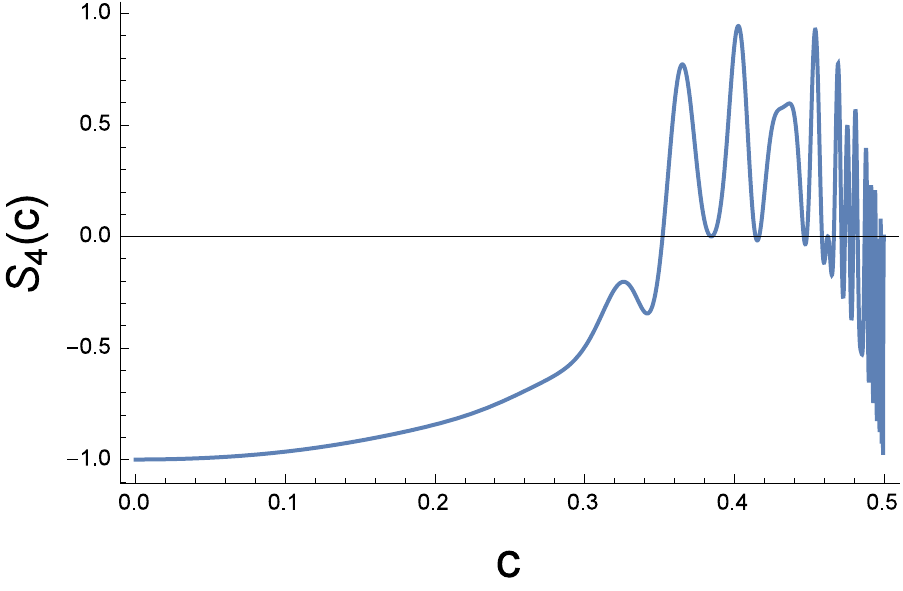}
\caption{}
\end{subfigure}%
\begin{subfigure}{.5\textwidth}
\centering
\includegraphics [width=60mm]{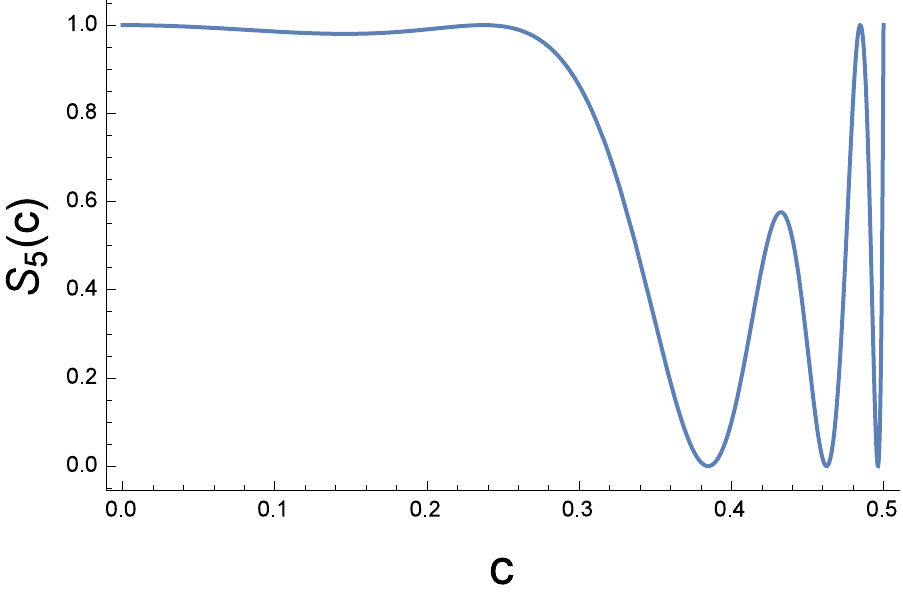}
\caption{}
\end{subfigure}
\caption{The graphs of $S_1(c),$ $S_2(c),$ $S_3(c),$ $S_4(c)$  and  $S_5(c)$}
\label{fig:pplots}
\end{figure}

\noindent Since $(s_1(n), s_2(n), s_3(n), s_4(n), s_5(n)) \in T_5,$ then we have the following conditions.
\begin{eqnarray}
 s_i(n) > 0, \textrm{ for each } i \in \{1,2,..,5\}\label{37} \\
 \sum_{i=1}^{5}s_i(n) <1 \label{38} \\
  0 < c_n < \frac{1}{2} \label{38a} .
 \end{eqnarray}

\noindent As intervals $I_i^n$ for $i = 1,2,\ldots,5$ are pairwise disjoint, let $g_{l}^n$ be the gap between  $I_1^n \; \& \; I_2^n$ and let $ g_{r_i}^n$ be the gap between  $I_{i+1}^n$ and $I_{i+2}^n$ for $i=1,2,3.$  The intervals and gaps are illustrated in Figure \ref{fig:qlength}. Then we have 
\begin{eqnarray}
g_{l}^n = u_{c_n}^3(0)-u_{c_n}^5(0)   \equiv G_{l}(c_n) > 0  \label{39}  \\
 g_{r_1}^n = u_{c_n}(0)-u_{c_n}^8(0)  \equiv G_{r_1}(c_n)> 0 \label{310}  \\
 g_{r_2}^n = u_{c_n}^7(0)-u_{c_n}^6(0)  \equiv G_{r_2}(c_n)> 0 \label{311}   \\
 g_{r_3}^n = u_{c_n}^4(0)-u_{c_n}^2(0)  \equiv G_{r_3}(c_n)> 0 \label{312}  
\end{eqnarray}
 
\noindent Note that the conditions (\ref{37}), (\ref{39}), (\ref{310}) to (\ref{312}) implies the condition (\ref{38}).

Therefore, the conditions~(\ref{37}) and (\ref{38a}) together with the gaps conditions (\ref{39}) to (\ref{312}) define the feasible domain $f_d$ to be:
  
 \begin{eqnarray}
\fl f_d = \Big\{c \in (0, 0.5)  : S_i(c) > 0 \; \forall\; i =1, \ldots ,5, \; G_l(c)>0,\; G_{r_j}(c)> 0 \; \forall\; j =1, 2,3 \Big\}. \label{fd}
\end{eqnarray}
To compute the feasible domain $f_d,$  we need to find subinterval(s) of $(0, 0.5)$ which satisfies the conditions of (\ref{fd}). By using Mathematica, we employ the following command to obtain the feasible domain 
\begin{eqnarray*} 
\fl \textup{N}[\textup{Reduce}[\{S_1(c) > 0,S_2(c) > 0,S_3(c) > 0,S_4(c) > 0,S_5(c) > 0,G_l(c) > 0,G_{r_1}(c) > 0, \\ G_{r_2}(c) > 0,G_{r_3}(c) > 0,0<c<0.5\},c]] .
\end{eqnarray*}
   This yields: 
\begin{eqnarray*}
\fl f_d = (0.379765..., 0.384772...) \cup (0.384772..., 0.390436...) \equiv f_{d_1} \cup f_{d_2}.
\end{eqnarray*}
From the Eqn.(\ref{36}), the graphs of $\mathcal{R}(c)$ in the sub-domains $f_{d_1}$ and $f_{d_2}$ of $f_d$ are shown in Figure \ref{fig:gfix}.

 \begin{figure}[ht]
  \centering
  \begin{subfigure}[b]{0.5\linewidth}
    \centering\includegraphics[width=130pt]{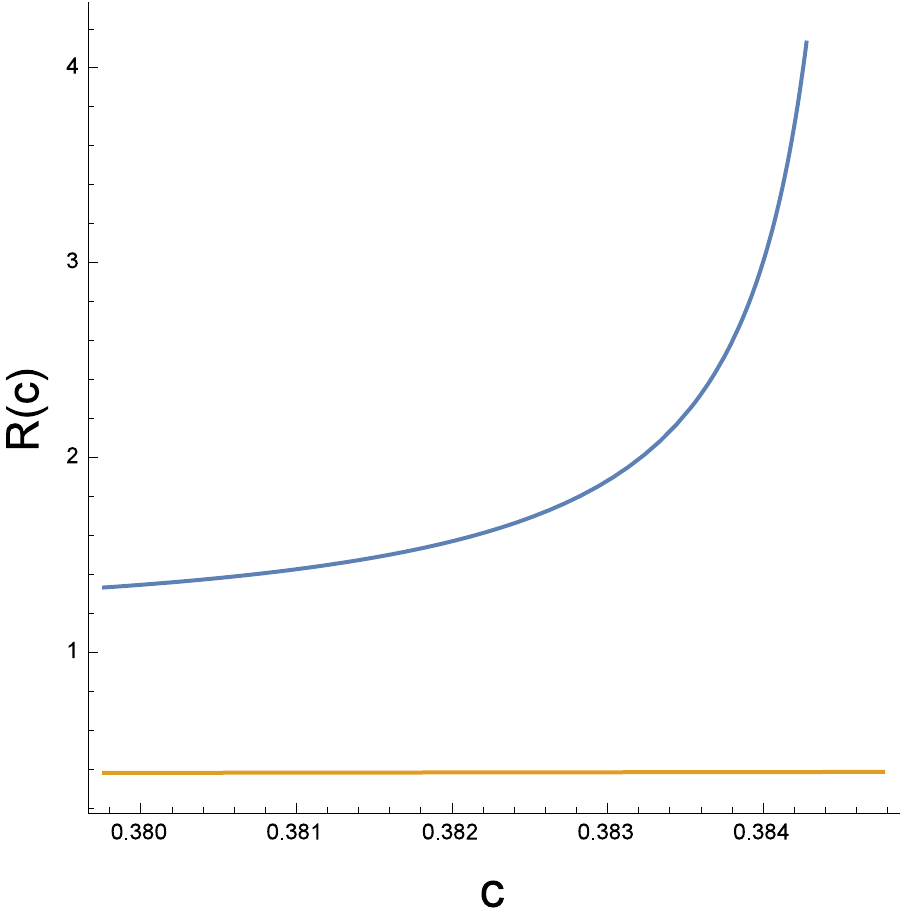}
   \subcaption{ $\mathcal{R}$ has no fixed point in $f_{d_1}.$ }
    \label{fig:gfix1}
  \end{subfigure}%
  \begin{subfigure}[b]{0.5\linewidth}
    \centering\includegraphics[width=160pt]{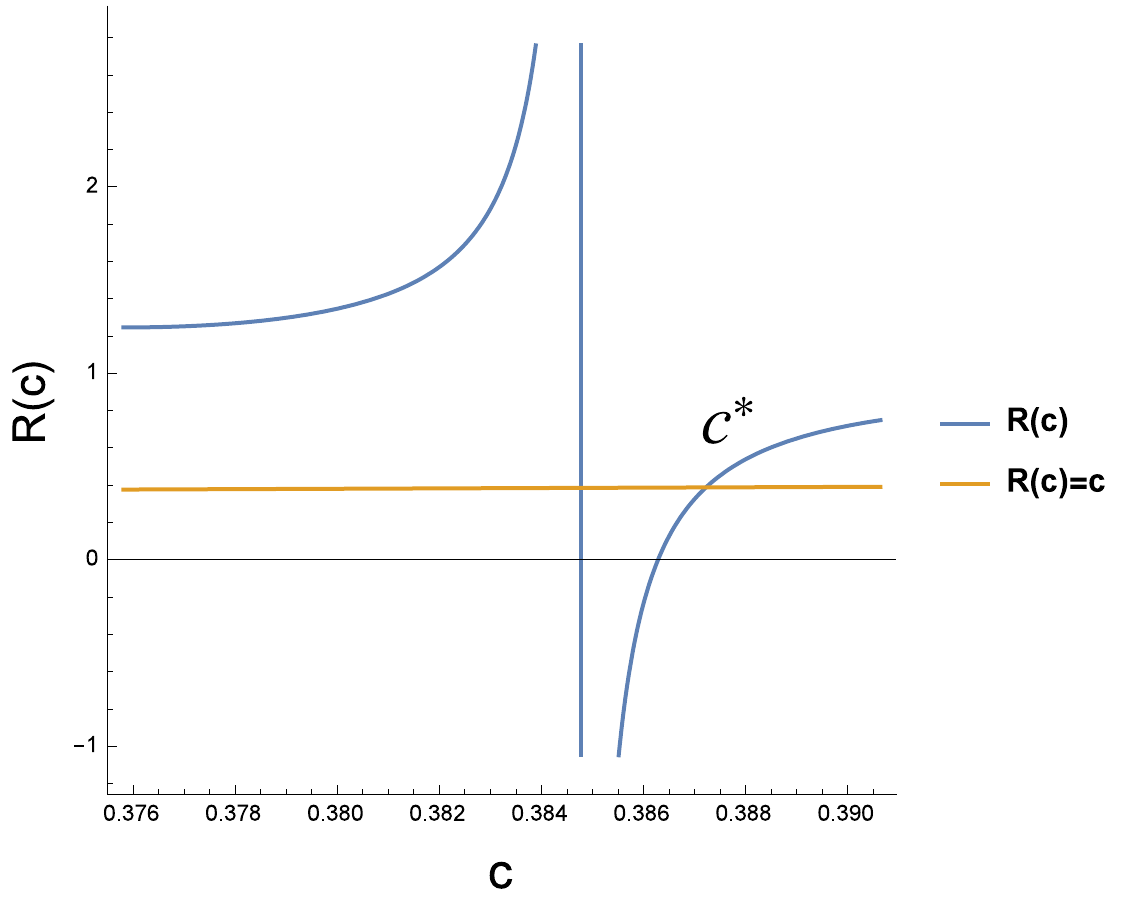}
    \subcaption{$\mathcal{R}$ has only one fixed point in $f_{d_2}.$ }
    \label{fig:gfix2}
  \end{subfigure}
  \caption{The graph of $\mathcal{R} : f_d \rightarrow \mathbb{R}$ and the diagonal $\mathcal{R}(c) = c.$}
  \label{fig:gfix}
\end{figure} 

The map $\mathcal{R}  : f_d \rightarrow \mathbb{R}$ is expanding in the neighborhood of the fixed point $c^*$ which is illustrated in Figure~\ref{fig:gfix2}. By Mathematica computations, we observe that $\mathcal{R}$ has a unique fixed point $c^* = 0.387226...$ in $f_{d_2}.$ Therefore, we conclude that $\mathcal{R}$ has only one fixed point in $f_d $ such that \\ $$\mathcal{R}(c^*) = c^*$$ corresponds to an infinitely renormalizable map $g_{s^*}.$

\noindent In other words, consider the scaling data ${s}^* : \mathbb{N} \rightarrow T_5$  with 
 \begin{eqnarray*}
 {s}^*(n) &= (s_1^*(n), s_2^*(n), s_3^*(n), s_4^*(n), s_5^*(n) )  \\
  &=(u_{c^*}^5(0), \; u_{c^*}^8(0) - u_{c^*}^3(0), \; u_{c^*}^6(0)- u_{c^*}(0), \; u_{c^*}^2(0) - u_{c^*}^7(0), \;  1-u_{c^*}^4(0)).
 \end{eqnarray*}
  Then $\sigma({s}^* ) = {s}^* $ and using Lemma \ref{lem51} we have $$Rg_{{s}^*} = g_{{s}^*}.$$
\end{proof}

\begin{Lem}
\ If $ g_{{s}^*}$ is the map with a proper scaling data $ {s}^* = (s_1^*, s_2^*, s_3^*, s_4^*, s_5^*)$ corresponding to $c^*,$ then we have
 $$(s_2^*)^2=s_5^*.$$
\end{Lem}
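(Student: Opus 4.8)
The plan is to mirror the proof of Lemma~\ref{lem24} from the period tripling case, replacing the role of $\hat{I}_2^n$ by the interval $\hat{I}_{2,+}^n = u_{c^*}(I_2^n) = [u_{c^*}(y_n),1] = [g_{s^*}(y_n),1]$, and replacing the combinatoric arrow $I_2^n\to I_3^n$ by the first arrow $I_2^n\to I_5^n$ of the period quintupling cycle $I_2^n\to I_5^n\to I_1^n\to I_3^n\to I_4^n\to I_2^n$.

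First I would record the self-similar scaling identities at the fixed point. Since $I_i^n = \tilde{s}_2(1)\circ\cdots\circ\tilde{s}_2(n-1)\circ\tilde{s}_i(n)([0,1])$ and each $\tilde{s}_2$ contracts by the constant factor $s_2^*$, one obtains $|I_i^n| = (s_2^*)^{n-1}\,s_i^*$; in particular $|I_2^n| = (s_2^*)^n$, so that $|I_2^{n+1}|/|I_2^n| = s_2^*$ and $|I_5^{n+1}|/|I_2^n| = s_5^*$. These are the period quintupling analogues of Remark~\ref{lem3} and follow directly from the affine-nesting construction of the intervals.

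Next, from the definition of period quintupling infinite renormalizability, the map $g_{s^*}^{5^n-1}\colon \hat{I}_{2,+}^n\to I_2^n$ is affine, monotone and onto. The crucial step is to show that the nested piece $\hat{I}_{2,+}^{n+1}\subset\hat{I}_{2,+}^n$ is carried by this same map onto $I_5^{n+1}$. Writing $\hat{I}_{2,+}^{n+1} = g_{s^*}(I_2^{n+1})$, this reduces to identifying the first return $g_{s^*}^{5^n}(I_2^{n+1})$; the cycle $I_2\to I_5\to\cdots$ shows $g_{s^*}^{5^n}(I_2^{n+1}) = I_5^{n+1}$, and hence $g_{s^*}^{5^n-1}(\hat{I}_{2,+}^{n+1}) = I_5^{n+1}$. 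Because $g_{s^*}^{5^n-1}$ is affine on $\hat{I}_{2,+}^n$ it preserves length ratios, so
$$\frac{|\hat{I}_{2,+}^{n+1}|}{|\hat{I}_{2,+}^n|} = \frac{|I_5^{n+1}|}{|I_2^n|} = s_5^*.$$
Finally I would invoke the quadratic tip. Since $g_{s^*}(y_n) = u_{c^*}(y_n) = 1 - \left(\frac{y_n-c^*}{1-c^*}\right)^2$, we have $|\hat{I}_{2,+}^n| = 1 - u_{c^*}(y_n) = \left(\frac{y_n-c^*}{1-c^*}\right)^2$, and combining this with the displayed ratio and the self-similarity $\frac{y_{n+1}-c^*}{y_n-c^*} = \frac{|I_2^{n+1}|}{|I_2^n|} = s_2^*$ yields
$$s_5^* = \frac{|\hat{I}_{2,+}^{n+1}|}{|\hat{I}_{2,+}^n|} = \left(\frac{y_{n+1}-c^*}{y_n-c^*}\right)^2 = (s_2^*)^2,$$
which is the assertion.

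I expect the main obstacle to be the combinatorial bookkeeping in the crucial step: confirming that the first return of $I_2^{n+1}$ under $g_{s^*}^{5^n}$ is exactly $I_5^{n+1}$, rather than one of the other four generation-$(n+1)$ intervals. This must be read off carefully from the quintupling combinatorics diagram and the defining affine-domain conditions, since landing on a different interval would replace $s_5^*$ by another scaling factor and destroy the identity. The remaining steps are routine once this identification and the scaling ratios of the nested intervals are in place.
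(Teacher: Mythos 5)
Your proposal is correct and follows essentially the same route as the paper's proof: the affine, monotone, onto map $g_{s^*}^{5^n-1}\colon \hat{I}_{2,+}^{n+1}\subset\hat{I}_{2,+}^n \rightarrow I_5^{n+1}\subset I_2^n$ gives $|\hat{I}_{2,+}^{n+1}|/|\hat{I}_{2,+}^n| = s_5^*$, and the quadratic tip $g_{s^*}(y_n)=u_{c^*}(y_n)$ converts that ratio into $\left((y_{n+1}-c^*)/(y_n-c^*)\right)^2=(s_2^*)^2$. In fact you spell out the one step the paper leaves as ``by construction'' (that $g_{s^*}^{5^n}(I_2^{n+1})=I_5^{n+1}$ via the cycle $I_2\rightarrow I_5\rightarrow I_1\rightarrow I_3\rightarrow I_4\rightarrow I_2$), so the write-up is, if anything, more complete than the original.
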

\begin{proof}
\ Let $\hat{I}_{2,+}^n = g_{{s}^*}(I_2^n)  = [g_{{s}^*}(y_{n}),1]$ and $\hat{I}_5^{n+1} = g_{{s}^*}(I_5^{n+1}).$ Then $g_{{s}^*}^{ 5^n-1} : \hat{I}_{2,+}^n \rightarrow I_2^n  $ is affine, monotone and onto. Further, by construction
$$ g_{{s}^*}^{ 5^n-1} : \hat{I}_{2,+}^{n+1} \rightarrow I_5^{n+1}. $$ Hence, $$\frac{|\hat{I}_{2,+}^{n+1}|}{|\hat{I}_{2,+}^n|} = s_5^*.$$ Therefore,
$|I_2^n| = (s_2^*)^n$ and $|\hat{I}_2^n| = (s_5^*)^n$. Since, $$g_{{s}^*}(y_n) = u_{{c}^*}(y_n).$$ This implies,
$$s_5^* = \frac{|\hat{I}_{2,+}^{n+1}|}{|\hat{I}_{2,+}^n|} = \left(\frac{{y_{n+1}}-c}{{y_{n}}-c}\right)^2  =\left( \frac{|{I}_2^{n+1}|}{|{I}_2^n|}\right)^2 = (s_2^*)^2.$$

\end{proof}

\begin{rem}
\ Let $I_2^n = [ y_n, z_n]$ be the interval containing $c^*$ corresponding to the scaling data  $s^*  = (s_1^*, s_2^*, s_3^*, s_4^*, s_5^*)$ then
 $$  g_{s^*}(y_n) = u_{c^*}(y_n) .$$ Hence, $g_{s^*}$ has a quadratic tip. 
\end{rem}

\begin{rem}
\ A proper scaling data $(s_1^*, s_2^*, s_3^*, s_4^*, s_5^*)$ is satisfying the following properties, for all $n \in \mathbb{N},$
\begin{enumerate}[label=(\roman*)] \normalsize{ 
\item $ \frac{|I_2^{n+1}|}{|I_2^{n}|} = s_2^*$ \textrm{and} $  \frac{|I_1^{n+1}|}{|I_1^{n}|}  = \frac{|I_3^{n+1}|}{|I_3^{n}|} = \frac{|I_4^{n+1}|}{|I_4^{n}|} = \frac{|I_5^{n+1}|}{|I_5^{n}|} = s_2^* $
\item $\frac{|I_1^{n+1}|}{|I_2^{n}|} = s_1^*$
\item $\frac{|I_3^{n+1}|}{|I_2^{n}|} = s_3^*$
\item $\frac{|I_4^{n+1}|}{|I_2^{n}|} = s_4^*$
\item $\frac{|I_5^{n+1}|}{|I_2^{n}|} = s_5^*$}
\end{enumerate}
\end{rem}

\begin{rem}\label{rq5}
\ The invariant Cantor set of the map $g_{{s}^*} $ is next in the complexity to the invariant period tripling Cantor set of piece-wise affine map $f_{{s}^*}$  which is described in subsection \ref{p2}. But unlike the Cantor set of $f_{{s}^*}$, there are now five ratios at each scale.
\end{rem}

\noindent In subsection~\ref{extsn}, we constructed $C^{1+Lip}$ extension of piece-wise affine map $f_{s^*}$ to the $C^{1+Lip}$ unimodal map $\mathcal{F}_{s^*}.$ A similar construction leads the following result.

\begin{thm}\label{thm:ext5}
\ Let $\mathcal{G}_{s^*}$ be a $C^{1+Lip}$ extension of $g_{{s}^*}.$ Then $\mathcal{G}_{s^*}$ is a period quintupling infinitely renormalizable $C^{1+Lip}$ unimodal map with a quadratic tip such that $$ R \mathcal{G}_{s^*} = \mathcal{G}_{s^*} .$$
\end{thm}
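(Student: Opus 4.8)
The plan is to mirror, for the quintupling combinatorics, the three-step program already carried out for $\mathcal{F}_{s^*}$ in subsection~\ref{extsn}: first extend $g_{s^*}$ to a $C^1$ unimodal map, then upgrade the extension to $C^{1+Lip}$ with a uniform Lipschitz bound on the derivative, and finally observe that the fixed-point relation $Rg_{s^*}=g_{s^*}$ lifts to the extension. The scaling function here is the product map $G(x,y)=(G_1(x),G_2(y))$ with $G_1$ the affine contraction induced by $\tilde s_2$ in the $x$-direction (horizontal ratio $s_2^*$) and $G_2$ the affine contraction induced by $\tilde s_5$ in the $y$-direction (vertical ratio $s_5^*$). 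Exactly as before, I would fix finitely many graph pieces $K^i$ covering the affine pieces of $g_{s^*}$ over the base intervals $I_1^1,I_3^1,I_4^1,I_5^1$ together with the two $C^1$ bridging arcs that join them across the gaps, and then set $K=\bigcup_{m\ge 1}G^m(K^1\cup\cdots)$, so that $K$ is the graph of the desired extension $\mathcal{G}_{s^*}$.

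For the $C^1$ step I would verify continuous differentiability at each of the junction points $p_n$ (the images of the finitely many base-level junctions under iterates of $G$), which is automatic because the bridging arcs were chosen $C^1$ at the finite base level and $G$ is affine, so differentiability is preserved under $G^m$; additionally one checks that the slopes of $G^m(K^i)$ tend to zero as $m\to\infty$ so that the graph closes up with a horizontal tangent direction consistent with the quadratic tip at $c^*$. For the $C^{1+Lip}$ step the key computation is the self-similar recursion for the Lipschitz constant: writing $\mathcal{G}_{s^*}^{n+1}(\bar a)=1-s_5^*+s_5^*\,\mathcal{G}_{s^*}^{n}\!\bigl(1-\tfrac{\bar a-u_{c^*}^{3}(0)}{s_2^*}\bigr)$ along the vertical branch and differentiating, the Lipschitz constant of the derivative scales by the factor $s_5^*/(s_2^*)^2$, and by the lemma giving $(s_2^*)^2=s_5^*$ this factor equals $1$, yielding $\lambda_{n+1}\le\lambda_n\le\lambda_1$ and hence a uniform bound. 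I would remark that the analogous one-step identities hold on the other branches (the preimage branches using $g_{s^*}^{-1},g_{s^*}^{-2}$ and the remaining forward branch), each contributing a contraction factor governed by the ratio identities in the preceding remark, so the single relation $(s_2^*)^2=s_5^*$ is exactly what forces the whole family of branches to have non-increasing Lipschitz constants.

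Finally, since $g_{s^*}$ is an infinitely renormalizable piece-wise affine map and $\mathcal{G}_{s^*}$ agrees with $g_{s^*}$ on $D_{s^*}$ while being a genuine $C^{1+Lip}$ (but not $C^2$) extension, renormalizability of $\mathcal{G}_{s^*}$ follows because $R\mathcal{G}_{s^*}$ is an extension of $Rg_{s^*}=g_{s^*}$; iterating, $\mathcal{G}_{s^*}$ is infinitely renormalizable, and the self-similarity built into $K$ via $G$ gives $R\mathcal{G}_{s^*}=\mathcal{G}_{s^*}$. The quadratic tip is inherited from the base relation $g_{s^*}(y_n)=u_{c^*}(y_n)$ recorded in the earlier remark.

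I expect the main obstacle to be the $C^{1+Lip}$ step rather than the $C^1$ step: one must be careful that \emph{every} branch of the quintupling renormalization (there are four branches here, including two that involve preimages of $g_{s^*}$, in contrast to the two branches in the tripling case) produces a contraction factor that is at most $1$ under the single normalization $(s_2^*)^2=s_5^*$. Verifying that the preimage branches do not introduce an extra expansion — i.e.\ that the relevant vertical-to-horizontal-squared ratios on $I_3^n$ and $I_4^n$ are controlled by $s_5^*/(s_2^*)^2$ and not some larger quantity — is the delicate point, and it is precisely where the geometry encoded in the ratio identities $|I_i^{n+1}|/|I_2^n|=s_i^*$ must be invoked to keep the Lipschitz bound uniform across all branches.
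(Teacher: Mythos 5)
Your proposal is correct and takes essentially the same approach as the paper, which in fact gives no separate proof of this theorem: it simply invokes ``a similar construction'' to the tripling case of subsection~\ref{extsn}, namely the product box map with horizontal part $\tilde{s}_2$ and vertical part $\tilde{s}_5$, the Lipschitz recursion whose factor $s_5^*/(s_2^*)^2$ equals $1$ by the lemma $(s_2^*)^2=s_5^*$, and the observation that $R\mathcal{G}_{s^*}$ extends $Rg_{s^*}=g_{s^*}$.

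One clarification, since you single it out as the delicate point: the four branches of the quintupling renormalization operator (in particular the preimage branches built from $g_{s^*}^{-1}$ and $g_{s^*}^{-2}$ over $I_4^n$ and $I_3^n$) play no role in the Lipschitz estimate. The graph $K$ is self-similar under the \emph{single} affine map $G=(\tilde{s}_2,\tilde{s}_5)$: because $\tilde{s}_2(c^*)=c^*$, the commutation $u_{c^*}\circ\tilde{s}_2=\tilde{s}_5\circ u_{c^*}$ holds identically on $[0,1]$ exactly when $(s_2^*)^2=s_5^*$, so each affine piece of $g_{s^*}$ — over $I_1^{n+1}$, $I_3^{n+1}$, $I_4^{n+1}$, $I_5^{n+1}$ alike — together with the bridging arcs over the gaps (note there are four gaps at level one, not two) is the $G$-image of the corresponding piece one level up. Hence the single factor $s_5^*/(s_2^*)^2=1$ controls every piece of $K^{n+1}$ simultaneously, no branch-by-branch verification is needed, and the ratio identities $|I_i^{n+1}|/|I_2^n|=s_i^*$ are not what is invoked at this step. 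The preimage branches would enter only if one verified the fixed-point equation $R\mathcal{G}_{s^*}=\mathcal{G}_{s^*}$ directly, and even there the equation reduces to the same graph invariance, since $g_{s^*}^{4}:\hat{I}_{2,+}^{1}\to I_2^{1}$ is affine and identifies $\mathcal{G}_{s^*}^{5}|_{I_2^1}$ with $\tilde{s}_5^{-1}$-rescaled data.
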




\noindent As we discussed the topological entropy in subsection \ref{entropy} and the $\epsilon-$variation on scaling data in subsection \ref{evar}, in the similar way, the following results hold for period quintupling renormalization. 

\begin{thm} \label{prep52}
\ The period quintupling renormalization defined on $C^{1+Lip}$ unimodal maps has infinite entropy and it has a continuum of fixed points.
\end{thm}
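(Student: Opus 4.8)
The plan is to reproduce, for the five-fold combinatorics, the two constructions already carried out in the tripling case in subsections~\ref{entropy} and~\ref{evar}, using the quintupling fixed point $g_{s^*}$, its $C^{1+Lip}$ extension $\mathcal{G}_{s^*}$ from \cref{thm:ext5}, and the identity $(s_2^*)^2 = s_5^*$ established for the quintupling data.

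\textbf{Infinite entropy.} First I would fix the quintupling combinatorics $I_2^n \to I_5^n \to I_1^n \to I_3^n \to I_4^n \to I_2^n$ and, exactly as in subsection~\ref{entropy}, choose $n$ pairwise distinct $C^{1+Lip}$ maps $\psi_0,\ldots,\psi_{n-1}$ that extend $g_{s^*}$ over the complementary gaps, leaving the four affine legs untouched. For a sequence $\alpha = \{\alpha_m\}_{m\ge 1}$ in the full $n$-shift $\Sigma_n$ I would set $K(\alpha) = \bigcup_{m\ge 1} F^m(\mathrm{graph}\,\psi_{\alpha_m})$ and argue, word for word as in \cref{thm:ext5}, that $K(\alpha)$ is the graph of a $C^{1+Lip}$ unimodal map $g_\alpha$ with quadratic tip. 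The uniform Lipschitz bound on the derivatives is inherited from the estimate behind \cref{thm:ext5}, whose only structural input is that the vertical contraction $s_5^*$ dominates the squared horizontal contraction, i.e. $(s_2^*)^2 = s_5^*$. The map $\alpha \mapsto g_\alpha$ is injective, and the renormalization identity $R g_\alpha = g_{\sigma(\alpha)}$ follows from the quintupling combinatorics together with \cref{lem51}. Hence $R$ restricted to $\{g_\alpha : \alpha \in \Sigma_n\}$ is topologically conjugate to the full $n$-shift, so $h_{top}(R) \ge \ln n$; letting $n \to \infty$ yields infinite topological entropy.

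\textbf{Continuum of fixed points.} Next I would introduce an $\epsilon$-variation of the scaling quint-factor along the lines of subsection~\ref{evar}, perturbing a single coordinate (the natural choice being $s_5$, the quintupling analogue of $s_3$) to obtain perturbed scaling functions $s_i(c,\epsilon)$ and a perturbed renormalization map $\mathcal{R}(c,\epsilon)$ with $\mathcal{R}(c,1) = \mathcal{R}(c)$. Since the unperturbed fixed point $c^*$ found in the quintupling Proposition is expanding, for every $\epsilon$ close to $1$ the map $\mathcal{R}(\cdot,\epsilon)$ retains a unique expanding fixed point $c_\epsilon^*$ depending monotonically on $\epsilon$ (the analogue of \cref{ob1}), and the associated scaling data $s_\epsilon^*$ stays proper, i.e. remains in $T_5$ bounded away from $\partial T_5$. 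Because $\sigma(s_\epsilon^*) = s_\epsilon^*$, \cref{lem51} gives $R g_{s_\epsilon^*} = g_{s_\epsilon^*}$, and the extension scheme of \cref{thm:ext5} promotes $g_{s_\epsilon^*}$ to a $C^{1+Lip}$ fixed point $\mathcal{G}_{s_\epsilon^*}$. Distinct values of $\epsilon$ produce distinct critical points $c_\epsilon^*$, hence distinct maps, so the family $\{\mathcal{G}_{s_\epsilon^*}\}$ indexed by $\epsilon$ in an interval about $1$ is the desired continuum.

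\textbf{Main obstacle.} The conceptual steps are identical to the tripling case, so the real work is the five-interval bookkeeping: one must verify that the extension over the four affine legs and the single unimodal leg glues into a genuine $C^1$ (indeed $C^{1+Lip}$) unimodal map and that the orbit $I_2^n \to I_5^n \to I_1^n \to I_3^n \to I_4^n \to I_2^n$ gives $R g_\alpha = g_{\sigma(\alpha)}$. The only genuinely analytic point, as before, is the \emph{uniform} Lipschitz bound across the infinite gluing, and this rests entirely on the domination $(s_2^*)^2 = s_5^*$; once that identity is invoked, both the multi-symbol entropy construction and the $\epsilon$-perturbation persistence argument transfer verbatim from subsections~\ref{entropy} and~\ref{evar}.
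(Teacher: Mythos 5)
Your proposal is correct and takes essentially the same route as the paper: the paper gives no separate proof of this theorem, asserting only that the constructions of subsections~\ref{entropy} and~\ref{evar} carry over ``in the similar way'' to the quintupling combinatorics, which is exactly the transfer you spell out (a full $n$-shift embedded via $n$ distinct $C^{1+Lip}$ extensions of $g_{s^*}$, and an $\epsilon$-perturbation of the scaling data yielding a continuum of $C^{1+Lip}$ fixed points), with the correct key input $(s_2^*)^2 = s_5^*$ replacing $(s_2^*)^2 = s_3^*$. The only immaterial discrepancy is your identification of the perturbed coordinate: in the tripling case the paper's formulas actually scale $s_1 = u_c^3(0)$ by $\epsilon$ (the caption of Figure~\ref{fig:ChaoScale} notwithstanding), but which coordinate carries the parameter does not affect the persistence of the expanding fixed point, the properness of the data, or the extension argument.
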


\begin{thm}
\ There exists an infinitely renormalizable $C^{1+Lip}$ unimodal map ${k}$ with quadratic tip such that $\{c_n\}_{n  \geq 0}$ is dense in a  Cantor set, where $c_n$ is the critical point of $R^nk$ and $R$ is the period quintupling renormalization operator. 

\end{thm}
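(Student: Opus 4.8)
The plan is to transplant the horseshoe construction carried out in the period tripling case (the theorem immediately preceding Section~\ref{sec3}) to the five-branch combinatorics of Section~\ref{sec3}. The point of departure is the quintupling renormalization map $\mathcal{R}(c)$ of Eqn.~(\ref{36}), whose unique fixed point $c^* = 0.387226\ldots$ in $f_{d_2}$ is expanding, as seen in Figure~\ref{fig:gfix2}. First I would introduce the $\epsilon$-variation of the scaling data exactly as in Theorem~\ref{prep52}, producing a one-parameter family $\mathcal{R}(c,\epsilon)$ for which each slice $\mathcal{R}(\cdot,\epsilon)$ has a single expanding fixed point $c_\epsilon^*$ depending monotonically on $\epsilon$.

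Next I would select $\epsilon_0 > \epsilon_1 > \epsilon_2$ close to $1$, so that the three maps $\mathcal{R}(\cdot,\epsilon_i)$ have expanding fixed points $c_0^* < c_1^* < c_2^*$ with $\frac{\partial \mathcal{R}}{\partial c}(c_i^*,\epsilon_i) > 2$ for $i = 0,1,2$. Choosing sub-intervals $A_0 = [c_0^*, a_0]$, $A_1 = [a_1, b_1]$ and $A_2 = [a_2, c_2^*]$ on which $R_i(c) = \mathcal{R}(c,\epsilon_i)$ maps $A_i$ diffeomorphically over $[c_0^*, c_2^*] \supset A_i$ yields a horseshoe, whose invariant Cantor set is coded by the full $3$-shift $\Sigma_3$ through a map $c : \Sigma_3 \to [c_0^*, c_2^*]$ satisfying $c(\sigma(\alpha)) = \mathcal{R}(c(\alpha), \epsilon_{\alpha_0})$.

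For each $\alpha \in \Sigma_3$ this determines a proper scaling data $s : \mathbb{N} \to T_5$ whose $n$-th quint-factor is built from $c(\sigma^n \alpha)$ and $\epsilon_{\alpha_n}$, and hence a piece-wise affine quintupling infinitely renormalizable map $g_\alpha : D_s \to [0,1]$. I would then extend $g_\alpha$ to a unimodal map $k$ following the box-and-scaling-function scheme of subsection~\ref{extsn} (whose quintupling version underlies Theorem~\ref{thm:ext5}): construct the affine scaling maps $F^n$, pull the graph back to obtain the pieces $K^n$, and on each of the four gaps of generation $n$ glue in an extension $k_n^i$ drawn from a compact family of $C^{1+Lip}$ diffeomorphisms. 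This is legitimate because the endpoint values $u_{c_n}^j(z_{n-1})$ and the prescribed derivatives there vary within a compact set, since $c_n = c(\sigma^n\alpha) \in [c_0^*, c_2^*]$ stays in a compact neighborhood of $c^*$. The resulting $k$ is $C^1$, inherits the quadratic tip from $g_\alpha$, and satisfies $\mathrm{graph}(R^n k) \supset K^n$ with the critical point of $R^n k$ equal to $c(\sigma^n\alpha)$.

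The main obstacle will be the uniform Lipschitz estimate for the derivatives of the iterated pieces across all renormalization levels, that is, the quintupling counterpart of Lemma~\ref{HorL}. Just as in Eqn.~(\ref{eqlp}), the Lipschitz constant $\lambda_n$ of the $n$-th piece is governed by the ratio $\frac{|\hat{I}_2^n|}{|I_2^n|^2}$, so I must show this ratio is bounded above and below uniformly in $n$ and in $\alpha$. Using the identity $(s_2^*)^2 = s_5^*$ (the quintupling analogue of Lemma~\ref{lem24}) together with $c_n \in [c_0^*, c_2^*]$, one obtains $\frac{1}{\rho_0} \le \frac{|c - y_n|}{|I_2^n|} \le \rho_0$ and hence $\frac{1}{\rho} \le \frac{|\hat{I}_2^n|}{|I_2^n|^2} \le \rho$ exactly as in Lemma~\ref{HorL}, so that $\lambda_n \le P_0\, \rho$ for constants $P_0, \rho > 0$ independent of $n$. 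This places $k$ in $C^{1+Lip}$. Finally, picking $\alpha \in \Sigma_3$ whose forward $\sigma$-orbit is dense in $\Sigma_3$ makes $\{c_n\}_{n \ge 0} = \{c(\sigma^n\alpha)\}_{n \ge 0}$ dense in the horseshoe Cantor set, which completes the proof.
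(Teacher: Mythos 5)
Your proposal is correct and follows essentially the same route as the paper: the paper states this quintupling theorem without a separate proof, deferring to the tripling horseshoe construction of subsection~\ref{evar} ``in the similar way,'' and your argument is precisely that transplantation --- the $\epsilon$-variation producing three expanding fixed points, the horseshoe coding $c:\Sigma_3\to[c_0^*,c_2^*]$, the gap-gluing extension from a compact family, the Lipschitz control via the analogue of Lemma~\ref{HorL} using $(s_2^*)^2=s_5^*$, and the choice of $\alpha$ with dense shift orbit. You have in fact supplied more detail than the paper itself does for this statement.
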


\section{Conclusions}
 In this work, we showed that the period tripling and period quintupling renormalization  operators both have a fixed point corresponding to the given proper scaling data. Also, we notice that the geometry of the invariant Cantor set of the map $g_{{s}^*}$ is more complex than the geometry of the invariant Cantor set of $f_{{s}^*}.$
\; Furthermore, the piece-wise affine period tripling and quintupling renormalizable maps are extended to a $C^{1+Lip}$ unimodal map. Finally, we showed that the period tripling and period quintupling renormalizations defined on the space of $C^{1+Lip}$ unimodal maps have positive entropy. In fact, the topological entropy of renormalization operator $R$ is unbounded. We proved the existence of an infinitely renormalizable $C^{1+Lip}$ unimodal map $k$ with quadratic tip such that $\{c_n\}_{n  \geq 0}$ is dense in a  Cantor set. 
 This gives us the existence of continuum of fixed points of period tripling and period quintupling renormalizations. This shows non-rigidity of period tripling and quintupling renormalizations.

\section*{References}

\end{document}